\documentclass[12pt]{amsart}

\usepackage{lmodern}
\usepackage{amsthm,amsmath,amssymb,amsfonts,mathrsfs}
\usepackage{mathtools}
\mathtoolsset{showonlyrefs=true}

\usepackage{array}
\usepackage{booktabs}
\usepackage{microtype}
\usepackage[a4paper,left=2cm,right=2cm,top=2cm,bottom=2cm]{geometry}
\usepackage[sort]{cite}
\usepackage[colorlinks=true,linkcolor=blue,citecolor=blue,urlcolor=blue]{hyperref}
\allowdisplaybreaks

\newtheorem{theorem}{Theorem}[section]
\newtheorem{lemma}[theorem]{Lemma}
\newtheorem{proposition}[theorem]{Proposition}
\newtheorem{corollary}[theorem]{Corollary}

\newtheorem{hypothesis}[theorem]{Hypothesis}
\theoremstyle{definition}
\newtheorem{definition}[theorem]{Definition}
\newtheorem{example}[theorem]{Example}
\newtheorem{remark}[theorem]{Remark}

\newcommand{\F}{\mathbf F}
\newcommand{\A}{\mathcal A}
\newcommand{\Sq}{\operatorname{Sq}}
\newcommand{\Sqdown}{\operatorname{Sq}_{\downarrow}}
\newcommand{\Stab}{\operatorname{Stab}}
\newcommand{\Hit}{\mathcal H}
\newcommand{\DP}{\operatorname{DP}}
\newcommand{\DS}{\operatorname{DS}}
\newcommand{\SF}{\operatorname{SF}}
\newcommand{\tr}{\operatorname{tr}}

\newcommand{\Crel}{\mathcal C}
\newcommand{\Rel}{\mathcal R}
\newcommand{\Ksym}{K^{\Sigma}}
\newcommand{\OrbSum}{\mathscr S}
\newcommand{\Part}{\mathscr P}
\newcommand{\Comp}{\mathscr E}
\newcommand{\mmeasure}{\mathfrak m}
\newcommand{\eps}{\varepsilon}

\def\DD{D\kern-.7em\raise0.4ex\hbox{\char '55}\kern.33em}

\title[Symmetric Hit Problem in Four Variables]{Relative Duality and Structural Reductions\\ for the Symmetric Hit Problem in Four Variables}

\author{Ph\'uc V\~o \DD\d{\u a}ng}
\address{Department of Mathematics, FPT University, An Phu Thinh New Urban Area, Quy Nhon, Vietnam}
\email{dangphuc150488@gmail.com}
\thanks{ORCID: \url{https://orcid.org/0000-0002-6885-3996}}

\subjclass[2020]{55S10, 55R40, 13A50}
\keywords{Steenrod algebra, symmetric hit problem, modular invariant theory, relative duality, Kameko isomorphism, stabilizer parity}

\begin{document}

\begin{abstract}
The symmetric hit problem asks whether the orbit sum of a monomial that is hit in the polynomial algebra
\[
 P(n)=\F_2[x_1,\ldots,x_n]
\]
is hit in the invariant subalgebra
\[
 B(n)=P(n)^{\Sigma_n}\cong H^*(BO(n);\F_2).
\]
The problem is known for \(n\leq3\), whereas for \(n\geq4\) nontrivial stabilizers introduce additional equivariant constraints. Let
\(
\bar\iota_d:Q_d(B(n))\to Q_d(P(n))
\)
be induced by inclusion, and define the relative cohit space
\[
 \Crel_d(n)=\ker\bar\iota_d
 =\frac{B^d(n)\cap\Hit^d(P(n))}{\Hit^d(B(n))}.
\]
We prove the natural duality
\[
 \Crel_d(n)^*
 \cong
 \operatorname{coker}\bigl(K_d(n)\longrightarrow K_d(\DS(n))\bigr).
\]
Thus an obstruction represented by a symmetric polynomial that is already ordinary hit must be detected by a symmetric Steenrod-kernel functional modulo the restrictions of ordinary Steenrod-kernel functionals. We combine this duality with a stabilizer-compatible coset-parity formula and reduce the four-variable conjecture to a uniform local descent hypothesis for monomials with repeated exponents. We also prove an unconditional family. For every \(t\geq0\), the ordered monomial and the monomial symmetric function associated with
\[
 \lambda_t=(10\cdot2^t-1,10\cdot2^t-1,2^{t+2}-1,2^{t+1}-1)
\]
are hit in \(P(4)\) and \(B(4)\), respectively, although
\(
\omega(\lambda_t)>_l\omega_{\min}(|\lambda_t|).
\)
The degree-\(22\) identity is established by an exact Lucas--Cartan calculation and propagated by the ordinary and symmetric Kameko isomorphisms. Exact linear algebra over \(\F_2\) gives
\[
 \Crel_8(4)=\Crel_{12}(4)=\Crel_{14}(4)=\Crel_{22}(4)=0.
\]
Finally, we prove that the relative cohit space is preserved by every Kameko isomorphism. Consequently,
\[
 \Crel_{26\cdot2^t-4}(4)=0
\]
for all \(t\geq0\).
\end{abstract}

\maketitle

\section{Introduction}

\noindent\textbf{Topological motivation and historical context.}
The mod \(2\) cohomology of the real classifying space is
\[
 H^*(BO(n);\F_2)=\F_2[w_1,\ldots,w_n],
 \qquad |w_j|=j.
\]
After restriction to a maximal elementary abelian subgroup, the splitting principle identifies the Stiefel--Whitney class \(w_j\) with the elementary symmetric function in degree-one roots. Hence there is an isomorphism of unstable algebras
\[
 H^*(BO(n);\F_2)\cong
 \F_2[x_1,\ldots,x_n]^{\Sigma_n}=B(n),
 \qquad
 w_j\longmapsto e_j(x_1,\ldots,x_n),
\]
compatible with the Steenrod action and the Wu formulas \cite{MilnorStasheff,JanfadaWood2003,Singer2008}. Determining a minimal set of \(\A\)-generators for \(B(n)\) is therefore equivalent to determining which symmetric characteristic-class expressions lie in the image of positive Steenrod operations on lower-degree symmetric classes.

The ordinary Peterson hit problem concerns the cohit spaces
\[
 Q_d(P(n))=P^d(n)/\A^+P(n),
 \qquad
 P(n)=\F_2[x_1,\ldots,x_n].
\]
Wood proved the numerical vanishing criterion, Singer established the minimal-spike criterion, and Walker and Wood developed a systematic treatment in terms of blocks, splicing, Kameko maps, and the divided-power dual \cite{Wood1989,Singer1991,WalkerWoodI,WalkerWoodII}. We follow Walker and Wood in representing a monomial by a block whose rows are the reversed binary expansions of its exponents, and in writing its \(\omega\)-sequence as the sequence of block-column sums. In the invariant setting, Janfada and Wood proved the symmetric Peterson theorem and determined generators in the three-variable case \cite{JanfadaWood2002,JanfadaWood2003}. Janfada's related work includes an ordinary criterion in \(P(3)\), a study of instability conditions, a note on the symmetric hit problem, the distinct-exponent theorem, and non-hit criteria for symmetrized monomials in \(B(3)\) \cite{Janfada2008,Janfada2009Unstability,Janfada2009Sym,Janfada2011,JanfadaCriteriaB3}. Singer's analysis of symmetric functions as Steenrod modules, Pengelley and Williams' sparseness theorem, and Walker and Wood's dual symmetric algebra provide additional structural tools \cite{Singer2008,PengelleyWilliams2015,WalkerWoodII}.

For \(n=4\), the interaction between the Steenrod action and modular orbit theory is already substantially different from the cases \(n\leq3\). In characteristic \(2\), one cannot divide by \(|\Sigma_4|\), and the full transfer of a monomial vanishes whenever its stabilizer has even order. In particular, exponent multiplicity type \(2+2\) has stabilizer \(\Sigma_2\times\Sigma_2\). A local hit equation can be summed over source cosets only when its Steenrod preimages are fixed by the source stabilizer. If a target monomial has stabilizer \(K\) and the source stabilizer is \(H\), the target orbit occurs after coset summation with coefficient
\[
 [K:H\cap K]\pmod2.
\]
The obstruction is therefore equivariant as well as combinatorial.

A second distinction concerns dual detection. Walker and Wood's strongly spike-free modules in the local ordinary quotients \(Q_\omega(4)\) are dual to quotients of ordinary Steenrod kernels \cite[Chapter~30]{WalkerWoodII}. These modules organize the ordinary hit problem, but they do not by themselves measure the failure of a symmetric hit equation. If a symmetric residual term in an exact identity is already known to be ordinary hit, then every ordinary Steenrod-kernel functional annihilates it. The relevant quotient of functionals must therefore be relative to the inclusion \(B(n)\hookrightarrow P(n)\).

\medskip
\noindent\textbf{Main results.}
Let
\[
 \bar\iota_d:Q_d(B(n))\longrightarrow Q_d(P(n))
\]
be induced by inclusion. We define
\[
 \Crel_d(n)=\ker\bar\iota_d
 =\frac{B^d(n)\cap\Hit^d(P(n))}{\Hit^d(B(n))}.
\]
Thus \(\Crel_d(n)\) consists precisely of symmetric cohit classes represented by polynomials that are hit in \(P(n)\). Let \(K_d(n)\) be the ordinary divided-power Steenrod kernel, let
\[
 \Ksym_d(n)=K_d(\DS(n))\cong Q_d(B(n))^*,
\]
and let
\[
 \rho_d:K_d(n)\longrightarrow\Ksym_d(n)
\]
be restriction of functionals.

\begin{theorem}[Relative duality]\label{thm:intro-relative-duality}
For every \(n\geq1\) and \(d\geq0\), restriction induces a natural isomorphism
\[
 \Crel_d(n)^*
 \cong
 \frac{\Ksym_d(n)}{\rho_d(K_d(n))}.
\]
Equivalently, evaluation induces a natural perfect pairing
\[
 \Crel_d(n)\otimes
 \frac{\Ksym_d(n)}{\rho_d(K_d(n))}
 \longrightarrow\F_2,
 \qquad
 ([b],[\Theta])\longmapsto\Theta(b).
\]
\end{theorem}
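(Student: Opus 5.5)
This is finite-dimensional linear algebra in each degree, and I would prove it by dualizing the exact sequence that defines \(\Crel_d(n)\). The sequence is
\[
 0\longrightarrow \Crel_d(n)\longrightarrow Q_d(B(n))\xrightarrow{\ \bar\iota_d\ } Q_d(P(n)).
\]
All spaces are finite-dimensional over \(\F_2\), since \(P^d(n)\) is. So dualizing is exact and yields
\[
 Q_d(P(n))^*\xrightarrow{\ \bar\iota_d^*\ } Q_d(B(n))^*\longrightarrow \Crel_d(n)^*\longrightarrow 0.
\]
The second map is restriction of functionals to \(\Crel_d(n)\subseteq Q_d(B(n))\). Hence \(\Crel_d(n)^*\cong\operatorname{coker}\bar\iota_d^*\). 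This part is formal. The content lies in identifying the terms with the Steenrod kernels.

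Next I would use the standard identifications.
\begin{itemize}
 \item \(Q_d(P(n))^*\) is the space of linear functionals on \(P^d(n)\) that vanish on \(\Hit^d(P(n))\). Via the divided-power dual \(\DP(n)\), this is the kernel \(K_d(n)\) of the dual (downward) Steenrod action, as in Walker--Wood.
 \item \(Q_d(B(n))^*\) is the space of functionals on \(B^d(n)\) killing \(\Hit^d(B(n))\). This is \(K_d(\DS(n))=\Ksym_d(n)\), which the paper takes as the definition or cited identification.
\end{itemize}
The one check needed is naturality. Under these identifications, \(\bar\iota_d^*\) must become the restriction \(\rho_d\), sending \(\theta\in K_d(n)\) to \(\theta|_{B^d(n)}\). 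This holds because \(\bar\iota_d\) is induced by the inclusion \(B^d(n)\hookrightarrow P^d(n)\). The dual of an inclusion is restriction, and \(\theta\) kills \(\Hit(P(n))\supseteq \Hit(B(n))\), so \(\theta|_{B^d(n)}\) lies in \(\Ksym_d(n)\). Combining with the first paragraph gives \(\Crel_d(n)^*\cong \Ksym_d(n)/\rho_d(K_d(n))\).

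For the pairing, I would verify directly that \(([b],[\Theta])\mapsto\Theta(b)\) is well defined.
\begin{itemize}
 \item Changing \(b\) by an element of \(\Hit^d(B(n))\) does not change \(\Theta(b)\), since \(\Theta\in\Ksym_d(n)\) kills \(\Hit^d(B(n))\).
 \item Changing \(\Theta\) by \(\rho_d(\theta)\) adds \(\theta(b)\). This vanishes because \(b\in\Hit^d(P(n))\) and \(\theta\) kills ordinary hit elements.
\end{itemize}
Perfectness then follows from the isomorphism already established together with finite-dimensionality.

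The main obstacle is bookkeeping rather than ideas. One must confirm that the identification \(K_d(\DS(n))\cong Q_d(B(n))^*\) is compatible with the identification \(K_d(n)\cong Q_d(P(n))^*\) through the natural map \(\DP(n)\to\DS(n)\) dual to \(B(n)\hookrightarrow P(n)\). In other words, the "restriction" \(\rho_d\) must be exactly the dual of inclusion. If \(\DS(n)\) were instead defined as coinvariants or as a subspace of \(\DP(n)\), I would first write down that map explicitly and check that it intertwines the dual Steenrod actions. That check is a consequence of \(B(n)\) being an \(\A\)-submodule of \(P(n)\).
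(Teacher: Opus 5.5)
Your proposal is correct and follows essentially the same route as the paper. Both arguments identify \(Q_d(P(n))^*\cong K_d(n)\) and \(Q_d(B(n))^*\cong\Ksym_d(n)\), check that \(\bar\iota_d^*=\rho_d\), apply \(\operatorname{coker}T^*\cong(\ker T)^*\) to \(T=\bar\iota_d\), and verify well-definedness of the pairing in the same way. The only difference is that the paper proves \(\operatorname{im}T^*=(\ker T)^\perp\) by hand, while you get it from exactness of dualizing \(0\to\ker T\to V\to W\); your bookkeeping concern also disappears because the paper defines \(\DS_d(n)=\operatorname{Hom}_{\F_2}(B^d(n),\F_2)\), so \(\rho_d\) is literally restriction of functionals.
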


The theorem identifies the exact functional obstruction: a symmetric Steenrod-kernel functional detects a relative class only through its coset modulo functionals extending from the ordinary divided-power kernel. In particular, residual Cartan summands in a local binary combinatorial identity are tested against the finite-dimensional quotient \(\Ksym_d(n)/\rho_d(K_d(n))\), not against \(K_d(n)\).

Walker and Wood's symmetric lower-spike theorem supplies the induction base \cite[Proposition~25.1.5]{WalkerWoodII}, while Janfada's theorem applies when the exponent stabilizer is trivial \cite{Janfada2011}. For nontrivial stabilizer configurations, Section~\ref{sec:relative-descent} formulates a uniform local descent hypothesis requiring stabilizer-compatible square-preimage equations and vanishing of the residual relative class.

\begin{theorem}[Conditional four-variable symmetric hit theorem]\label{thm:intro-conditional}
Assume Hypothesis~\ref{hyp:relative-DE4} for every degree \(d\) with \(\mu(d)\leq4\). Then, for every monomial \(x^a\in P(4)\),
\[
 x^a\in\Hit(P(4))
 \quad\Longrightarrow\quad
 \sigma(x^a)\in\Hit(B(4)).
\]
Thus Janfada's symmetric hit conjecture holds in four variables under the stated relative descent hypothesis.
\end{theorem}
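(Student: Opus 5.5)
We will argue by induction, first on the degree \(d=|a|\) and then, within a fixed degree, on the \(\omega\)-sequence of \(a\) in the left lexicographic order \(<_l\). The conclusion depends only on the orbit of \(a\), because \(\sigma(x^a)\) is the orbit sum. We may therefore take \(a\) ordered, \(a_1\geq a_2\geq a_3\geq a_4\).

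\textbf{Reductions.} We first remove the cases handled by earlier results.
\begin{enumerate}
\item If \(\mu(d)>4\), then \(Q_d(B(4))=0\) by the symmetric Peterson theorem of Janfada--Wood, and there is nothing to prove. So we may assume \(\mu(d)\leq 4\), which is where Hypothesis~\ref{hyp:relative-DE4} is available.
\item If all exponents are distinct, the stabilizer of \(x^a\) is trivial and Janfada's distinct-exponent theorem \cite{Janfada2011} gives the conclusion directly.
\item If \(\omega(a)\) is strictly below the minimal spike sequence, Walker--Wood's symmetric lower-spike theorem \cite[Proposition~25.1.5]{WalkerWoodII} shows that \(\sigma(x^a)\) is hit in \(B(4)\). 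This is the base of the \(\omega\)-induction.
\item If \(x^a\) is hit but some exponent is even, or more generally \(a\) is not of Kameko type, we pass through the Kameko maps. The ordinary and symmetric Kameko isomorphisms are compatible with \(\sigma\), so the problem reduces to lower degree, where the degree induction applies.
\end{enumerate}

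\textbf{Main case.} What remains is a monomial with repeated exponents, stabilizer \(K\in\{\Sigma_2,\ \Sigma_2\times\Sigma_2,\ \Sigma_3,\ \Sigma_4\}\), and \(\omega(a)\) at or above the minimal spike. Here we apply Hypothesis~\ref{hyp:relative-DE4}. It supplies a local hit equation \(x^a=\sum \Sq^{i}(y_i)+(\text{terms with smaller }\omega)\) whose square-preimages \(y_i\) are fixed by a source stabilizer \(H\).

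We then sum this equation over the cosets of \(H\) in \(\Sigma_4\). By the coset-parity formula, the target orbit of \(x^a\) appears with coefficient \([K:H\cap K]\pmod 2\), and the hypothesis is exactly what guarantees this coefficient is \(1\). Each Steenrod term becomes \(\Sq^i\) of a symmetric polynomial, so it lies in \(\Hit(B(4))\).

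The remaining terms are orbit sums \(\sigma(x^b)\) with \(\omega(b)<_l\omega(a)\), together with a residual symmetric polynomial \(R\).
\begin{itemize}
\item Each \(\sigma(x^b)\) with \(x^b\) hit is symmetric-hit by the \(\omega\)-induction.
\item Any non-hit \(x^b\) must cancel in the ordinary identity, so the corresponding non-hit contributions assemble into \(R\).
\end{itemize}
Since everything else in the ordinary identity is hit, \(R\) lies in \(B^d(4)\cap\Hit^d(P(4))\), so it defines a class in \(\Crel_d(4)\). The second clause of the hypothesis says that this residual relative class vanishes. By Theorem~\ref{thm:intro-relative-duality}, this is equivalent to every functional in \(\Ksym_d(4)/\rho_d(K_d(4))\) annihilating \(R\), which gives \(R\in\Hit(B(4))\). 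Hence \(\sigma(x^a)\in\Hit(B(4))\).

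\textbf{Expected obstacle.} The delicate step is the bookkeeping in the coset summation. We must check two things:
\begin{itemize}
\item the lower-\(\omega\) terms produced after summation are genuine orbit sums, with parity coefficients that may be \(0\) or \(1\) but never introduce terms outside the induction range;
\item the terms not covered by induction are precisely those collected into the relative residual.
\end{itemize}
To handle this, I would first organize the summation by target stabilizer type and then verify that \(<_l\) is preserved under the action of \(\Sigma_4\) on \(\omega\)-sequences. The latter is immediate, since \(\omega\) is permutation invariant.
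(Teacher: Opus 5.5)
\textbf{The induction on $\omega$ does not close.} Your overall architecture is the paper's: the symmetric Peterson theorem for $\mu(d)>4$, the lower-spike base, Janfada for trivial stabilizers, and Hypothesis~\ref{hyp:relative-DE4} plus relative duality for the residual. The gap is that you induct on the wrong parameter.

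Hypothesis~\ref{hyp:relative-DE4} does not give lower terms of strictly smaller $\omega$. It gives $p_<\in W^d_{\prec\mmeasure}$, that is, ordinary-hit monomials $x^{G}$ with $\mmeasure(G)\prec\mmeasure(F)$ for the refined measure $\mmeasure=(\omega,\alpha^*,\lambda)$. Such a $G$ can have $\omega(G)=\omega(F)$ and be lower only in $\alpha^*$, or only in the exponent partition $\lambda$. Example~\ref{ex:measure-tiebreak} is exactly this: $(6,5,2,1)$ lies below $(6,6,1,1)$ with the same weight $(2,2,2)$ and the same $\alpha^*$.

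Your induction hypothesis (smaller $\omega$ in degree $d$, or smaller degree) says nothing about these same-weight orbits. You also cannot push them into the residual $R$. The orthogonality clause \eqref{eq:relative-orthogonality} is asserted only for $r_F$. The relative class of $r_F+\sigma(G)$ vanishes only if $[\sigma(G)]_{\mathrm{rel}}=0$, which is the very statement still to be proved for $G$.

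The repair is the paper's: fix $d$ and induct on $\mmeasure$ itself. Since $\lambda$ determines the orbit and $\omega,\alpha^*$ are functions of $\lambda$, this is a total order on the finite set of degree-$d$ orbit types. Every summand of $\OrbSum(p_<)$ then falls under the induction hypothesis.

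\textbf{Step 4 (Kameko) is unjustified as written, but you do not need it.} The isomorphisms of Proposition~\ref{prop:kameko-isomorphisms} exist only when $\mu(d)=4$. Even then, a monomial with an even exponent is not of the form $c_4f^2$, so nothing ``reduces to lower degree''; for $\mu(d)<4$ no reduction is available at all. Hypothesis~\ref{hyp:relative-DE4} already covers every block with nontrivial stabilizer at or above the minimal spike, regardless of exponent parity. So you can delete step 4 and the outer induction on degree.

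\textbf{The coset summation is already inside the hypothesis.} You need not redo it, and your assignment of $H$ and $K$ to source and target is muddled relative to Lemma~\ref{lem:coset-parity}. The hypothesis hands you the exact symmetric identity \eqref{eq:relative-DE4} with $\sigma(F)$ appearing with coefficient $1$. The ``expected obstacle'' you describe is therefore part of what is assumed, not something the proof of the theorem must check.
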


We also establish an unconditional family outside the range decided by the symmetric lower-spike theorem. Put
\[
 \lambda_t=(10\cdot2^t-1,10\cdot2^t-1,2^{t+2}-1,2^{t+1}-1),
 \qquad
 d_t=|\lambda_t|=26\cdot2^t-4.
\]
The exponent pattern has stabilizer \(\Sigma_2\) and its weight sequence is strictly larger than the weight sequence of a minimal spike in the same degree.

\begin{theorem}[A non-lower-spike Kameko tower]\label{thm:intro-kameko-tower}
For every \(t\geq0\), the ordered monomial
\[
 x^{\lambda_t}
 =x_1^{10\cdot2^t-1}x_2^{10\cdot2^t-1}
  x_3^{2^{t+2}-1}x_4^{2^{t+1}-1}
\]
is hit in \(P(4)\), and the monomial symmetric function \(m_{\lambda_t}\) is hit in \(B(4)\). Moreover,
\[
 \omega(\lambda_t)>_l\omega_{\min}(d_t)
\]
for every \(t\geq0\).
\end{theorem}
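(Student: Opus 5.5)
The plan has three parts: settle the base case \(t=0\) by explicit computation, propagate it up the tower by the Kameko maps, and check the \(\omega\)-inequality by hand from binary expansions. At \(t=0\) we have \(\lambda_0=(9,9,3,1)\) and \(d_0=22\).

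\textbf{Base case.} We will write \(x_1^9x_2^9x_3^3x_4\) explicitly as \(\sum_i \Sq^{2^i}(f_i)\) with \(f_i\in P^{22-2^i}(4)\). To find the \(f_i\) we expand candidate preimages by the Cartan formula, using \(\Sq^k(x^m)=\binom{m}{k}x^{m+k}\) with binomials read off by Lucas' theorem. The natural candidates are \(\Sq^1\) and \(\Sq^2\) applied to monomials with even exponents in the first two variables, for instance \(x_1^8x_2^9x_3^3x_4\) and \(x_1^9x_2^8x_3^3x_4\), followed by \(\chi\)-trick corrections. This reduces to a finite linear system over \(\F_2\), which we solve.

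For the symmetric statement we need \(m_{(9,9,3,1)}\in\Hit(B(4))\). Its stabilizer is \(\Sigma_2\), so the naive transfer of an ordinary identity would only give \(2m_\lambda=0\). Instead we symmetrize over coset representatives of \(\Sigma_4/\Sigma_2\). For this to work, the preimages \(f_i\) must be chosen \(\Sigma_2\)-invariant (swapping \(x_1,x_2\)), for example by using \(x_1^8x_2^8(\dots)\) or sums \(x_1^8x_2^9+x_1^9x_2^8\). Then \(\sum_{g\in\Sigma_4/\Sigma_2} g f_i\) is symmetric, and the target orbit appears with coefficient given by the coset-parity formula \([K:H\cap K]\bmod 2\). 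Every other target monomial that appears must cancel in pairs or have even parity. If no invariant preimage system exists, the fallback is to solve directly in \(B^{22}(4)\), expressing \(m_\lambda\) in terms of \(\Sq^k\) of monomial symmetric functions of lower degree.

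\textbf{Induction in \(t\).} Observe that \(\lambda_{t+1}=2\lambda_t+(1,1,1,1)\), i.e. \(\lambda_{t+1}\) is obtained from \(\lambda_t\) by doubling and adding one. Hence
\[
x^{\lambda_{t+1}}=x_1x_2x_3x_4\,(x^{\lambda_t})^2 .
\]
The relevant degree is \(d_{t+1}=2d_t+4\), with \(\mu(d_{t+1})=4\). We will check that \(\mu(d_t)=4\) for all \(t\); for \(d_0=22\) the shortest decomposition is \(22=15+3+3+1\). In this situation the Kameko map \(\widetilde{Sq}^0_*\colon Q_{2d+4}(P(4))\to Q_d(P(4))\) is an isomorphism. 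Since it sends \([x^{\lambda_{t+1}}]\) to \([x^{\lambda_t}]\), hitness passes upward. The same argument applies to the symmetric Kameko isomorphism on \(Q(B(4))\), because \(m_{\lambda_{t+1}}=e_4\,m_{\lambda_t}^2\) and the map is compatible with this. We take both Kameko isomorphisms as established earlier in the paper.

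\textbf{The inequality.} Write each exponent in binary. The entries \(2^{t+2}-1\) and \(2^{t+1}-1\) are strings of ones. The entry \(10\cdot2^t-1\) has binary expansion \(1001\) followed by \(t\) ones. From this we read off the column sums:
\[
\omega(\lambda_t)=(4,\dots,4,\,4,\,2,\,1,\,1,\,2)\quad\text{with } t \text{ leading 4's}.
\]
We then compare with the minimal spike of degree \(d_t=(2^{t+4}-1)+(2^{t+2}-1)+(2^{t+2}-1)+(2^{t+1}-1)\)-type. At \(t=0\) this is \(15+3+3+1\), with \(\omega_{\min}=(4,3,1,1)\). Comparing lexicographically, \((4,2,\dots)\) against \((4,3,\dots)\) after the common prefix gives the claimed relation. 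We must be careful with the direction of the order \(>_l\) as the paper defines it, and verify that the two sequences first differ at position \(t+2\).

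\textbf{Main obstacle.} The hard step is the explicit degree-\(22\) identity in \(B(4)\) with \(\Sigma_2\)-compatible preimages. We will attempt it first via \(\Sq^1\) and \(\Sq^2\) applied to invariant sums, and resort to a direct linear solve in the \(\F_2\)-space \(B^{22}(4)\) if that fails.
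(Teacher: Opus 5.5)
\textbf{The $\omega$-inequality.} This is where your proposal breaks. Both sequences you compare are wrong, and the comparison you sketch would give $<_l$, the opposite of the claim.

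Since $10\cdot2^t-1=2^{t+3}+2^{t+1}-1$, its binary expansion has ones exactly in the places of $2^0,\dots,2^t$ and $2^{t+3}$. Hence $\omega(\lambda_t)=(4,\dots,4,1,0,2)$ with $t+1$ fours. For example $\omega(9,9,3,1)=(4,1,0,2)$, whereas your $(4,2,1,1,2)$ has degree $52$, not $22$.

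On the other side, $22=15+7$, so $\mu(22)=2$, not $4$. The minimal spike is $(15,7,0,0)$ with $\omega_{\min}(22)=(2,2,2,1)$. In general it is $(2^{t+4}-1,2^{t+3}-1,2^t-1,2^t-1)$, with $\omega_{\min}(d_t)=(4,\dots,4,2,2,2,1)$ ($t$ fours).

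Your spike $(2^{t+4}-1,2^{t+2}-1,2^{t+2}-1,2^{t+1}-1)$ has weight $(4,\dots,4,3,1,1)$ with $t+1$ fours, and it is not minimal. Against it, even the correct $\omega(\lambda_t)$ would be smaller, since it has $1<3$ in entry $t+2$. The theorem's inequality holds only because the true $\omega_{\min}(d_t)$ has a $2$ in entry $t+1$, where $\omega(\lambda_t)$ has a $4$.

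\textbf{The Kameko step.} The propagation itself is fine; your down isomorphism is equivalent to the paper's up map $\upsilon$. It only needs $\mu(d_t)=4$ for $t\geq1$, and you still owe the lower bound. Because $d_t$ is even, $\mu(d_t)$ is even. Moreover $\alpha(d_t+2)=t+2\geq3$ excludes $\mu(d_t)\leq2$ via $\mu(d)\leq k\Leftrightarrow\alpha(d+k)\leq k$.

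\textbf{The base case.} This carries all the hit content of the theorem, and it is not proved. You describe a linear system but exhibit no identity, and the candidates you name cannot work. Since $\binom{8}{1}=0$, neither $\Sq^1(x_1^8x_2^9x_3^3x_4)$ nor $\Sq^2(x_1^8x_2^8x_3^3x_4)$ contains $x_1^9x_2^9x_3^3x_4$. By Lucas' theorem, an exponent $9$ arises only from source exponents $9$, $7$, $5$, with increments $0$, $2$, $4$.

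The paper settles the ordinary base by the Silverman--Singer criterion, which is the $\chi$-trick you allude to in packaged form. One writes $x_1^9x_2^9x_3^3x_4=(x_1x_2x_3^3x_4)(x_1x_2)^{8}$ and checks $6<(2^3-1)\mu(2)=14$.

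The symmetric base is the explicit identity $m_{9,9,3,1}=\Sq^8(m_{5,5,3,1})+\Sq^1(m_{10,6,4,1})$. Here both exponents $5$ receive the increment $4$. The only other odd-multiplicity orbit, $m_{10,6,4,2}$, is cancelled by the $\Sq^1$ term.

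Your fallback of a direct solve in $B^{22}(4)$ could in principle find such an identity. Until one is written down, together with an argument for the ordinary statement, neither base-case hit assertion is established.
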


The base case is the exact degree-\(22\) identity
\[
 m_{9,9,3,1}
 =\Sq^8(m_{5,5,3,1})+\Sq^1(m_{10,6,4,1}).
\]
Section~\ref{sec:tower} proves this identity by a complete Lucas--Cartan enumeration and then applies the ordinary and symmetric Kameko isomorphisms.

Exact matrix calculations give the following relative vanishing result.

\begin{theorem}[Test-degree relative vanishing]\label{thm:intro-low-degree}
In four variables,
\[
 \Crel_8(4)=\Crel_{12}(4)=\Crel_{14}(4)=\Crel_{22}(4)=0.
\]
More precisely,
\[
\begin{array}{c|c|c|c|c}
 d&\dim B^d(4)&\dim\Hit^d(B(4))&\dim Q_d(B(4))&\dim\Crel_d(4)\\
\hline
 8&15&11&4&0\\
 12&34&32&2&0\\
 14&47&43&4&0\\
 22&136&129&7&0.
\end{array}
\]
Consequently, every symmetric polynomial that is ordinary hit is symmetrically hit in each of these degrees.
\end{theorem}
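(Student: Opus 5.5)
The proof is a finite linear-algebra computation over \(\F_2\) in each of the four degrees, organized around the monomial symmetric basis. In each degree \(d\in\{8,12,14,22\}\), \(B^d(4)\) has the basis \(\{m_\lambda\}\), where \(\lambda\) runs over partitions of \(d\) with at most four parts (zeros allowed). Counting these partitions gives \(\dim B^d(4)=15,34,47,136\), which checks the first column of the table.

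\textbf{Symmetric hit space.} Since \(\A^+\) is generated by the \(\Sq^{2^k}\), the space \(\Hit^d(B(4))\) is spanned by the elements \(\Sq^{2^k}(m_\mu)\) with \(2^k\le |\mu|=d-2^k\). Instability lets us drop the cases \(2^k>|\mu|\). Each \(\Sq^{2^k}m_\mu\) is expanded in the \(m_\lambda\) basis by the Cartan formula together with Lucas' theorem, \(\Sq^i x^a=\binom ai x^{a+i}\). For the coefficient of \(m_\lambda\) it is enough to read off the coefficient of the single ordered monomial \(x^\lambda\) in \(\Sq^{2^k}(m_\mu)\); this avoids any problem with stabilizer parities, because the result is symmetric.

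Row-reducing the resulting matrix over \(\F_2\) gives \(\dim\Hit^d(B(4))\). Subtracting from \(\dim B^d(4)\) gives \(\dim Q_d(B(4))=4,2,4,7\).

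\textbf{Relative space.} Next we compute \(\dim\bigl(B^d(4)\cap\Hit^d(P(4))\bigr)\). Two routes are available.

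The first route works directly in \(P(4)\). Form the matrix of all \(\Sq^{2^k}x^b\) with \(|b|=d-2^k\) in the monomial basis of \(P^d(4)\), and row-reduce it to a normal form modulo \(\Hit^d(P(4))\). Then \(\Crel_d(4)=0\) is equivalent to the following statement: the images of a basis of \(Q_d(B(4))\) (say, chosen non-pivot representatives \(m_\lambda\)) are linearly independent in \(Q_d(P(4))\), i.e. \(\bar\iota_d\) is injective. It suffices to check that the rank of the matrix stacking \(\Hit^d(P(4))\) with these representatives equals \(\operatorname{rank}\Hit^d(P(4))+\dim Q_d(B(4))\).

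The second route, preferable for \(d=22\), uses Theorem~\ref{thm:intro-relative-duality}. Compute \(K_d(4)\) as the common kernel of the dual operations \(\Sqdown^{2^k}\) on divided powers. Restrict it to the symmetric divided powers, and check that the restriction \(\rho_d\) is surjective onto \(\Ksym_d(4)\), whose dimension equals \(\dim Q_d(B(4))\). Surjectivity can be certified by exhibiting \(\dim Q_d(B(4))\) explicit kernel functionals, for instance duals of spike-type or known \(Q_d(P(4))\) basis monomials, whose restrictions to the chosen representatives \(m_\lambda\) form an invertible matrix. This route is cheap: one only has to evaluate a \(7\times7\) matrix for \(d=22\), using the known bases of \(Q_d(P(4))\) (Sum's computation) to supply the functionals.

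\textbf{Main obstacle.} The hardest part is the degree-\(22\) bookkeeping. Here \(P^{22}(4)\) has \(\binom{25}{3}=2300\) monomials, so the ordinary hit matrix is large, and an error in a Cartan or Lucas coefficient would corrupt the ranks. I would therefore compute the ranks by machine. I would also cross-check them in two ways: first, against the identity \(m_{9,9,3,1}=\Sq^8(m_{5,5,3,1})+\Sq^1(m_{10,6,4,1})\), which must lie in the symmetric hit space; second, by confirming that the dual certificate from the second route has full rank \(7\). Once all four degrees pass, the stated dimensions follow, together with \(\dim\Crel_d(4)=0\). The last sentence of the statement is then just the definition of \(\Crel_d(4)\).
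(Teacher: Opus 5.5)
Your first route is exactly the paper's proof. The paper forms the symmetric hit matrix $S_d$, the ordinary hit matrix $O_d$ and the inclusion matrix $I_d$, and reads off $\dim\Crel_d(4)=\dim Q_d(B(4))-\bigl(\operatorname{rank}[O_d\mid I_d]-\operatorname{rank}O_d\bigr)=0$ by exact Gaussian elimination over $\F_2$. It uses all $\Sq^r$ and all $m_\lambda$ as columns rather than only $\Sq^{2^k}$ and non-pivot representatives, which gives the same ranks.

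Your dual second route is a valid certificate but is not genuinely cheaper. For non-spike admissible monomials, the dual basis functionals in $K_{22}(4)$ are not the coordinate functionals $v^{(b)}$. So evaluating them on the $m_\lambda$ already requires reducing those $m_\lambda$ modulo $\Hit^{22}(P(4))$.
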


The Kameko isomorphisms also preserve the relative kernel of inclusion.

\begin{corollary}[Relative vanishing along the Kameko tower]\label{cor:intro-relative-kameko}
For every \(t\geq0\),
\[
 \Crel_{26\cdot2^t-4}(4)=0.
\]
Equivalently, the restriction map
\[
 \rho_{26\cdot2^t-4}:K_{26\cdot2^t-4}(4)
 \longrightarrow\Ksym_{26\cdot2^t-4}(4)
\]
is surjective.
\end{corollary}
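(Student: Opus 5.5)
The plan is to combine the base case $\Crel_{22}(4)=0$ from Theorem~\ref{thm:intro-low-degree} with the claim, stated in the abstract, that every Kameko isomorphism preserves the relative cohit space. The key degree relation is
\[
 d_{t+1}=26\cdot2^{t+1}-4=2d_t+4=2d_t+n \qquad (n=4),
\]
which is exactly the degree shift of the Kameko map $\phi(x)=x_1\cdots x_4\,x^2$, sending $P^{d_t}(4)$ to $P^{2d_t+4}(4)$. The ordinary Kameko isomorphism $Q_{2d+4}(P(4))\cong Q_d(P(4))$ applies when $\mu(2d+4)=4$. Its symmetric analogue, with $\phi(b)=e_4\,b^2$, gives $Q_{2d+4}(B(4))\cong Q_d(B(4))$, and both are already used in Section~\ref{sec:tower} for Theorem~\ref{thm:intro-kameko-tower}. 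So we first check that $\mu(26\cdot2^t-4)=4$ for all $t\ge1$. Indeed $26\cdot2^t-4=(2^{t+4}-1)+(2^{t+3}-1)+(2^{t+1}-1)+(2^{t+1}-1)$, and fewer than four terms is impossible because $\alpha(d+k)>k$ for $k\le3$.

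Next we show that the relative kernel is Kameko-invariant. The Kameko map commutes with inclusion: $\phi(b)=e_4b^2$ is just $x_1x_2x_3x_4\,b^2$ computed in $P(4)$. We also have the dual Kameko maps $\tilde\kappa$, given by $x^{2a+1}\mapsto x^a$ on odd monomials and $0$ otherwise. These are inverse to $\phi$ on cohit spaces and preserve symmetric elements, since they commute with the $\Sigma_4$ action. This gives a commutative square
\[
\begin{array}{ccc}
Q_{d}(B(4)) & \xrightarrow{\ \bar\iota_d\ } & Q_{d}(P(4))\\
\downarrow{\scriptstyle\phi} & & \downarrow{\scriptstyle\phi}\\
Q_{2d+4}(B(4)) & \xrightarrow{\ \bar\iota_{2d+4}\ } & Q_{2d+4}(P(4))
\end{array}
\]
with both vertical arrows isomorphisms when $\mu(2d+4)=4$. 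Since the vertical maps are isomorphisms, $\phi$ restricts to an isomorphism $\ker\bar\iota_d\cong\ker\bar\iota_{2d+4}$, that is, $\Crel_d(4)\cong\Crel_{2d+4}(4)$.

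The main obstacle is well-definedness and commutativity at the level of cohit classes. We must know that $\phi$ sends $\Hit(B(4))$ into $\Hit(B(4))$ modulo hits, and likewise in $P(4)$. The standard argument uses $\Sq^{2k}(e_4y^2)\equiv e_4(\Sq^ky)^2$ modulo hits, together with the vanishing of the odd squares' contribution, via $\chi$-trick and Cartan. In the symmetric case this works because $e_4$ and squaring are symmetric, so every term stays in $B(4)$. If the symmetric Kameko isomorphism is only available in the direction $\tilde\kappa$, I would instead run the square with $\tilde\kappa$. Its compatibility with inclusion is immediate, because it is defined monomialwise and $\Sigma_4$-equivariantly.

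Induction on $t$ from $\Crel_{22}(4)=0$ then gives $\Crel_{d_t}(4)=0$ for all $t$. The equivalent formulation follows from Theorem~\ref{thm:intro-relative-duality}: vanishing of $\Crel_{d_t}(4)^*$ means $\Ksym_{d_t}(4)/\rho_{d_t}(K_{d_t}(4))=0$, so $\rho_{d_t}$ is surjective.
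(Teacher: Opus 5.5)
Your proof is essentially the paper's: Proposition~\ref{prop:relative-kameko} is exactly your commutative square of the ordinary and symmetric Kameko isomorphisms with the inclusion, followed by induction from \(\Crel_{22}(4)=0\) via \(d_{t+1}=2d_t+4\), with relative duality giving the surjectivity of \(\rho_{d_t}\). One arithmetic slip needs fixing: your four spike exponents sum to \(28\cdot2^t-4\), not \(26\cdot2^t-4\). The correct decomposition (Lemma~\ref{lem:tower-numerics}) is \((2^{t+4}-1)+(2^{t+3}-1)+(2^t-1)+(2^t-1)\); alternatively, \(\alpha(d_t+4)=\alpha(26\cdot2^t)=3\leq4\) already gives \(\mu(d_t)\leq4\).
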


\medskip
\noindent\textbf{Organization.}
Section~\ref{sec:prelim} fixes the Steenrod, orbit-sum, block, spike, and divided-power conventions. Section~\ref{sec:relative-dual} proves relative duality and the relative detection criterion. Section~\ref{sec:relative-descent} introduces source-first single-square and multi-square preimage constructions and states the local descent hypothesis. Section~\ref{sec:parity} proves the stabilizer-parity formula. Section~\ref{sec:induction} proves the conditional four-variable theorem by induction on a refined orbit measure. Section~\ref{sec:tower} establishes the degree-\(22\) identity, the ordinary and symmetric Kameko tower, and relative Kameko stability. Section~\ref{sec:computational-evidence} gives the exact rank calculations in degrees \(8\), \(12\), \(14\), and \(22\), and derives Corollary~\ref{cor:intro-relative-kameko}. Finally, Section~\ref{sec:conclusion} records the resulting structural reduction and the remaining local problem.

\section{Algebraic and combinatorial foundations}\label{sec:prelim}

The results announced in the introduction use three interacting structures: the ordinary Steenrod module \(P(n)\), its symmetric submodule \(B(n)\), and filtrations indexed by binary weight sequences. This section fixes the conventions for the Steenrod action, orbit sums, blocks, minimal spikes, and the divided-power dual. These conventions will be used throughout the relative obstruction theory and the subsequent induction.

\begin{definition}[The Steenrod action]\label{def:steenrod-action}
Let \(\F_2=\{0,1\}\), let \(\A\) be the mod \(2\) Steenrod algebra, and let \(\A^+\) be its augmentation ideal. For \(n\geq1\), set
\[
 P(n)=\F_2[x_1,\ldots,x_n],
 \qquad |x_i|=1.
\]
The total Steenrod square is the algebra endomorphism determined by
\[
 \Sq(x_i)=x_i+x_i^2.
\]
Its homogeneous component of degree \(r\) is denoted \(\Sq^r\). Thus
\[
 \Sq^0(x_i)=x_i,
 \qquad
 \Sq^1(x_i)=x_i^2,
 \qquad
 \Sq^r(x_i)=0\quad(r>1),
\]
and the Cartan formula is
\[
 \Sq^r(fg)=\sum_{i+j=r}\Sq^i(f)\Sq^j(g).
\]
Write \(\mathbf N=\{0,1,2,\ldots\}\). For an exponent vector \(a=(a_1,\ldots,a_n)\in\mathbf N^n\), write
\[
 x^a=x_1^{a_1}\cdots x_n^{a_n},
 \qquad |a|=a_1+\cdots+a_n.
\]
Repeated application of the Cartan formula gives
\[
 \Sq^r(x^a)=
 \sum_{r_1+\cdots+r_n=r}
 \left(\prod_{i=1}^n\binom{a_i}{r_i}\right)
 x_1^{a_1+r_1}\cdots x_n^{a_n+r_n},
 \tag{2.1}\label{eq:cartan-monomial}
\]
where every binomial coefficient is reduced modulo \(2\).
\end{definition}

\begin{example}\label{ex:sq-monomial}
In \(P(4)\), formula~\eqref{eq:cartan-monomial} gives
\[
 \Sq^1(x_1^5x_2^6x_3x_4)
 =x_1^6x_2^6x_3x_4
  +x_1^5x_2^6x_3^2x_4
  +x_1^5x_2^6x_3x_4^2.
\]
Indeed, \(5\), \(1\), and \(1\) are odd, whereas \(6\) is even. Hence the increment \(1\) may be assigned to the first, third, or fourth exponent, and no other Cartan summand occurs.
\end{example}

\begin{theorem}[Lucas]\label{thm:lucas}
For nonnegative integers \(a\) and \(r\),
\[
 \binom ar\equiv1\pmod2
 \quad\Longleftrightarrow\quad
 r_j\leq a_j\text{ for every binary digit }j,
\]
where \(a=\sum_j a_j2^j\) and \(r=\sum_j r_j2^j\), with \(a_j,r_j\in\{0,1\}\).
\end{theorem}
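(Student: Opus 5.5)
The plan is to prove Lucas' theorem through the Frobenius identity in characteristic \(2\), working in \(\F_2[t]\). The key observation is that \((1+t)^2=1+t^2\) in \(\F_2[t]\). By induction on \(j\) this gives
\[
 (1+t)^{2^j}=1+t^{2^j}\qquad\text{for every }j\geq0.
\]

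Next I write \(a=\sum_j a_j2^j\) with \(a_j\in\{0,1\}\). Then
\[
 (1+t)^a=\prod_j (1+t)^{a_j2^j}=\prod_{j:\,a_j=1}\bigl(1+t^{2^j}\bigr).
\]
I expand the right-hand side as a sum over subsets \(S\subseteq\{j: a_j=1\}\). The subset \(S\) contributes the monomial \(t^{\sum_{j\in S}2^j}\). Uniqueness of binary expansions shows that distinct subsets \(S\) give distinct exponents, so no cancellation occurs in \(\F_2\). Hence the coefficient of \(t^r\) equals \(1\) exactly when \(r=\sum_{j\in S}2^j\) for some such \(S\). That happens precisely when every binary digit \(r_j\) of \(r\) satisfies \(r_j\leq a_j\).

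On the other side, the binomial theorem gives the coefficient of \(t^r\) in \((1+t)^a\) as \(\binom ar \bmod 2\). Comparing the two expansions yields the equivalence. The degenerate case \(r>a\) needs no separate treatment. There \(\binom ar=0\), and some digit of \(r\) must exceed the corresponding digit of \(a\), since otherwise \(r\leq a\).

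I expect no genuine obstacle. The only point needing care is the no-cancellation step, which is exactly the uniqueness of binary representations.
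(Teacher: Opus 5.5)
Your proof is correct and complete. The Frobenius identity \((1+t)^{2^j}=1+t^{2^j}\) in \(\F_2[t]\) gives the factorization \((1+t)^a=\prod_{a_j=1}(1+t^{2^j})\). Uniqueness of binary expansions rules out cancellation in its expansion, which yields the digitwise criterion. Comparing with the binomial theorem reduced modulo \(2\) then finishes the argument, and the case \(r>a\) is covered automatically.

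The paper itself does not prove this statement; it only quotes Lucas' theorem from Walker and Wood \cite[Sections~1.1 and~1.4]{WalkerWoodI}. Your argument therefore supplies a self-contained justification. In the paper's own language it is the same computation: the total square satisfies
\[
 \Sq(x^a)=(x+x^2)^a=x^a(1+x)^a,
 \qquad
 \Sq(x^{2^j})=x^{2^j}+x^{2^{j+1}},
\]
and this is exactly how the reduced binomial coefficients enter formula~\eqref{eq:cartan-monomial}.
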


We shall write \(r\subseteq_2 a\) for the digitwise condition in Theorem~\ref{thm:lucas}. Thus a summand indexed by \((r_1,\ldots,r_n)\) occurs in \eqref{eq:cartan-monomial} precisely when \(r_i\subseteq_2 a_i\) for every \(i\). This is the form of Lucas' theorem used in all explicit calculations below; see \cite[Sections~1.1 and~1.4]{WalkerWoodI}.

\begin{definition}[Hit elements and cohits]\label{def:hit-cohit}
Let \(M=\bigoplus_{d\geq0}M^d\) be a graded left \(\A\)-module. Its degree-\(d\) hit subspace and cohit quotient are
\[
 \Hit^d(M)=\A^+M\cap M^d
 =\sum_{r>0}\Sq^r(M^{d-r}),
 \qquad
 Q_d(M)=M^d/\Hit^d(M).
\]
An element of \(\Hit^d(M)\) is called hit. The equality with the sum of the images of the individual positive squares follows because every monomial in \(\A^+\) has a leftmost positive square.
\end{definition}

\begin{example}\label{ex:hit-basic}
The polynomial
\[
 x_1^2x_2+x_1x_2^2
\]
is hit because
\[
 \Sq^1(x_1x_2)=x_1^2x_2+x_1x_2^2.
\]
This identity does not imply that either summand is hit separately. The hit problem is linear rather than termwise: membership must be tested in the span of all positive Steenrod images.
\end{example}

\begin{definition}[Permutation action, stabilizers, and orbit sums]\label{def:orbit-sum}
Let \(G\leq\Sigma_n\). We use the right action determined by
\[
 x_i^\eta=x_{\eta(i)},
 \qquad \eta\in G,
\]
so that \((f^\eta)^\gamma=f^{\eta\gamma}\). The stabilizer of \(f\in P(n)\) is
\[
 G_f=\Stab_G(f)=\{\eta\in G:f^\eta=f\}.
\]
If \(f\) is a monomial, its \(G\)-orbit sum is
\[
 \sigma_G(f)=\sum_{\eta\in G_f\backslash G}f^\eta\in P(n)^G.
\]
For \(G=\Sigma_n\), we write \(\sigma(f)\). The full transfer is
\[
 \operatorname{Tr}_G(f)=\sum_{\eta\in G}f^\eta
 =|G_f|\,\sigma_G(f).
\]
Since the coefficient field has characteristic \(2\), the full transfer vanishes whenever \(|G_f|\) is even, although the orbit sum may be nonzero.
\end{definition}

\begin{example}\label{ex:orbit-vs-transfer}
For \(f=x_1^ax_2^ax_3^bx_4^b\), with \(a\neq b\), the stabilizer in \(\Sigma_4\) is
\[
 G_f=\langle(12),(34)\rangle\cong\Sigma_2\times\Sigma_2.
\]
Its orbit has six elements, so \(\sigma(f)\) is a six-term polynomial. In contrast,
\[
 \operatorname{Tr}_{\Sigma_4}(f)=4\sigma(f)=0.
\]
This is the basic reason that ordinary averaging cannot be used for monomials with nontrivial exponent stabilizer.
\end{example}

\begin{definition}[Monomial symmetric functions]\label{def:monomial-symmetric}
Let
\[
 \lambda=(\lambda_1,\ldots,\lambda_n),
 \qquad
 \lambda_1\geq\cdots\geq\lambda_n\geq0,
\]
be a partition of \(d\) with at most \(n\) positive parts. The associated monomial symmetric function is
\[
 m_\lambda=\sigma(x_1^{\lambda_1}\cdots x_n^{\lambda_n}).
\]
The set of all \(m_\lambda\), with \(\lambda\) ranging over the partitions of \(d\) of length at most \(n\), is an \(\F_2\)-basis of
\[
 B^d(n)=P^d(n)^{\Sigma_n}.
\]
When \(n=4\), we abbreviate \(m_{(a,b,c,d)}\) to \(m_{a,b,c,d}\). The linear orbit-sum map in degree \(d\) is
\[
 \OrbSum:P^d(4)\longrightarrow B^d(4),
 \qquad
 \OrbSum(x^a)=\sigma(x^a)
\]
on the ordered-monomial basis, extended \(\F_2\)-linearly.
\end{definition}

\begin{example}\label{ex:m3311}
The notation that will recur in Sections~\ref{sec:tower} and~\ref{sec:computational-evidence} is illustrated by
\[
\begin{aligned}
 m_{3,3,1,1}={}&x_1^3x_2^3x_3x_4+x_1^3x_3^3x_2x_4+x_1^3x_4^3x_2x_3\\
 &+x_2^3x_3^3x_1x_4+x_2^3x_4^3x_1x_3+x_3^3x_4^3x_1x_2.
\end{aligned}
\]
There are six terms because the exponent pattern has stabilizer \(\Sigma_2\times\Sigma_2\), of order \(4\), and hence orbit size \(24/4=6\).
\end{example}

The Steenrod action commutes with permutation matrices \cite[Proposition~1.2.5]{WalkerWoodI}. Hence
\[
 B(n)=P(n)^{\Sigma_n}
\]
is an unstable \(\A\)-submodule, and \(\Sq^r(m_\lambda)\) is again symmetric. We shall use the following result of Janfada for the trivial-stabilizer part of the four-variable induction.

\begin{theorem}[Janfada's distinct-exponent theorem]\label{thm:janfada-distinct}
Let \(f\in P(n)\) be a monomial whose exponents are pairwise distinct, equivalently, whose stabilizer in \(\Sigma_n\) is trivial. If \(f\) is hit in \(P(n)\), then its symmetrization
\[
 \sigma(f)=\sum_{\eta\in\Sigma_n}f^\eta
\]
is hit in \(B(n)\).
\end{theorem}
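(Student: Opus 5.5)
The plan is a transfer argument. When the stabilizer of \(f\) is trivial, the full transfer \(\operatorname{Tr}_{\Sigma_n}\) coincides with the orbit sum \(\sigma\), and \(\operatorname{Tr}_{\Sigma_n}\) is a map of \(\A\)-modules \(P(n)\to B(n)\). So the statement should follow from pushing a hit equation for \(f\) through the transfer.

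\textbf{Step 1: the transfer commutes with the Steenrod action.} Define
\[
 \operatorname{Tr}=\operatorname{Tr}_{\Sigma_n}:P(n)\to P(n),\qquad g\mapsto\sum_{\eta\in\Sigma_n}g^\eta,
\]
extended \(\F_2\)-linearly. Each permutation \(\eta\) acts by an algebra automorphism sending \(x_i\mapsto x_{\eta(i)}\). The total square satisfies \(\Sq(x_{\eta(i)})=x_{\eta(i)}+x_{\eta(i)}^2\), so \(\Sq\) commutes with the permutation action; this is the fact cited from \cite[Proposition~1.2.5]{WalkerWoodI}. Taking homogeneous components gives \(\Sq^r(g^\eta)=(\Sq^r g)^\eta\) for every \(r\). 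Summing over \(\eta\) yields
\[
 \operatorname{Tr}(\Sq^r g)=\Sq^r(\operatorname{Tr}g).
\]
The image of \(\operatorname{Tr}\) is \(\Sigma_n\)-invariant, because \(\operatorname{Tr}(g)^\gamma=\sum_\eta g^{\eta\gamma}=\operatorname{Tr}(g)\). Hence \(\operatorname{Tr}\) is a degree-preserving \(\A\)-linear map \(P(n)\to B(n)\).

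\textbf{Step 2: apply it to a hit equation.} Suppose \(f\) is hit in \(P(n)\). By Definition~\ref{def:hit-cohit}, we can write
\[
 f=\sum_{r>0}\Sq^r(g_r),\qquad g_r\in P^{d-r}(n).
\]
Applying \(\operatorname{Tr}\) gives
\[
 \operatorname{Tr}(f)=\sum_{r>0}\Sq^r\bigl(\operatorname{Tr}(g_r)\bigr),\qquad \operatorname{Tr}(g_r)\in B^{d-r}(n).
\]
The right-hand side therefore lies in \(\Hit^d(B(n))\).

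\textbf{Step 3: identify \(\operatorname{Tr}(f)\) with \(\sigma(f)\).} The exponents of \(f\) are pairwise distinct, so \(f^\eta=f\) forces \(\eta=1\), and the stabilizer \(G_f\) is trivial. By Definition~\ref{def:orbit-sum}, \(\operatorname{Tr}(f)=|G_f|\,\sigma(f)=\sigma(f)\). This also agrees with the sum over all of \(\Sigma_n\) written in the statement. Therefore \(\sigma(f)\) is hit in \(B(n)\).

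I expect no serious obstacle. The one point needing care is that the \(g_r\) are arbitrary polynomials and may have nontrivial stabilizers, so \(\operatorname{Tr}(g_r)\) can be \(0\) or an even multiple that vanishes. This causes no difficulty: the identity in Step 2 holds regardless, and only the stabilizer of the target \(f\) matters. That is exactly why the argument breaks down when \(f\) has repeated exponents, since then \(\operatorname{Tr}(f)\) vanishes instead of equalling \(\sigma(f)\).
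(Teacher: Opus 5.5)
Your transfer argument is correct and complete. Since $\operatorname{Tr}_{\Sigma_n}$ is an $\A$-linear map $P(n)\to B(n)$, it carries any hit equation for $f$ to a symmetric hit equation for $\operatorname{Tr}_{\Sigma_n}(f)$, which equals $\sigma(f)$ because the stabilizer is trivial. The paper does not reprove this result; it cites Janfada's Theorem~3 and remarks that for trivial stabilizer the symmetrization equals the full transfer. Your argument is exactly the $H=1$ case of the coset-norm computation in Lemma~\ref{lem:coset-parity} (cf.\ Example~\ref{ex:parity-abcd}), where no fixedness condition on the preimages $q_r$ is required.
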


This is Theorem~3 of Janfada \cite{Janfada2011}. Janfada defines the symmetrization of a monomial by summing over left coset representatives of its stabilizer. When the exponents are pairwise distinct, the stabilizer is trivial and the symmetrization agrees with the full transfer. For nontrivial stabilizers, multiplicities may vanish in characteristic \(2\), as in Example~\ref{ex:orbit-vs-transfer}.

\begin{definition}[Blocks and \(\omega\)-sequences]\label{def:block}
Let \(a=(a_1,\ldots,a_n)\). Following Walker and Wood, write
\[
 a_i=\sum_{j\geq1}a_{ij}2^{j-1},
 \qquad a_{ij}\in\{0,1\}.
\]
The block \(F(a)\) is the matrix-like array whose \((i,j)\)-entry is \(a_{ij}\). Thus the \(i\)-th row is the reversed binary expansion of \(a_i\), with the columns corresponding successively to \(2^0,2^1,2^2,\ldots\). Its \(\omega\)-sequence is the sequence of column sums
\[
 \omega(a)=\omega(F(a))=(\omega_1,\omega_2,\ldots),
 \qquad
 \omega_j=\sum_{i=1}^n a_{ij}.
\]
The \(\alpha\)-sequence is
\[
 \alpha(a)=(\alpha(a_1),\ldots,\alpha(a_n)),
\]
where \(\alpha(q)\) is the number of nonzero binary digits of \(q\). We write \(\alpha^*(a)\) for the nondecreasing rearrangement of \(\alpha(a)\). The degree is the \(2\)-degree of the \(\omega\)-sequence:
\[
 |a|=\sum_{j\geq1}2^{j-1}\omega_j(a).
\]
We use the block as an alternative notation for the corresponding monomial and write
\[
 x^{F(a)}=x^a,
 \qquad |F(a)|=|a|,
 \qquad \sigma(F(a))=\sigma(x^a),
 \qquad \Stab(F(a))=\Stab_{\Sigma_n}(x^a).
\]
Weight sequences are compared in the left lexicographic order, denoted \(<_l\). When quoting Walker and Wood's ordered-monomial notation, \([a_1,\ldots,a_n]\) denotes \(x_1^{a_1}\cdots x_n^{a_n}\). A quotient class \([f]\in Q_d(M)\) will always be identified by its ambient quotient. These conventions agree with \cite[Sections~3.3 and~5.1]{WalkerWoodI}.
\end{definition}

\begin{example}\label{ex:block-6611}
For \(a=(6,6,1,1)\), the reversed binary rows are \(6=(0,1,1)\) and \(1=(1,0,0)\), so
\[
 F(6,6,1,1)=
 \begin{pmatrix}
 0&1&1\\
 0&1&1\\
 1&0&0\\
 1&0&0
 \end{pmatrix}.
\]
Consequently
\[
 \omega(6,6,1,1)=(2,2,2),
 \qquad
 \alpha^*(6,6,1,1)=(1,1,2,2).
\]
For comparison,
\[
 \omega(5,5,1,1)=(4,0,2),
 \qquad
 \alpha^*(5,5,1,1)=(1,1,2,2).
\]
Thus the row-sum data alone does not determine the column structure.
\end{example}

\begin{definition}[Spikes and the numerical function \(\mu\)]\label{def:spikes}
A monomial is a spike if every exponent has the form \(2^r-1\), where \(r\geq0\); the value \(r=0\) gives the zero exponent. For \(d>0\), let \(\mu(d)\) be the least integer \(s\) for which
\[
 d=\sum_{i=1}^s(2^{r_i}-1),
 \qquad r_i>0.
\]
Set \(\mu(0)=0\). A minimal spike in degree \(d\) is a spike whose weight sequence is least in the left order among all degree-\(d\) spikes. Its weight sequence is denoted \(\omega_{\min}(d)\).
\end{definition}

The following numerical characterization is used repeatedly. If \(\alpha(q)\) denotes the binary digit sum of \(q\), then
\[
 \mu(d)\leq k
 \quad\Longleftrightarrow\quad
 \alpha(d+k)\leq k.
 \tag{2.2}\label{eq:mu-alpha}
\]
This is \cite[Proposition~2.4.4]{WalkerWoodI}.

\begin{example}\label{ex:minimal-spikes}
Since
\[
 12=7+3+1+1,
\]
one has \(\mu(12)=4\), and a minimal spike partition is \((7,3,1,1)\). Its weight sequence is
\[
 \omega_{\min}(12)=(4,2,1).
\]
In degree \(14\), one has \(14=7+7\), so \(\mu(14)=2\), a minimal spike partition is \((7,7,0,0)\), and
\[
 \omega_{\min}(14)=(2,2,2).
\]
The block \((6,6,1,1)\) from Example~\ref{ex:block-6611} has the same weight sequence \((2,2,2)\) as the degree-\(14\) minimal spike, which is why the lower-spike theorem does not decide it.
\end{example}

\begin{theorem}[Peterson, symmetric Peterson, and lower-spike reduction]\label{thm:peterson-lower-spike}
For every \(n\) and \(d\),
\[
 Q_d(P(n))=0
 \quad\Longleftrightarrow\quad
 \mu(d)>n.
\]
If \(G\leq\Sigma_n\), then
\[
 Q_d(P(n)^G)=0
 \quad\Longleftrightarrow\quad
 \mu(d)>n.
\]
If \(\mu(d)\leq n\), \(f\in P^d(n)\) is a monomial, and
\[
 \omega(f)<_l\omega_{\min}(d),
\]
then \(\sigma_G(f)\) is hit in \(P(n)^G\).
\end{theorem}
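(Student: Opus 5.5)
This theorem packages three classical results, so the plan is to assemble them rather than reprove them from scratch. For the first equivalence, the implication \(\mu(d)>n\Rightarrow Q_d(P(n))=0\) is Wood's theorem \cite{Wood1989}. I would sketch it as follows. By \eqref{eq:mu-alpha}, \(\mu(d)>n\) means \(\alpha(d+n)>n\). One then uses the \(\chi\)-trick: for the conjugate squares,
\[
 \langle\chi(\Sq^k)x^a,\,\cdot\,\rangle
\]
gives \(x^a\cdot\chi(\Sq^k)(y)\equiv\Sq^k(x^a)\,y\) modulo hit elements. Taking \(f=x_1\cdots x_n\cdot g\) and using the Wu-type bound \(\chi(\Sq^k)(x_i^{m})=0\) unless \(\alpha\)-conditions hold, every monomial can be shown to be hit by downward induction on \(\omega\). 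The converse direction is easier. If \(\mu(d)\le n\), choose a spike \(x_1^{2^{r_1}-1}\cdots x_s^{2^{r_s}-1}\) with \(s\le n\). Such a spike never appears with nonzero coefficient in any \(\Sq^r(x^b)\), \(r>0\): a Cartan summand would need some \(b_i+r_i=2^{r_i'}-1\) with \(r_i\subseteq_2 b_i\) and \(r_i>0\), which Theorem~\ref{thm:lucas} forbids. Hence the spike is nonzero in \(Q_d(P(n))\).

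For the invariant version with \(G\le\Sigma_n\), I would prove each implication separately. If \(\mu(d)\le n\), the orbit sum \(\sigma_G\) of a spike is nonzero in \(Q_d(P(n)^G)\), because \(P(n)^G\subseteq P(n)\) and that orbit sum contains the spike with coefficient \(1\). Any \(G\)-hit expression is ordinary hit, so the spike coefficient of a hit expression vanishes by the non-hit argument above. Conversely, if \(\mu(d)>n\), I would follow Janfada--Wood's symmetric Peterson argument \cite{JanfadaWood2002}. Since \(P^d(n)^G\) has basis \(\{\sigma_G(x^a)\}\), it suffices to hit each orbit sum within \(P(n)^G\). The point is that Wood's hitting formulas can be chosen naturally in the monomial: the \(\chi\)-trick expression \(\Sq^k(u)\,v+u\,\chi(\Sq^k)v\) is built from \(x^a\) by operations commuting with permutations. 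Summing over coset representatives of \(G_{x^a}\backslash G\) then produces a \(G\)-invariant preimage.

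The lower-spike statement is Walker--Wood's Proposition~25.1.5 \cite{WalkerWoodII}, and Singer's criterion \cite{Singer1991} covers the \(P(n)\) case. My route is an induction on \(\omega\) in the left order, restricted to monomials with \(\omega<_l\omega_{\min}(d)\). First, if the block of \(f\) has a column with \(\omega_j\) too small to complete the minimal pattern, one can write \(f=u^2 v\) so that \(f\equiv\chi(\Sq^k)\)-terms with strictly smaller \(\omega\) modulo hits. Applied to \(\sigma_G(f)\) via the orbit-summed, hence \(G\)-invariant, version of the same identity, this expresses \(\sigma_G(f)\) modulo \(\Hit(P(n)^G)\) as orbit sums of monomials with smaller \(\omega\). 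These are hit by induction, with base case the minimal \(\omega\) in degree \(d\).

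The main obstacle is exactly the equivariance issue stressed in the introduction. When \(G_{x^a}\) is nontrivial, one cannot average, because the transfer may vanish. The hitting identity must therefore be arranged to be \(G_{x^a}\)-stable before coset summation. I would handle this by choosing the splitting \(u^2v\) (and the square \(\Sq^k\)) canonically from the block, for example by acting on entire columns. The splitting is then fixed by the stabilizer, and summing over \(G_{x^a}\backslash G\) yields a genuine \(G\)-invariant relation with the correct multiplicities.
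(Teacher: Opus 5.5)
Your proposal rests on the same citations as the paper, so as a proof by reference it agrees with the paper's treatment. The paper proves nothing here: it attributes the first equivalence to Wood, the invariant one to Janfada--Wood, and the lower-spike assertion to Walker--Wood's Proposition~25.1.5. Your spike argument for both non-vanishing directions is also correct: a spike never occurs in \(\Sq^r(x^b)\) for \(r>0\), and \(\Hit(P(n)^G)\subseteq\Hit(P(n))\). The problems are in the self-contained sketches you add.

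\textbf{Wood's direction.} The sketch with \(f=x_1\cdots x_n g\) and an induction on \(\omega\) is not the mechanism; not every monomial has that form. The argument is a single step. Write \(x^a=x^\varepsilon(x^b)^2\) with \(\varepsilon_i\equiv a_i\pmod 2\) and \(k=|b|\). Then
\[
 x^a=x^b\,\chi(\Sq^k)(x^\varepsilon)
 +\sum_{i=1}^{k}\Sq^i\bigl(x^b\,\chi(\Sq^{k-i})(x^\varepsilon)\bigr).
\]
If \(\chi(\Sq^k)(x^\varepsilon)\neq0\), then \(k=\sum_{i\in\operatorname{supp}\varepsilon}(2^{j_i}-1)\). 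That gives \(d=\sum_{i\in\operatorname{supp}\varepsilon}(2^{j_i+1}-1)\), hence \(\mu(d)\leq|\varepsilon|\leq n\). The preimages \(x^b\chi(\Sq^{k-i})(x^\varepsilon)\) are computed coordinatewise from \(a\), so they are fixed by \(\Stab_G(x^a)\). This is exactly the fact your coset-summation argument for \(P(n)^G\) needs, and with it that part is fine.

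\textbf{The genuine gap is the lower-spike route.} The step ``write \(f=u^2v\) so that \(f\equiv\chi(\Sq^k)\)-terms with strictly smaller \(\omega\)'' is asserted, not proved, and it is the entire content of Singer's theorem.

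\begin{itemize}
\item The \(\chi\)-trick has no built-in monotonicity in \(\omega\). For example, \(x_1x_2^2\equiv x_1^2x_2\) via \(\Sq^1(x_1x_2)\), and both have weight \((1,1)\).
\item The parity splitting \(u^2v\) only sees the first column. It proves \(f\) hit outright when \(\omega_1(f)<\mu(d)\).
\item You give no mechanism when \(f\) first departs from \(\omega_{\min}(d)\) in a column \(k+1\geq2\).
\end{itemize}

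The missing ingredients are as follows. Split by columns, \(f=gh^{2^k}\), with every exponent of \(g\) below \(2^k\). The part of a minimal spike \(z\) beyond its \(k\)-th column is again a minimal spike, of the same degree as \(h\) because \(f\) and \(z\) agree in the first \(k\) columns. Hence \(\omega_1(h)=\omega_{k+1}(f)<\omega_{k+1}(z)=\mu(\deg h)\), and the first-column case gives \(h=\sum_{i>0}\Sq^i(q_i)\). Then
\[
 gh^{2^k}=\sum_{i>0}\Sq^{2^ki}\bigl(g\,q_i^{2^k}\bigr)
 +\sum_{i>0}\sum_{0\leq l<i}\Sq^{2^k(i-l)}(g)\,\bigl(\Sq^l q_i\bigr)^{2^k}.
\]
Every monomial in the double sum has strictly smaller \(\omega\). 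In \(\Sq^m(g)\) with \(m>0\) the increments satisfy \(r_t\subseteq_2 g_t<2^k\). So the lowest column receiving an increment lies among the first \(k\) and strictly decreases, while all earlier columns are unchanged. This is where the descent in \(\omega\) actually comes from.

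The construction is natural in \(f\): \(g\), \(h\) and the \(q_i\) are determined columnwise and coordinatewise. So all preimages are \(\Stab_G(f)\)-fixed, and your coset norm followed by induction on \(\omega\) goes through. The lower terms contribute orbit sums with some parity coefficients, which is harmless because they are hit by induction.
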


The ordinary numerical statement is Wood's theorem \cite{Wood1989}; see also \cite[Theorem~2.5.5]{WalkerWoodI}. The invariant numerical statement is the symmetric Peterson theorem of Janfada and Wood \cite{JanfadaWood2002}. The final assertion is Walker and Wood's symmetric minimal-spike theorem \cite[Proposition~25.1.5]{WalkerWoodII}. In particular, the lower-spike step in the present four-variable induction is unconditional.

\begin{definition}[The standard four-row vertical splitting]\label{def:four-splitting}
Assume \(\mu(d)\leq4\), and write a minimal spike partition in the form
\[
 (2^{r_1}-1,2^{r_2}-1,2^{r_3}-1,2^{r_4}-1),
 \qquad r_1\geq r_2\geq r_3\geq r_4\geq0.
\]
Then
\[
 \omega_{\min}(d)=
 (\underbrace{4,\ldots,4}_{r_4},
  \underbrace{3,\ldots,3}_{r_3-r_4},
  \underbrace{2,\ldots,2}_{r_2-r_3},
  \underbrace{1,\ldots,1}_{r_1-r_2}).
\]
For a four-row block \(F\), the corresponding division into consecutive column intervals is written
\[
 F=F^{(4)}F^{(3)}F^{(2)}F^{(1)}.
\]
Empty intervals are allowed.
\end{definition}

\begin{example}\label{ex:four-splitting}
For \(d=12\), the minimal spike \((7,3,1,1)\) has \((r_1,r_2,r_3,r_4)=(3,2,1,1)\). The standard splitting therefore consists of one \(4\)-column, one \(3\)-column, and one \(2\)-column, while the \(1\)-part is empty. This is the four-row analogue of the standard three-row splitting used in Janfada's analysis of \(B(3)\) \cite{Janfada2008,JanfadaCriteriaB3}.
\end{example}

We next recall the ordinary divided-power dual. It will be used both to explain why the old detector condition fails and to compare ordinary local survivor modules with the relative detector introduced in Section~\ref{sec:relative-dual}.

\begin{definition}[The divided-power dual and the ordinary Steenrod kernel]\label{def:DP}
Following Walker and Wood, let
\[
 \DP_d(n)=\operatorname{Hom}_{\F_2}(P^d(n),\F_2),
 \qquad
 \DP(n)=\bigoplus_{d\geq0}\DP_d(n).
\]
Its \(d\)-monomial basis is
\[
 v^{(a)}=v_1^{(a_1)}\cdots v_n^{(a_n)},
 \qquad a\in\mathbf N^n,
\]
with pairing
\[
 \langle v^{(a)},x^b\rangle=\delta_{a,b}.
\]
The dual Steenrod action, often referred to as the down-square action, is defined by adjunction. We write
\[
 \Sqdown^r:\DP_d(n)\longrightarrow\DP_{d-r}(n)
\]
for the linear map satisfying
\[
 \langle\Sqdown^r(\Psi),f\rangle
 =\langle\Psi,\Sq^r f\rangle
\]
for every \(\Psi\in\DP_d(n)\) and \(f\in P^{d-r}(n)\). The symbol \(\Sqdown^r\) denotes Walker and Wood's down Steenrod square; the downward subscript is used only to distinguish it typographically from the cohomological operation on \(P(n)\). The degree-\(d\) ordinary Steenrod kernel is
\[
 K_d(n)=\{\Psi\in\DP_d(n):\Sqdown^r(\Psi)=0\text{ for every }r>0\}.
\]
Equivalently,
\[
 K_d(n)\cong Q_d(P(n))^*.
\]
See \cite[Sections~9.1, 9.3, and~9.4]{WalkerWoodI}.
\end{definition}

\begin{example}\label{ex:down-square}
In one variable,
\[
 \Sqdown^r\bigl(v^{(a)}\bigr)=\binom{a-r}{r}v^{(a-r)}.
\]
Hence \(\Sqdown^1(v^{(2)})=v^{(1)}\), whereas \(\Sqdown^1(v^{(3)})=0\). The second equality reflects the spike property of the exponent \(3=2^2-1\).
\end{example}

\begin{definition}[Local ordinary cohits and strongly spike-free classes]\label{def:local-ordinary}
Fix a degree \(d\) and a weight sequence \(\omega\). Let
\[
 P^d_{\leq\omega}(n)
 =\operatorname{span}\{x^a:|a|=d,\ \omega(a)\leq_l\omega\},
\]
and define \(P^d_{<\omega}(n)\) analogously. The local ordinary cohit quotient is
\[
 Q_\omega(n)=
 \frac{P^d_{\leq\omega}(n)}
 {P^d_{<\omega}(n)+\bigl(\Hit^d(P(n))\cap P^d_{\leq\omega}(n)\bigr)}.
\]
Its transpose dual is denoted \(K_\omega(n)\); here \(M^{\tr}\) denotes the linear dual equipped with the action obtained by transposing the matrices in the right \(GL(n,\F_2)\)-action. Walker and Wood define the local spike submodule \(J_\omega(n)\subseteq K_\omega(n)\) and the strongly spike-free quotient \(\SF_\omega(n)\), and prove
\[
 \SF_\omega(n)\cong
 \bigl(K_\omega(n)/J_\omega(n)\bigr)^{\tr}
\]
for decreasing \(\omega\) \cite[Propositions~30.2.2 and~30.2.4]{WalkerWoodII}.
\end{definition}

\begin{example}\label{ex:local-ordinary-layer}
In degree \(14\), the monomial \(x_1^6x_2^6x_3x_4\) has weight \((2,2,2)\), so it defines a class in \(Q_{(2,2,2)}(4)\). A degree-\(14\) monomial of strictly smaller weight is zero in this local quotient by definition, even if it is nonzero in the full polynomial algebra. Thus \(Q_\omega(4)\) records only the top \(\omega\)-layer after ordinary hits and lower-weight terms have both been removed.
\end{example}

\begin{remark}\label{rem:ordinary-local-warning}
Definition~\ref{def:local-ordinary} concerns the associated graded object of the ordinary cohit module. A representative of a class in \(K_\omega(n)\) need not be supported only in the single weight layer \(\omega\); higher-weight terms may be required to obtain an element of the full kernel \(K_d(n)\). More importantly for the present paper, even a full element of \(K_d(n)\) annihilates every ordinary hit. It therefore cannot detect a symmetric polynomial that is already known to be ordinary hit. The appropriate detector for this obstruction is relative, and it is constructed in the next section.
\end{remark}

\section{Relative dual obstruction theory}\label{sec:relative-dual}

The preceding section distinguishes the ordinary and symmetric cohit spaces and fixes the two relevant dual conventions. This section constructs the relative quotient that measures the kernel of the inclusion on cohits. We prove Theorem~\ref{thm:intro-relative-duality}, show why ordinary-kernel functionals cannot detect residual terms that are already ordinary hit, and derive the relative detection criterion used in the induction.

\begin{definition}[The dual symmetric algebra]\label{def:DS}
For fixed \(n\) and \(d\), let
\[
 \DS_d(n)=\operatorname{Hom}_{\F_2}(B^d(n),\F_2),
 \qquad
 \DS(n)=\bigoplus_{d\geq0}\DS_d(n).
\]
If \(\lambda=(\lambda_1,\ldots,\lambda_r)\) is a partition of \(d\) of length \(r\leq n\), let \(b_\lambda\in\DS_d(n)\) be dual to the monomial symmetric function \(m_\lambda\):
\[
 b_\lambda(m_\mu)=\delta_{\lambda,\mu}.
\]
In Walker and Wood's stable notation, \(b_\lambda=b_{\lambda_1}\cdots b_{\lambda_r}\), and \(\DS(n)\) is the subcoalgebra spanned by the monomials of length at most \(n\). The dual Steenrod action is defined by
\[
 \langle\Sqdown^r(\Theta),b\rangle
 =\langle\Theta,\Sq^r b\rangle,
 \qquad
 \Theta\in\DS_d(n),\quad b\in B^{d-r}(n).
\]
The symmetric Steenrod kernel is
\[
 \Ksym_d(n)=K_d(\DS(n))
 =\{\Theta\in\DS_d(n):\Sqdown^r(\Theta)=0\text{ for every }r>0\}.
\]
This agrees with the finite-variable part of the dual symmetric algebra in \cite[Chapter~26]{WalkerWoodII}.
\end{definition}

\begin{example}\label{ex:dual-symmetric-functional}
In degree \(8\), the coefficient functional \(b_{(3,3,1,1)}\) evaluates to \(1\) on \(m_{3,3,1,1}\) and to \(0\) on the other fourteen monomial symmetric basis elements. Since \((3,3,1,1)\) is a spike partition, no positive Steenrod square contains a monomial in this orbit. Hence \(b_{(3,3,1,1)}\in\Ksym_8(4)\). This functional detects a symmetric non-hit class, but not a relative obstruction, because the ordered monomial is also non-hit.
\end{example}

By Definition~\ref{def:hit-cohit}, a functional belongs to \(\Ksym_d(n)\) exactly when it annihilates \(\Hit^d(B(n))\). Therefore
\[
 \Ksym_d(n)\cong Q_d(B(n))^*.
 \tag{3.1}\label{eq:Ksym-dual}
\]
This is the finite-variable symmetric dual considered by Singer and by Walker and Wood \cite{Singer2008,WalkerWoodII}.

\begin{definition}[Relative cohits and relative detectors]\label{def:relative}
Let
\[
 \iota_d:B^d(n)\hookrightarrow P^d(n)
\]
be inclusion. Since \(\Hit^d(B(n))\subseteq\Hit^d(P(n))\), inclusion induces
\[
 \bar\iota_d:Q_d(B(n))\longrightarrow Q_d(P(n)).
\]
Define the degree-\(d\) relative cohit space by
\[
 \Crel_d(n)=\ker\bar\iota_d
 =\frac{B^d(n)\cap\Hit^d(P(n))}{\Hit^d(B(n))}.
 \tag{3.2}\label{eq:relative-cohit}
\]
Restriction of ordinary divided-power functionals gives
\[
 \rho_d:K_d(n)\longrightarrow\Ksym_d(n),
 \qquad
 \rho_d(\Psi)=\Psi|_{B^d(n)}.
\]
The relative detector space is
\[
 \Rel_d(n)=\operatorname{coker}\rho_d
 =\frac{\Ksym_d(n)}{\rho_d(K_d(n))}.
 \tag{3.3}\label{eq:relative-detector}
\]
For \(b\in B^d(n)\cap\Hit^d(P(n))\), its class in \(\Crel_d(n)\) is written \([b]_{\mathrm{rel}}\). For \(\Theta\in\Ksym_d(n)\), its class in \(\Rel_d(n)\) is written \([\Theta]_{\mathrm{rel}}\).
\end{definition}

\begin{example}\label{ex:relative-meaning}
Let \(b\in B^d(n)\). If \(b\) is not ordinary hit, then it does not represent a class of \(\Crel_d(n)\). If \(b\) is symmetrically hit, then its relative class is zero. A nonzero element of \(\Crel_d(n)\) is therefore represented precisely by a polynomial that is ordinary hit but cannot be expressed as a sum of positive Steenrod squares with symmetric preimages.
\end{example}

We are now in a position to prove Theorem~\ref{thm:intro-relative-duality}.

\begin{proof}[Proof of Theorem~\ref{thm:intro-relative-duality}]
We first identify the two dual cohit spaces. Since \(P^d(n)\) is finite-dimensional,
\[
 Q_d(P(n))^*
 \cong
 \{\Psi\in P^d(n)^*: \Psi(\Hit^d(P(n)))=0\}.
 \tag{3.4}\label{eq:ordinary-annihilator}
\]
Under the monomial divided-power pairing, a functional \(\Psi\) annihilates every element of \(\Hit^d(P(n))\) if and only if, for each \(r>0\) and each \(f\in P^{d-r}(n)\),
\[
 0=\langle\Psi,\Sq^r f\rangle
  =\langle\Sqdown^r(\Psi),f\rangle.
\]
Since the pairing in degree \(d-r\) is nondegenerate, this is equivalent to \(\Sqdown^r(\Psi)=0\) for every \(r>0\). Thus
\[
 Q_d(P(n))^*\cong K_d(n).
 \tag{3.5}\label{eq:ordinary-dual}
\]
The same argument applied to \(B(n)\), together with \eqref{eq:Ksym-dual}, gives
\[
 Q_d(B(n))^*\cong\Ksym_d(n).
 \tag{3.6}\label{eq:symmetric-dual}
\]

Under the identifications \eqref{eq:ordinary-dual} and \eqref{eq:symmetric-dual}, the dual map
\[
 \bar\iota_d^*:Q_d(P(n))^*\longrightarrow Q_d(B(n))^*
\]
is restriction of functionals. Indeed, if \([b]\in Q_d(B(n))\) and \(\Psi\in K_d(n)\), then
\[
 (\bar\iota_d^*\Psi)([b])
 =\Psi(\bar\iota_d([b]))
 =\Psi(b)
 =\rho_d(\Psi)(b).
\]
Hence
\[
 \bar\iota_d^*=\rho_d.
 \tag{3.7}\label{eq:dual-map-restriction}
\]

We now use a general finite-dimensional fact. Let \(T:V\to W\) be linear. The image of \(T^*:W^*\to V^*\) is the annihilator
\[
 (\ker T)^\perp
 =\{\phi\in V^*: \phi(v)=0\text{ for all }v\in\ker T\}.
\]
The inclusion \(\operatorname{im}T^*\subseteq(\ker T)^\perp\) is immediate. Conversely, if \(\phi\) vanishes on \(\ker T\), then the rule
\[
 T(v)\longmapsto\phi(v)
\]
defines a linear functional on \(\operatorname{im}T\). Extending that functional from \(\operatorname{im}T\) to \(W\) produces \(\psi\in W^*\) with \(\phi=\psi\circ T=T^*(\psi)\). Therefore
\[
 \operatorname{coker}T^*
 =V^*/(\ker T)^\perp
 \cong(\ker T)^*.
 \tag{3.8}\label{eq:linear-duality}
\]

Apply \eqref{eq:linear-duality} to
\[
 T=\bar\iota_d:Q_d(B(n))\longrightarrow Q_d(P(n)).
\]
Using \eqref{eq:dual-map-restriction}, we obtain
\[
 \frac{\Ksym_d(n)}{\rho_d(K_d(n))}
 =\operatorname{coker}\rho_d
 \cong
 \bigl(\ker\bar\iota_d\bigr)^*
 =\Crel_d(n)^*.
\]
This proves the asserted isomorphism.

It remains to verify the displayed pairing directly. Let
\[
 b\in B^d(n)\cap\Hit^d(P(n)),
 \qquad
 \Theta\in\Ksym_d(n).
\]
If \(b\) is replaced by \(b+h\), where \(h\in\Hit^d(B(n))\), then
\[
 \Theta(b+h)=\Theta(b)
\]
because \(\Theta\) annihilates symmetric hits. If \(\Theta\) is replaced by \(\Theta+\rho_d(\Psi)\), where \(\Psi\in K_d(n)\), then
\[
 (\Theta+\rho_d(\Psi))(b)=\Theta(b)+\Psi(b)=\Theta(b)
\]
because \(b\) is ordinary hit and \(\Psi\) annihilates ordinary hits. Thus the pairing is well-defined on both quotient spaces. The isomorphism just proved identifies each space with the full dual of the other, so the pairing is nondegenerate in both variables and is therefore perfect.
\end{proof}

\begin{corollary}[Relative detection criterion]\label{cor:relative-detection}
Let \(b\in B^d(n)\cap\Hit^d(P(n))\). Then
\[
 b\in\Hit^d(B(n))
\]
if and only if
\[
 \Theta(b)=0
\]
for every class \([\Theta]\in\Rel_d(n)\). In particular, if \(\rho_d\) is surjective, then every symmetric polynomial that is ordinary hit is symmetrically hit in degree \(d\).
\end{corollary}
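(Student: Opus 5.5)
The plan is to read the corollary off the perfect pairing of Theorem~\ref{thm:intro-relative-duality}, applied to the relative class of \(b\). Since \(b\in B^d(n)\cap\Hit^d(P(n))\), it defines a class \([b]_{\mathrm{rel}}\in\Crel_d(n)\) as in \eqref{eq:relative-cohit}. By definition of this quotient, \(b\in\Hit^d(B(n))\) holds exactly when \([b]_{\mathrm{rel}}=0\). So the statement reduces to showing that \([b]_{\mathrm{rel}}=0\) if and only if every detector class pairs to zero with it.

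For the forward direction, suppose \(b\in\Hit^d(B(n))\). Every \(\Theta\in\Ksym_d(n)\) annihilates symmetric hits, by \eqref{eq:Ksym-dual} and the definition of \(\Ksym_d(n)\), so \(\Theta(b)=0\) outright. I would also note that \(\Theta(b)\) depends only on \([\Theta]_{\mathrm{rel}}\); this is the well-definedness check already made in the proof of Theorem~\ref{thm:intro-relative-duality}. Indeed, changing \(\Theta\) by \(\rho_d(\Psi)\) with \(\Psi\in K_d(n)\) does not alter \(\Theta(b)\), because \(b\) is ordinary hit. Hence the condition ``\(\Theta(b)=0\) for every class \([\Theta]\in\Rel_d(n)\)'' is meaningful.

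For the converse, suppose \(\Theta(b)=0\) for all \(\Theta\in\Ksym_d(n)\). Then \([b]_{\mathrm{rel}}\) pairs trivially with all of \(\Rel_d(n)\). The pairing \(\Crel_d(n)\otimes\Rel_d(n)\to\F_2\) is perfect by Theorem~\ref{thm:intro-relative-duality}, so it is nondegenerate in the first variable, and therefore \([b]_{\mathrm{rel}}=0\), that is, \(b\in\Hit^d(B(n))\).

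If one prefers to avoid the perfect-pairing language, there is a direct argument through \eqref{eq:Ksym-dual}. The vanishing of every \(\Theta\in\Ksym_d(n)\cong Q_d(B(n))^*\) at \(b\) means that \([b]\in Q_d(B(n))\) is killed by all functionals on a finite-dimensional space, so \([b]=0\). Neither route has a real obstacle; the only care needed is the well-definedness observation above.

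Finally, for the last assertion, suppose \(\rho_d\) is surjective. Then \(\Rel_d(n)=0\), so the condition in the corollary holds vacuously for every \(b\in B^d(n)\cap\Hit^d(P(n))\). The equivalence just proved then shows that every such \(b\) lies in \(\Hit^d(B(n))\). Equivalently, \(\Crel_d(n)\cong\Rel_d(n)^*=0\) by Theorem~\ref{thm:intro-relative-duality}.
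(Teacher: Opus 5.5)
Your proposal is correct and follows the paper's argument: $[b]_{\mathrm{rel}}=0$ exactly when it pairs trivially with all of $\Rel_d(n)$, by the perfect pairing of Theorem~\ref{thm:intro-relative-duality}, and surjectivity of $\rho_d$ forces $\Rel_d(n)=0$. Your alternative via $\Ksym_d(n)\cong Q_d(B(n))^*$ is also valid and shows that the equivalence needs only the double-annihilator fact, not the duality theorem; one wording fix is that ``vacuously'' should be ``automatically,'' since when $\Rel_d(n)=0$ the zero class is still represented by functionals $\rho_d(\Psi)$, and these vanish on $b$ because $b$ is ordinary hit.
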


\begin{proof}
The relative class \([b]\in\Crel_d(n)\) is zero exactly when it pairs trivially with the full dual space \(\Rel_d(n)\). If \(\rho_d\) is surjective, then \(\Rel_d(n)=0\), so every relative class is zero.
\end{proof}

\begin{proposition}[Incompatibility of ordinary-kernel detection]\label{prop:ordinary-kernel-incompatibility}
Let \(F\) be a degree-\(d\) block such that \(x^F\in\Hit^d(P(4))\). Suppose that an exact equality in \(B^d(4)\) has the form
\[
 \sigma(F)=h_B+\OrbSum(p)+s,
 \tag{3.9}\label{eq:incompatibility-decomposition}
\]
where \(h_B\in\Hit^d(B(4))\), and every monomial occurring in \(p\) is individually hit in \(P(4)\). Then
\[
 s\in B^d(4)\cap\Hit^d(P(4)),
\]
and
\[
 \langle\Psi,s\rangle=0
 \qquad
 \text{for every }\Psi\in K_d(4).
\]
\end{proposition}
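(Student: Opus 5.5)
The plan is to solve the decomposition \eqref{eq:incompatibility-decomposition} for \(s\) and check each summand separately. Since we work over \(\F_2\), the identity rearranges to
\[
 s=\sigma(F)+h_B+\OrbSum(p).
\]
I will first verify that \(s\in B^d(4)\). Each of \(\sigma(F)\), \(h_B\), and \(\OrbSum(p)\) lies in \(B^d(4)\): the first and last are orbit sums, hence \(\Sigma_4\)-invariant, and \(h_B\in\Hit^d(B(4))\subseteq B^d(4)\). Hence \(s\) is symmetric.

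Next I will show that each summand lies in \(\Hit^d(P(4))\). The argument uses the fact, quoted before Theorem~\ref{thm:janfada-distinct} from \cite[Proposition~1.2.5]{WalkerWoodI}, that the Steenrod action commutes with permutations of variables.
\begin{itemize}
\item[(i)] For \(h_B\): we have \(\Hit^d(B(4))\subseteq\Hit^d(P(4))\), since a symmetric preimage is in particular an element of \(P(4)\). This inclusion was already used in Definition~\ref{def:relative}.
\item[(ii)] For \(\sigma(F)\): if \(x^F=\sum_{r>0}\Sq^r(g_r)\), then for each permutation \(\eta\) we get \((x^F)^\eta=\sum_{r>0}\Sq^r(g_r^\eta)\). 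So every term of the orbit \(\sigma(F)=\sum_{\eta\in\Stab(F)\backslash\Sigma_4}(x^F)^\eta\) is hit, and therefore the finite sum is hit.
\item[(iii)] For \(\OrbSum(p)\): write \(p=\sum_c x^c\) with each \(x^c\) hit in \(P(4)\). Then \(\OrbSum(p)=\sum_c\sigma(x^c)\) by linearity, and each \(\sigma(x^c)\) is hit by the argument in (ii).
\end{itemize}
Since \(\Hit^d(P(4))\) is a subspace, it follows that \(s\in\Hit^d(P(4))\). This gives the first assertion, \(s\in B^d(4)\cap\Hit^d(P(4))\).

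For the second assertion, I will invoke \eqref{eq:ordinary-dual}, i.e.\ the first step in the proof of Theorem~\ref{thm:intro-relative-duality}. That step shows every \(\Psi\in K_d(4)\) annihilates \(\Hit^d(P(4))\), because
\[
 \langle\Psi,\Sq^r f\rangle=\langle\Sqdown^r\Psi,f\rangle=0 .
\]
Applying this to the hit element \(s\) gives \(\langle\Psi,s\rangle=0\).

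The only point needing some care is (ii): the orbit sum, rather than the full transfer, is used, so we must sum hit representatives term by term. Permutation-equivariance handles this without any stabilizer-parity issues, so I expect no real obstacle.
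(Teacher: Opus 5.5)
Your proposal is correct and follows essentially the same route as the paper: permutation-equivariance of the Steenrod action makes every orbit translate of $x^F$, and of each monomial of $p$, ordinary hit. Combined with $\Hit^d(B(4))\subseteq\Hit^d(P(4))$, solving the identity for $s$ gives the first assertion, and the adjunction defining $K_d(4)$ gives the second.
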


\begin{proof}
The Steenrod action commutes with every permutation of the variables. Hence each orbit translate \((x^F)^\eta\) is ordinary hit, and therefore
\[
 \sigma(F)=\sum_{\eta\in\Stab(F)\backslash\Sigma_4}(x^F)^\eta
 \in\Hit^d(P(4)).
\]
Each monomial of \(p\) is ordinary hit by assumption, so every orbit translate of each such monomial is ordinary hit. It follows that \(\OrbSum(p)\in\Hit^d(P(4))\). Finally,
\[
 h_B\in\Hit^d(B(4))\subseteq\Hit^d(P(4)).
\]
Solving \eqref{eq:incompatibility-decomposition} for \(s\) shows that \(s\in\Hit^d(P(4))\). Because the equality is in \(B^d(4)\), the polynomial \(s\) is symmetric. Thus \(s\in B^d(4)\cap\Hit^d(P(4))\). Every \(\Psi\in K_d(4)\) annihilates \(\Hit^d(P(4))\), giving the final assertion.
\end{proof}

\begin{remark}\label{rem:correct-detector}
Proposition~\ref{prop:ordinary-kernel-incompatibility} is the reason the detector in the four-row descent must lie in \(\Rel_d(4)\), not in \(K_d(4)\) or in a local quotient \(K_\omega(4)/J_\omega(4)\). The ordinary local quotient remains useful for organizing same-weight terms in an ordinary hit calculation. It cannot, without an additional relative argument, detect the difference between ordinary and symmetric hit equations.
\end{remark}

\begin{example}\label{ex:relative-degree12}
Section~\ref{sec:computational-evidence} proves that \(\Crel_{12}(4)=0\). The monomial symmetric function \(m_{5,5,1,1}\) is ordinary hit and is represented by zero in \(\Crel_{12}(4)\); an explicit symmetric hit equation is given in \eqref{eq:d12-final}. In this degree every symmetric-kernel functional extends from the ordinary kernel, so there is no nonzero relative detector.
\end{example}

\section{Local descent and the relative four-row hypothesis}\label{sec:relative-descent}

Theorem~\ref{thm:intro-relative-duality} identifies the dual obstruction, but the global induction also requires local equations whose remaining terms have strictly smaller orbit measure. This section defines the required source-first square-preimage constructions, introduces a refined measure on four-row blocks, and states the uniform local hypothesis used in Theorem~\ref{thm:intro-conditional}.

\begin{definition}[The refined block measure]\label{def:measure}
For a four-row block \(F=F(a)\), let \(\lambda(F)\) be the nonincreasing rearrangement of \(a=(a_1,a_2,a_3,a_4)\). Define
\[
 \mmeasure(F)=\bigl(\omega(F),\alpha^*(F),\lambda(F)\bigr).
\]
The order on these triples is lexicographic: first the left lexicographic order on \(\omega\), then the ordinary lexicographic order on \(\alpha^*\), and finally the ordinary lexicographic order on \(\lambda\). For fixed degree \(d\), define
\[
 W^d_{\prec\mmeasure}
 =\operatorname{span}_{\F_2}
 \{x^G:\mmeasure(G)\prec\mmeasure,
          \ x^G\in\Hit^d(P(4))\}.
\]
We use the linear orbit-sum map \(\OrbSum\) defined in Definition~\ref{def:monomial-symmetric}.
\end{definition}

\begin{example}\label{ex:measure-tiebreak}
The source partition and the partition of the remaining single-coordinate Cartan terms in Example~\ref{ex:sq-monomial} satisfy
\[
 \omega(6,6,1,1)=\omega(6,5,2,1)=(2,2,2)
\]
and
\[
 \alpha^*(6,6,1,1)=\alpha^*(6,5,2,1)=(1,1,2,2).
\]
The first two coordinates of the measure therefore agree. Since
\[
 (6,5,2,1)<_{\mathrm{lex}}(6,6,1,1),
\]
the third coordinate places the associated orbit terms strictly below the target orbit. Thus \((\omega,\alpha^*)\) alone does not define a total induction parameter in four variables.
\end{example}

\begin{definition}[Source-first single-square preimage equation]\label{def:single-square-preimage}
Let \(C=(c_1,c_2,c_3,c_4)\), fix \(u\geq0\), and suppose that the binary digit of \(c_i\) in position \(2^u\) is \(1\). Put
\[
 F=C+2^u e_i,
\]
where \(e_i\) is the \(i\)-th standard basis vector. The associated source-first single-square preimage equation is the exact Cartan expansion
\[
 \Sq^{2^u}(x^C)
 =x^F+\sum_{j\neq i}\eps_jx^{C+2^u e_j}+L_{C,u},
 \tag{4.1}\label{eq:single-square-preimage}
\]
where
\[
 \eps_j=1
 \quad\Longleftrightarrow\quad
 \text{the binary digit of \(c_j\) in position \(2^u\) is \(1\)},
\]
and \(L_{C,u}\) is the sum of the residual Cartan summands in which at least two coordinates receive positive increments. A multi-square preimage construction is a finite sum of exact equations of this form, together with exact expansions \(\Sq^r(x^C)\) for general \(r>0\); every occurring increment is required to satisfy the digitwise Lucas conditions in the source exponents.
\end{definition}

\begin{lemma}\label{lem:single-square-exact}
Formula~\eqref{eq:single-square-preimage} is an exact equality in \(P(4)\).
\end{lemma}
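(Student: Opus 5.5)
We expand \(\Sq^{2^u}(x^C)\) with the Cartan formula \eqref{eq:cartan-monomial} and group the Cartan summands by how many coordinates receive a positive increment. By \eqref{eq:cartan-monomial},
\[
 \Sq^{2^u}(x^C)=\sum_{r_1+\cdots+r_4=2^u}\Bigl(\prod_{k=1}^4\binom{c_k}{r_k}\Bigr)x^{C+r},
\]
with coefficients reduced modulo \(2\). By Theorem~\ref{thm:lucas}, a summand survives exactly when \(r_k\subseteq_2 c_k\) for every \(k\). The surviving index vectors \(r\) split into two disjoint classes: those with exactly one nonzero entry, and those with at least two nonzero entries. No index vector has all entries zero, since \(2^u>0\).

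\textbf{Single-coordinate summands.} If \(r=2^ue_j\), the binomial product reduces to \(\binom{c_j}{2^u}\). By Lucas' theorem this is \(1\) exactly when the binary digit of \(c_j\) in position \(2^u\) equals \(1\), which is precisely the defining condition for \(\eps_j=1\).

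\begin{itemize}
\item For \(j=i\), the hypothesis on \(c_i\) gives coefficient \(1\), so the term \(x^{C+2^ue_i}=x^F\) appears.
\item For \(j\neq i\), the coefficient is \(\eps_j\), giving the terms \(\eps_jx^{C+2^ue_j}\).
\end{itemize}

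\textbf{Multi-coordinate summands.} The remaining surviving summands are exactly those with at least two positive increments, each weighted by its Lucas coefficient. Their sum is by definition \(L_{C,u}\).

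Since the two classes partition the index set and every monomial \(x^{C+r}\) is indexed by a unique \(r\), no term is counted twice. This gives equality \eqref{eq:single-square-preimage} exactly in \(P(4)\).

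The only point needing care is bookkeeping. Distinct \(r\) give distinct monomials, so no cancellation occurs between the named single-coordinate terms and \(L_{C,u}\). We must also interpret \(L_{C,u}\) as the sum of the Lucas-admissible multi-coordinate summands, consistent with the digitwise requirement in Definition~\ref{def:single-square-preimage}. Beyond this, the argument is a direct application of Theorem~\ref{thm:lucas}, and we expect no real obstacle.
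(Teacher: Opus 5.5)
Your proof is correct and follows the paper's argument essentially verbatim. Both expand by the Cartan formula, use Lucas' theorem to identify each single-coordinate coefficient \(\binom{c_j}{2^u}\) with the digit condition defining \(\eps_j\) (which equals \(1\) for \(j=i\)), and collect the compositions with at least two positive entries into \(L_{C,u}\). Your extra remark that distinct increment vectors give distinct monomials is harmless but not needed, since the identity is only a regrouping of the Cartan sum.
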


\begin{proof}
By \eqref{eq:cartan-monomial},
\[
 \Sq^{2^u}(x^C)=
 \sum_{r_1+r_2+r_3+r_4=2^u}
 \left(\prod_{t=1}^4\binom{c_t}{r_t}\right)
 x^{C+(r_1,r_2,r_3,r_4)}.
\]
For the increment tuple with \(r_i=2^u\) and \(r_t=0\) for \(t\neq i\), the coefficient is
\[
 \binom{c_i}{2^u}=1
\]
by the source-digit assumption and Lucas' theorem. This is the target term \(x^F\). For \(j\neq i\), the single-coordinate increment with \(r_j=2^u\) occurs precisely when \(\binom{c_j}{2^u}=1\), which is the condition defining \(\eps_j\). Every other composition of \(2^u\) has at least two positive entries, and the sum of the corresponding terms is \(L_{C,u}\). These classes of increment tuples are disjoint and exhaust all compositions of \(2^u\). Hence every Cartan summand occurs exactly once in \eqref{eq:single-square-preimage}.
\end{proof}

\begin{remark}[The source-first condition]\label{rem:source-first}
If the target \(F\) is specified first and one writes \(C=F-2^ue_i\), admissibility must be tested in the source exponent \(c_i\), not in the target exponent \(F_i\). Subtraction may cross a binary carry. For example, the least significant binary digit of the target exponent \(6\) is \(0\), whereas the source exponent \(5\) in Example~\ref{ex:sq-monomial} is odd and therefore satisfies \(\binom51=1\). A criterion imposed on the target digit would omit an actual Cartan summand.
\end{remark}

\begin{example}[A single-square preimage equation at weight \((2,2,2)\)]\label{ex:single-square-14}
Take
\[
 C=(5,6,1,1),
 \qquad u=0,
 \qquad i=1.
\]
Then \(F=(6,6,1,1)\), and Lemma~\ref{lem:single-square-exact} gives
\[
 \Sq^1(x_1^5x_2^6x_3x_4)
 =x_1^6x_2^6x_3x_4
  +x_1^5x_2^6x_3^2x_4
  +x_1^5x_2^6x_3x_4^2.
\]
All three ordered monomials have weight sequence \((2,2,2)\). The two remaining terms lie in the orbit of the partition \((6,5,2,1)\), which is lower than \((6,6,1,1)\) only in the exponent-partition coordinate of Definition~\ref{def:measure}. The source monomial is not fixed by the target stabilizer, so this equation cannot be orbit-summed without first imposing source-stabilizer compatibility.
\end{example}

The local assertion needed for monomials with repeated exponents can now be separated into an algebraic descent clause and a relative dual clause. The first clause requires source-stabilizer-fixed preimages; the second tests the residual ordinary hit against the relative detector space.

\begin{hypothesis}[Relative four-row descent]\label{hyp:relative-DE4}
Let \(d\) satisfy \(\mu(d)\leq4\), and let \(F\) be a degree-\(d\) four-row block such that
\[
 x^F\in\Hit^d(P(4)),
 \qquad
 \omega(F)\not<_l\omega_{\min}(d),
\]
and such that the exponent tuple represented by \(F\) has nontrivial stabilizer. Put \(\mmeasure=\mmeasure(F)\). There exist
\[
 h_B\in\Hit^d(B(4)),
 \qquad
 p_<\in W^d_{\prec\mmeasure},
 \qquad
 r_F\in B^d(4)\cap\Hit^d(P(4))
\]
such that
\[
 \sigma(F)=h_B+\OrbSum(p_<)+r_F.
 \tag{4.2}\label{eq:relative-DE4}
\]
The equality is obtained from a finite collection of single-square and multi-square preimage constructions whose Steenrod preimages are fixed by the relevant source stabilizers. In addition, the relative class of the residual term is orthogonal to every relative detector:
\[
 \langle[r_F]_{\mathrm{rel}},[\Theta]_{\mathrm{rel}}\rangle=0
 \qquad
 \text{for every }[\Theta]_{\mathrm{rel}}\in\Rel_d(4).
 \tag{4.3}\label{eq:relative-orthogonality}
\]
\end{hypothesis}

\begin{remark}\label{rem:hypothesis-content}
The three terms in \eqref{eq:relative-DE4} have distinct functions. The term \(h_B\) is a symmetric Steenrod image. The polynomial \(p_<\) is a sum of ordinary-hit monomials whose orbit measures are strictly smaller than \(\mmeasure(F)\), so its orbit sum is addressed by induction. Proposition~\ref{prop:ordinary-kernel-incompatibility} implies that the residual term \(r_F\) is ordinary hit. Condition~\eqref{eq:relative-orthogonality} is equivalent, by Theorem~\ref{thm:intro-relative-duality}, to the vanishing of \([r_F]_{\mathrm{rel}}\). When \(\Rel_d(4)=0\), as in degrees \(8\), \(12\), \(14\), and \(22\), the relative clause is automatic.
\end{remark}

\begin{remark}\label{rem:local-not-tautological}
Hypothesis~\ref{hyp:relative-DE4} does not assume that \(\sigma(F)\) is hit. It requires an explicit stabilizer-compatible descent to smaller orbit measures and finitely many scalar evaluations of the residual term against a basis of \(\Rel_d(4)\). The global conclusion is thereby reduced to exact Cartan expansions, stabilizer intersections, and finite-dimensional linear algebra in each relevant weight layer.
\end{remark}

\section{Stabilizer parity and equivariant obstructions in \texorpdfstring{\(\Sigma_4\)}{Sigma4}}\label{sec:parity}

Hypothesis~\ref{hyp:relative-DE4} passes from local source equations to \(\Sigma_4\)-invariant equations. This section determines the multiplicity with which each target orbit occurs under that passage. The resulting stabilizer-parity formula is applied to all five exponent-multiplicity types in four variables. Its source-fixedness requirement is the orbit-counting counterpart of the stabilizer containment condition used in Janfada's symmetrization criterion \cite[Proposition~2]{Janfada2011}.

\begin{definition}[Coset norm]\label{def:coset-norm}
Let \(G=\Sigma_4\), let \(H\leq G\), and let \(q\in P(4)^H\). Define
\[
 N_H(q)=\sum_{\eta\in H\backslash G}q^\eta.
\]
This does not depend on the chosen representatives. Indeed, if \(H\eta=H\eta'\), then \(\eta'=h\eta\) for some \(h\in H\), and
\[
 q^{\eta'}=q^{h\eta}=(q^h)^\eta=q^\eta.
\]
The polynomial \(N_H(q)\) is \(G\)-invariant. If \(u\) is a monomial with stabilizer \(H\), then
\[
 N_H(u)=\sigma(u).
\]
\end{definition}

\begin{example}\label{ex:coset-norm}
Let \(u=x_1^ax_2^ax_3^bx_4^c\), where \(a,b,c\) are pairwise distinct. Then \(H=\langle(12)\rangle\) and \(|H\backslash\Sigma_4|=12\). The norm \(N_H(u)\) is the sum of the twelve distinct monomials obtained by choosing the two positions carrying exponent \(a\) and then assigning \(b\) and \(c\) to the remaining positions.
\end{example}

\begin{lemma}[Coset parity identity]\label{lem:coset-parity}
Let \(u,v\in P^d(4)\) be monomials with stabilizers
\[
 H=\Stab_G(u),
 \qquad
 K=\Stab_G(v),
\]
and put \(L_0=H\cap K\). Choose a transversal \(T\subset H\) for the left cosets \(L_0\backslash H\). Suppose there are \(H\)-fixed polynomials
\[
 q_r\in P^{d-r}(4)^H\quad(r>0),
 \qquad
 L\in P^d(4)^H,
\]
such that
\[
 u+\sum_{h\in T}v^h+L
 =\sum_{r>0}\Sq^r(q_r).
 \tag{5.1}\label{eq:adapted-local}
\]
Then
\[
 \sigma(u)
 \equiv
 [K:H\cap K]\,\sigma(v)+N_H(L)
 \pmod{\Hit^d(B(4))},
 \tag{5.2}\label{eq:coset-parity}
\]
where the index is reduced modulo \(2\).
\end{lemma}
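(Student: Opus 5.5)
The plan is to apply the coset norm \(N_H\) of Definition~\ref{def:coset-norm} to both sides of \eqref{eq:adapted-local} and then identify each resulting term. Every term in \eqref{eq:adapted-local} is \(H\)-fixed: \(u\) is fixed by its own stabilizer, \(L\) and the \(q_r\) are fixed by hypothesis, and the middle sum will be checked below. So \(N_H\) is defined on each of them, and it is \(\F_2\)-linear.

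\textbf{Right-hand side.} The Steenrod action commutes with permutations of the variables \cite[Proposition~1.2.5]{WalkerWoodI}, so \(\Sq^r(q)^\eta=\Sq^r(q^\eta)\) for every \(\eta\in G\). Summing over a transversal of \(H\backslash G\) gives
\[
 N_H\bigl(\Sq^r(q_r)\bigr)=\Sq^r\bigl(N_H(q_r)\bigr).
\]
Since \(N_H(q_r)\in B^{d-r}(4)\) and \(r>0\), the image of the right-hand side lies in \(\Hit^d(B(4))\).

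\textbf{The terms \(u\) and \(L\).} Since \(u\) is a monomial with stabilizer exactly \(H\), Definition~\ref{def:coset-norm} gives \(N_H(u)=\sigma(u)\). The term \(N_H(L)\) is kept as it stands.

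\textbf{The middle term.} I would first observe that \(v^h\) depends only on the coset \(L_0h\). Indeed, if \(h'=\ell h\) with \(\ell\in L_0\subseteq K\), then \(v^{h'}=(v^\ell)^h=v^h\). It follows that the map \(L_0h\mapsto v^h\) is a bijection from \(L_0\backslash H\) onto the \(H\)-orbit of \(v\), because the stabilizer of \(v\) in \(H\) is exactly \(L_0\). Hence \(S:=\sum_{h\in T}v^h\) is the \(H\)-orbit sum of \(v\), which is \(H\)-fixed and independent of the choice of \(T\).

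Next, with \(R\) a transversal of \(H\backslash G\), the products \(h\eta\) with \(h\in T\) and \(\eta\in R\) form a transversal of \(L_0\backslash G\). Using \(v^{h\eta}=(v^h)^\eta\) for the right action, this gives
\[
 N_H(S)=\sum_{\eta\in R}\sum_{h\in T}v^{h\eta}
 =\sum_{g\in L_0\backslash G}v^g .
\]
Finally, I would group the cosets \(L_0g\) according to the larger cosets \(Kg\). Each \(K\)-coset splits into exactly \([K:L_0]\) cosets of \(L_0\), and all of them give the same monomial \(v^g\), because \(K\) fixes \(v\). Therefore
\[
 N_H(S)=[K:H\cap K]\sum_{g\in K\backslash G}v^g=[K:H\cap K]\,\sigma(v),
\]
with the coefficient read modulo \(2\).

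Assembling the three computations gives
\[
 \sigma(u)+[K:H\cap K]\,\sigma(v)+N_H(L)\in\Hit^d(B(4)),
\]
which is \eqref{eq:coset-parity} in characteristic \(2\). The only delicate step is the coset bookkeeping in the middle term. One must keep the right-action convention \((f^\eta)^\gamma=f^{\eta\gamma}\) consistent with the cosets \(L_0\backslash H\), \(H\backslash G\) and \(K\backslash G\), and confirm that the products \(h\eta\) give a genuine transversal of \(L_0\backslash G\) rather than of some other coset space. I expect no further obstacle.
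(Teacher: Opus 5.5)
Your proposal is correct and matches the paper's proof step for step: apply $N_H$ to both sides, commute it past $\Sq^r$ so the right-hand side becomes a symmetric hit, identify $N_H(u)=\sigma(u)$, rewrite the middle term as $\sum_{g\in L_0\backslash G}v^g$ via the bijection $T\times R\to L_0\backslash G$, and then collapse it onto $K\backslash G$ with fibers of size $[K:L_0]$. Two differences are minor. The paper writes out the injectivity and surjectivity of $(h,\eta)\mapsto L_0h\eta$, which you only assert. You add the check that $\sum_{h\in T}v^h$ is the $H$-orbit sum of $v$, which the paper does not need because it evaluates the norm with a fixed transversal $R$.
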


\begin{proof}
Choose a set \(R\subset G\) of representatives for \(H\backslash G\). Apply \(N_H\) to both sides of \eqref{eq:adapted-local}. Since each \(q_r\) is \(H\)-fixed, Definition~\ref{def:coset-norm} applies and gives \(N_H(q_r)\in B(4)\). The Steenrod action commutes with permutations, so
\[
 N_H(\Sq^r(q_r))
 =\sum_{\eta\in R}(\Sq^r(q_r))^\eta
 =\sum_{\eta\in R}\Sq^r(q_r^\eta)
 =\Sq^r\left(\sum_{\eta\in R}q_r^\eta\right)
 =\Sq^r(N_H(q_r)).
\]
Hence the norm of the right-hand side of \eqref{eq:adapted-local} belongs to \(\Hit^d(B(4))\). Also, \(N_H(u)=\sigma(u)\) and the lower term becomes \(N_H(L)\). It remains to evaluate the norm of the target sum.

The target contribution is
\[
 \sum_{\eta\in R}\sum_{h\in T}v^{h\eta}.
 \tag{5.3}\label{eq:double-target-sum}
\]
Consider the map
\[
 T\times R\longrightarrow L_0\backslash G,
 \qquad
 (h,\eta)\longmapsto L_0h\eta.
 \tag{5.4}\label{eq:TR-bijection}
\]
We prove that it is a bijection. Given \(g\in G\), there is a unique \(\eta\in R\) with \(Hg=H\eta\), so \(g=h'\eta\) for some \(h'\in H\). The left coset \(L_0h'\) has a unique representative \(h\in T\), and then \(L_0g=L_0h\eta\). This proves surjectivity. If
\[
 L_0h_1\eta_1=L_0h_2\eta_2,
\]
then \(H\eta_1=H\eta_2\), because \(L_0\subseteq H\). The choice of \(R\) gives \(\eta_1=\eta_2\). It then follows that \(L_0h_1=L_0h_2\), and the choice of \(T\) gives \(h_1=h_2\). Thus \eqref{eq:TR-bijection} is injective.

Using the bijection, \eqref{eq:double-target-sum} becomes
\[
 \sum_{g\in L_0\backslash G}v^g.
 \tag{5.5}\label{eq:L0-orbit-sum}
\]
Now consider the natural map
\[
 \pi:L_0\backslash G\longrightarrow K\backslash G,
 \qquad
 L_0g\longmapsto Kg.
\]
It is well-defined because \(L_0\subseteq K\). For each \(Kg\in K\backslash G\), its inverse image consists of the left \(L_0\)-cosets contained in \(Kg\), and therefore has cardinality
\[
 |L_0\backslash K|=[K:L_0]=[K:H\cap K].
\]
Since \(v\) is fixed by \(K\), every element of a fiber contributes the same ordered monomial. Therefore
\[
 \sum_{g\in L_0\backslash G}v^g
 =[K:H\cap K]
   \sum_{\rho\in K\backslash G}v^\rho
 =[K:H\cap K]\sigma(v).
\]
Substitution into the normed form of \eqref{eq:adapted-local} proves \eqref{eq:coset-parity}.
\end{proof}

The five possible equality patterns among four exponents and their stabilizers are summarized in the following table.
\[
\begin{array}{c|c|c|c}
\text{exponent pattern}&H&|H|&|H\backslash\Sigma_4|\\
\hline
1+1+1+1&1&1&24\\
2+1+1&\Sigma_2&2&12\\
2+2&\Sigma_2\times\Sigma_2&4&6\\
3+1&\Sigma_3&6&4\\
4&\Sigma_4&24&1.
\end{array}
\tag{5.6}\label{eq:stabilizer-table}
\]
We now work through each type and compute the index in Lemma~\ref{lem:coset-parity} explicitly.

\begin{example}[Type \((a,b,c,d)\)]\label{ex:parity-abcd}
Assume that \(a,b,c,d\) are pairwise distinct. Then \(H=1\), so \(L_0=1\) for every target stabilizer \(K\), and the transversal \(T\) consists only of the identity. Formula~\eqref{eq:coset-parity} becomes
\[
 \sigma(u)\equiv |K|\sigma(v)+N_1(L).
\]
If the target exponents are also pairwise distinct, then \(K=1\), the coefficient is \(1\), and the target orbit survives. If the target has any repeated exponent, then \(|K|\) is even, and the target orbit cancels. In particular, a source with pairwise distinct exponents cannot contribute a top orbit with repeated exponents through an unmodified full transfer. This is the parity phenomenon underlying Janfada's distinct-exponent argument.
\end{example}

\begin{example}[Type \((a,a,b,c)\)]\label{ex:parity-aabc}
Let
\[
 u=x_1^ax_2^ax_3^bx_4^c,
 \qquad
 H=\langle(12)\rangle,
\]
where \(a,b,c\) are pairwise distinct. Suppose first that \(v\) has four distinct exponents, so \(K=1\). Then \(L_0=1\) and one may take
\[
 T=\{1,(12)\}.
\]
The adapted top expression is \(v+v^{(12)}\). Since
\[
 [K:L_0]=[1:1]=1,
\]
its norm over \(H\backslash G\) produces every ordered monomial in \(\sigma(v)\) exactly once, and the target survives.

Suppose instead that \(K=\langle(34)\rangle\). The two subgroups act on disjoint pairs, so \(H\cap K=1\), and
\[
 [K:H\cap K]=2\equiv0\pmod2.
\]
The same double sum now runs through every ordered monomial in the target orbit twice. Hence the target cancels. This calculation shows that knowing \(|H|=|K|=2\) is insufficient; the intersection is decisive.
\end{example}

\begin{example}[Type \((a,a,b,b)\)]\label{ex:parity-aabb}
Let
\[
 u=x_1^ax_2^ax_3^bx_4^b,
 \qquad
 H=\langle(12),(34)\rangle.
\]
The source orbit is the six-term function \(m_{a,a,b,b}\). If a target \(v\) has distinct exponents, then \(K=1\), \(L_0=1\), and one may take \(T=H\). The four adapted top terms
\[
 v,
 \quad v^{(12)},
 \quad v^{(34)},
 \quad v^{(12)(34)}
\]
produce the full target orbit once after the outer norm is applied, because \([1:1]=1\).

If the target retains the second equal pair, so that \(K=\langle(34)\rangle\subset H\), then \(L_0=K\) and a transversal for \(K\backslash H\) is
\[
 T=\{1,(12)\}.
\]
The index is
\[
 [K:L_0]=[K:K]=1,
\]
so the target survives.

If instead \(K=\langle(13)\rangle\), then \(H\cap K=1\), and
\[
 [K:H\cap K]=2.
\]
The target cancels. Thus the \(2+2\) source pattern can either preserve or annihilate a two-element target stabilizer, depending on how that stabilizer is embedded in \(\Sigma_4\).
\end{example}

\begin{example}[Type \((a,a,a,b)\)]\label{ex:parity-aaab}
Let
\[
 u=x_1^ax_2^ax_3^ax_4^b,
 \qquad
 H=\Sigma_{\{1,2,3\}}.
\]
If \(v\) has distinct exponents, then \(K=1\), \(T=H\), and the target survives. If \(v\) retains a pair inside the triple, for example \(K=\langle(23)\rangle\subset H\), then \(L_0=K\), a transversal for \(K\backslash H\) has three elements, and
\[
 [K:L_0]=1.
\]
Again the target survives.

Consider a double-pair target with
\[
 K=\langle(12),(34)\rangle.
\]
Then
\[
 H\cap K=\langle(12)\rangle,
\]
and therefore
\[
 [K:H\cap K]=2.
\]
The double-pair target cancels. This is the first type in which a target with a larger stabilizer may intersect the source stabilizer nontrivially and still disappear by an even residual index.
\end{example}

\begin{example}[Type \((a,a,a,a)\)]\label{ex:parity-aaaa}
Here \(H=G=\Sigma_4\), so \(H\backslash G\) has one element and \(N_H\) is the identity on \(B(4)\). For a target stabilizer \(K\), one has \(L_0=K\), and the adapted top sum in \eqref{eq:adapted-local} is already
\[
 \sum_{h\in K\backslash G}v^h=\sigma(v).
\]
The index is \([K:K]=1\). Thus every useful local relation must already have \(G\)-fixed Steenrod preimages. There is no further averaging operation that can repair a nonsymmetric relation.
\end{example}

\begin{proposition}[Parity summary]\label{prop:parity-summary}
For every stabilizer-compatible relation of the form \eqref{eq:adapted-local}, the target orbit survives in \(B(4)\) if and only if
\[
 [K:H\cap K]
\]
is odd. If the index is even, every ordered monomial in the target orbit occurs an even number of times and cancels. The assertion applies to all five source stabilizer types in \eqref{eq:stabilizer-table}.
\end{proposition}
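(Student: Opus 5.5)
The plan is to derive the summary directly from the proof of Lemma~\ref{lem:coset-parity}, whose argument is uniform in the subgroup \(H\); the five examples following the lemma then serve only as illustrations. Fix a stabilizer-compatible relation \eqref{eq:adapted-local} with source stabilizer \(H\), target stabilizer \(K\) and \(L_0=H\cap K\). Apply the coset norm \(N_H\) to both sides. Because the preimages \(q_r\) are \(H\)-fixed and the Steenrod action commutes with permutations, the right-hand side becomes \(\sum_{r>0}\Sq^r(N_H(q_r))\in\Hit^d(B(4))\). Thus everything reduces to computing the normed target sum \eqref{eq:double-target-sum}.

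The key step is to make the multiplicity statement at the level of ordered monomials, not just orbit sums. By the bijection \eqref{eq:TR-bijection}, the double sum over \(T\times R\) equals \(\sum_{g\in L_0\backslash G}v^g\). The fibres of \(\pi:L_0\backslash G\to K\backslash G\) each have cardinality \([K:L_0]\). Moreover, \(Kg\mapsto v^g\) is injective, since \(K\) is exactly the stabilizer of \(v\). Hence each ordered monomial \(v^\rho\) of the orbit of \(v\) occurs exactly \([K:H\cap K]\) times in the double sum. This gives the precise form of the second sentence of the statement: when the index is even, every ordered monomial of the target orbit appears an even number of times and cancels over \(\F_2\). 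When the index is odd, the contribution is exactly \(\sigma(v)\), which is nonzero in \(B(4)\) because it is a basis element \(m_\lambda\). Combining this with \eqref{eq:coset-parity} yields the equivalence, with "survives" interpreted as "the normed relation contributes the orbit \(\sigma(v)\) with coefficient \(1\)".

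For the final sentence, I would note that nothing in the argument used the particular type of \(H\). The bijection \eqref{eq:TR-bijection} and the fibre count hold for any subgroup \(H\le\Sigma_4\) and any \(K\). It therefore applies verbatim to the five stabilizers in \eqref{eq:stabilizer-table}. Examples~\ref{ex:parity-abcd}--\ref{ex:parity-aaaa} can be cited as checks of both parities.

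The main obstacle is interpretive rather than computational. "Survives" must be read as a statement about the coefficient of \(\sigma(v)\) in the normed relation, not about whether the class of \(\sigma(v)\) is nonzero in \(Q_d(B(4))\). Other terms, namely \(N_H(L)\) or the norms of other targets, could in principle contain the same orbit. I would make this precise by stating that the contribution of the adapted block \(\sum_{h\in T}v^h\) is counted separately. If \(L\) is assumed to avoid the orbit of \(v\), as in the adapted setup, the coefficient of \(m_\lambda\) in the normed equation is exactly \([K:H\cap K]\bmod 2\).
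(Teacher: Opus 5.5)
Your proposal is correct and follows essentially the same route as the paper, whose proof simply identifies the survival criterion with Lemma~\ref{lem:coset-parity} and cites table~\eqref{eq:stabilizer-table} and Examples~\ref{ex:parity-abcd}--\ref{ex:parity-aaaa} for the five source types. Beyond that, you make explicit several points the paper leaves implicit: the injectivity of \(Kg\mapsto v^g\), which gives the ordered-monomial multiplicity \([K:H\cap K]\) claimed in the second sentence; the uniformity of the lemma in \(H\), so that the final sentence needs no case check; and the caveat that ``survives'' refers to the coefficient contributed by the adapted block rather than to the class in \(Q_d(B(4))\).
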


\begin{proof}
The survival criterion is exactly Lemma~\ref{lem:coset-parity}. The table \eqref{eq:stabilizer-table} exhausts the set partitions of four row positions, and Examples~\ref{ex:parity-abcd}--\ref{ex:parity-aaaa} compute representative intersections for every resulting source type.
\end{proof}

\section{Synthesis: the global inductive reduction}\label{sec:induction}

The preceding sections provide the three ingredients needed for the conditional global theorem: the symmetric lower-spike theorem, Janfada's result for trivial stabilizers, and Hypothesis~\ref{hyp:relative-DE4} for nontrivial stabilizer configurations. This section combines them in a finite lexicographic induction on the refined block measure and proves Theorem~\ref{thm:intro-conditional}.

\begin{definition}[Inductive assertion]\label{def:inductive-property}
Fix a degree \(d\) and a block measure \(\mmeasure\). Let \(\mathcal P_d(\mmeasure)\) be the assertion that every degree-\(d\) block \(F\) with
\[
 \mmeasure(F)\preceq\mmeasure
\]
satisfies
\[
 x^F\in\Hit^d(P(4))
 \quad\Longrightarrow\quad
 \sigma(F)\in\Hit^d(B(4)).
\]
\end{definition}

\begin{example}\label{ex:inductive-property}
If \(\mmeasure_0\) is the measure of the orbit \((6,6,1,1)\) in degree \(14\), then \(\mathcal P_{14}(\mmeasure_0)\) simultaneously asserts the desired implication for every ordinary-hit orbit whose weight is below \((2,2,2)\), for every orbit of weight \((2,2,2)\) with smaller row-sum data, and, when the first two coordinates tie, for every orbit with exponent partition lexicographically below \((6,6,1,1)\).
\end{example}

\begin{lemma}[The lower-spike base]\label{lem:induction-base}
Let \(F\) be a degree-\(d\) block with \(\mu(d)\leq4\). If
\[
 \omega(F)<_l\omega_{\min}(d),
\]
then \(\sigma(F)\in\Hit^d(B(4))\).
\end{lemma}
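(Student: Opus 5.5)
The plan is to obtain this as a direct specialization of the final clause of Theorem~\ref{thm:peterson-lower-spike}, taking \(n=4\) and \(G=\Sigma_4\). First, I will match notation: by Definition~\ref{def:block}, \(\sigma(F)=\sigma(x^F)\) and \(\omega(F)=\omega(x^F)\). Also, \(P(4)^{\Sigma_4}=B(4)\), so \(\sigma_G(x^F)\) with \(G=\Sigma_4\) is exactly \(\sigma(F)\). Then \(x^F\) is a degree-\(d\) monomial in \(P(4)\), and the hypotheses \(\mu(d)\leq 4\) and \(\omega(F)<_l\omega_{\min}(d)\) are precisely those of the symmetric minimal-spike assertion. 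That assertion gives \(\sigma_{\Sigma_4}(x^F)\in\Hit(P(4)^{\Sigma_4})\), hence \(\sigma(F)\in\Hit^d(B(4))\), since the hit subspace is graded and \(\sigma(F)\) is homogeneous of degree \(d\).

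Two small points need checking. First, \(\omega_{\min}(d)\) must be well-defined. Since \(\mu(d)\leq4\), degree-\(d\) spikes in four variables exist. Their weight sequences form a finite set of sequences with \(2\)-degree \(d\), so a least element in the left order exists, as in Definition~\ref{def:spikes}. Second, the orbit sum in \cite[Proposition~25.1.5]{WalkerWoodII} must be the orbit sum over \(\Stab\backslash\Sigma_4\) and not the full transfer. This is how Theorem~\ref{thm:peterson-lower-spike} is stated, since there \(\sigma_G\) is the orbit sum of Definition~\ref{def:orbit-sum}. So no stabilizer-parity issue of the kind in Section~\ref{sec:parity} arises.

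The main obstacle lies in the cited theorem rather than in the reduction itself. That result is usually proved by showing that a monomial whose weight lies below the minimal spike weight is hit modulo monomials of even smaller weight, with preimages that can be chosen symmetrically. This is then combined with a downward induction on \(\omega\), which terminates because there are finitely many weight sequences of degree \(d\). If a self-contained argument were wanted, I would follow that scheme. The nontrivial step is producing \(\Sigma_4\)-compatible preimages. I would attempt it via the \(\chi\)-trick and the Kameko-type reduction \(\Sq^{k}\) applied to the orbit sum of the truncated block. Since the paper takes the cited theorem as given, the proof here is just the specialization above.
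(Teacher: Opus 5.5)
Your proposal is correct and matches the paper's proof, which likewise just applies the final assertion of Theorem~\ref{thm:peterson-lower-spike} with \(G=\Sigma_4\) and notes that no stabilizer assumption on \(F\) is needed. Your remarks on how the cited Walker--Wood result might be proved are not needed for this lemma.
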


\begin{proof}
Apply the final assertion of Theorem~\ref{thm:peterson-lower-spike} with \(G=\Sigma_4\). No assumption on the stabilizer of \(F\) is needed.
\end{proof}

\begin{lemma}[The trivial-stabilizer step]\label{lem:induction-distinct}
Let \(F\) have pairwise distinct row exponents. If \(x^F\in\Hit^d(P(4))\), then \(\sigma(F)\in\Hit^d(B(4))\).
\end{lemma}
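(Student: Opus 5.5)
This lemma is essentially Janfada's distinct-exponent theorem (Theorem~\ref{thm:janfada-distinct}) restated in block notation. The plan is simply to translate hypotheses and conclusion and invoke that theorem with \(n=4\).

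First I will observe that pairwise distinct row exponents of \(F\) means the exponent tuple \(a\) with \(F=F(a)\) has pairwise distinct entries. Hence \(\Stab(F)=\Stab_{\Sigma_4}(x^a)\) is trivial, since any non-identity permutation moves some coordinate \(i\) to \(j\neq i\), and \(a_i\neq a_j\). Consequently the orbit sum of Definition~\ref{def:orbit-sum}, taken over \(\Stab(F)\backslash\Sigma_4\), is the sum over all of \(\Sigma_4\). This shows that \(\sigma(F)\) coincides with the symmetrization \(\sum_{\eta\in\Sigma_4}(x^F)^\eta\) appearing in Theorem~\ref{thm:janfada-distinct}, and also with the full transfer \(\operatorname{Tr}_{\Sigma_4}(x^F)=|1|\,\sigma(x^F)\). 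It also settles the coset-convention point noted after Theorem~\ref{thm:janfada-distinct}: Janfada sums over left cosets of the stabilizer, while the paper uses right-action coset representatives, but with trivial stabilizer both index the whole group.

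Next, since \(x^F\in\Hit^d(P(4))\) by hypothesis, Theorem~\ref{thm:janfada-distinct} applies and yields that \(\sum_{\eta}(x^F)^\eta=\sigma(F)\) is hit in \(B(4)\). Because this element is homogeneous of degree \(d\), it lies in \(\Hit^d(B(4))\).

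As a consistency check, one could note the alternative self-contained route in the trivial-stabilizer case. Write \(x^F=\sum_{r>0}\Sq^r(q_r)\) with \(q_r\in P^{d-r}(4)\), then apply \(N_1=\operatorname{Tr}_{\Sigma_4}\), using that Steenrod squares commute with permutations as in the proof of Lemma~\ref{lem:coset-parity}. This gives \(\sigma(F)=\sum_r\Sq^r(\operatorname{Tr}_{\Sigma_4}q_r)\) with symmetric preimages. That is exactly Lemma~\ref{lem:coset-parity} with \(H=1\), \(L=0\) and no target term, and it makes the lemma essentially immediate even without citing Janfada. There is no real obstacle here. The only care needed is the identification of \(\sigma(F)\) with the full symmetrization, which is what requires the trivial-stabilizer hypothesis.
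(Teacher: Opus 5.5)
Your proposal is correct and matches the paper, whose proof consists of noting that the row exponents of \(F\) are the exponents of \(x^F\) and invoking Theorem~\ref{thm:janfada-distinct} with \(n=4\). Your supplementary transfer argument is also valid as written; strictly, it is the computation inside the proof of Lemma~\ref{lem:coset-parity} (applying \(N_1=\operatorname{Tr}_{\Sigma_4}\)) rather than a literal instance of that lemma, since the lemma's hypothesis always includes a target monomial \(v\).
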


\begin{proof}
The row exponents are the exponents of the corresponding monomial, so the assertion is Theorem~\ref{thm:janfada-distinct} with \(n=4\).
\end{proof}

\begin{lemma}[The nontrivial-stabilizer step]\label{lem:induction-nontrivial-stabilizer}
Assume Hypothesis~\ref{hyp:relative-DE4} in degree \(d\). Let \(F\) be a degree-\(d\) block whose exponent tuple has nontrivial stabilizer, and suppose that
\[
 x^F\in\Hit^d(P(4)),
 \qquad
 \omega(F)\not<_l\omega_{\min}(d).
\]
If \(\mathcal P_d(\mmeasure')\) holds for every \(\mmeasure'\prec\mmeasure(F)\), then \(\sigma(F)\in\Hit^d(B(4))\).
\end{lemma}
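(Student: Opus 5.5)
The plan is to apply Hypothesis~\ref{hyp:relative-DE4} directly to \(F\) and then dispose of the three terms of \eqref{eq:relative-DE4} one at a time. The hypotheses of the lemma are exactly those of Hypothesis~\ref{hyp:relative-DE4}: \(\mu(d)\leq4\) is part of the standing assumption in degree \(d\), and we also have \(x^F\) ordinary hit, \(\omega(F)\not<_l\omega_{\min}(d)\), and a nontrivial stabilizer. Hence there exist \(h_B\in\Hit^d(B(4))\), \(p_<\in W^d_{\prec\mmeasure}\) with \(\mmeasure=\mmeasure(F)\), and \(r_F\in B^d(4)\cap\Hit^d(P(4))\) such that
\[
 \sigma(F)=h_B+\OrbSum(p_<)+r_F.
\]
Since \(\Hit^d(B(4))\) is a subspace, it suffices to show that each of the three summands lies in it. For \(h_B\) this holds by construction.

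For the residual term, note that \(r_F\) is symmetric and ordinary hit, so it defines a class \([r_F]_{\mathrm{rel}}\in\Crel_d(4)\). Condition \eqref{eq:relative-orthogonality} says this class pairs to zero with every element of \(\Rel_d(4)\). By the perfect pairing of Theorem~\ref{thm:intro-relative-duality}, or equivalently by Corollary~\ref{cor:relative-detection}, this gives \([r_F]_{\mathrm{rel}}=0\), that is, \(r_F\in\Hit^d(B(4))\).

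For the middle term, I would write \(p_<=\sum_G c_G\,x^G\) as a finite \(\F_2\)-combination of the spanning monomials of \(W^d_{\prec\mmeasure}\). Each \(x^G\) occurring is ordinary hit and satisfies \(\mmeasure(G)\prec\mmeasure(F)\). Because \(\OrbSum\) is linear, \(\OrbSum(p_<)=\sum_G c_G\,\sigma(x^G)\). For each such \(G\), apply the inductive assumption \(\mathcal P_d(\mmeasure(G))\); this is legitimate because \(\mmeasure(G)\prec\mmeasure(F)\). Since \(\mmeasure(G)\preceq\mmeasure(G)\) and \(x^G\) is ordinary hit, we get \(\sigma(x^G)\in\Hit^d(B(4))\). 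Summing, \(\OrbSum(p_<)\in\Hit^d(B(4))\), and adding the three pieces gives \(\sigma(F)\in\Hit^d(B(4))\).

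The only point needing care, and the step I expect to be the main obstacle, is the bookkeeping for the middle term. One must check that every monomial \(x^G\) in \(p_<\) is a degree-\(d\) block to which \(\mathcal P_d\) actually applies, and that \(\mmeasure(G)\) is computed from the ordered exponent tuple in a way invariant under permutation. This invariance holds because \(\omega\), \(\alpha^*\) and \(\lambda\) are all symmetric functions of the row multiset, so \(\sigma(x^G)\) depends only on the orbit and the induction over \(\prec\) is well-founded. It is a finite total preorder on degree-\(d\) data. Beyond these checks, no new computation is required.
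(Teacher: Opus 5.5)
Your proposal is correct and follows the paper's proof essentially step for step. It invokes Hypothesis~\ref{hyp:relative-DE4} to obtain the exact decomposition \(\sigma(F)=h_B+\OrbSum(p_<)+r_F\), disposes of \(\OrbSum(p_<)\) by the induction hypothesis together with linearity of \(\OrbSum\), and shows \(r_F\in\Hit^d(B(4))\) because the perfect pairing of Theorem~\ref{thm:intro-relative-duality} forces \([r_F]_{\mathrm{rel}}=0\).
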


\begin{proof}
Set \(\mmeasure=\mmeasure(F)\). Hypothesis~\ref{hyp:relative-DE4} provides an exact equality
\[
 \sigma(F)=h_B+\OrbSum(p_<)+r_F,
 \tag{6.1}\label{eq:induction-decomposition}
\]
where
\[
 h_B\in\Hit^d(B(4)),
 \qquad
 p_<\in W^d_{\prec\mmeasure},
 \qquad
 r_F\in B^d(4)\cap\Hit^d(P(4)).
\]
By the definition of \(W^d_{\prec\mmeasure}\), there are finitely many degree-\(d\) monomials \(x^{G_j}\) such that
\[
 p_<=\sum_{j=1}^s x^{G_j},
 \qquad
 \mmeasure(G_j)\prec\mmeasure,
 \qquad
 x^{G_j}\in\Hit^d(P(4)).
\]
The induction hypothesis applies to every \(G_j\), and gives
\[
 \sigma(G_j)\in\Hit^d(B(4)).
\]
By linearity of \(\OrbSum\),
\[
 \OrbSum(p_<)=\sum_{j=1}^s\sigma(G_j)
 \in\Hit^d(B(4)).
 \tag{6.2}\label{eq:lower-orbits-hit}
\]

It remains to treat the residual. The class of \(r_F\) is an element of \(\Crel_d(4)\). Condition~\eqref{eq:relative-orthogonality} states that
\[
 \langle[r_F]_{\mathrm{rel}},[\Theta]_{\mathrm{rel}}\rangle=0
\]
for every \([\Theta]_{\mathrm{rel}}\in\Rel_d(4)\). By the perfect pairing of Theorem~\ref{thm:intro-relative-duality}, the only class with this property is the zero class. Hence
\[
 [r_F]_{\mathrm{rel}}=0.
\]
Using the quotient description \eqref{eq:relative-cohit}, this equality is equivalent to
\[
 r_F\in\Hit^d(B(4)).
 \tag{6.3}\label{eq:residual-hit}
\]
Each of the three terms on the right-hand side of \eqref{eq:induction-decomposition} is therefore a symmetric hit, by definition, by \eqref{eq:lower-orbits-hit}, and by \eqref{eq:residual-hit}, respectively. Thus \(\sigma(F)\in\Hit^d(B(4))\).
\end{proof}

\begin{proof}[Proof of Theorem~\ref{thm:intro-conditional}]
Let \(x^F\in P^d(4)\) be an ordinary hit monomial. If \(\mu(d)>4\), then the symmetric Peterson theorem in Theorem~\ref{thm:peterson-lower-spike} gives
\[
 Q_d(B(4))=0.
\]
Every degree-\(d\) symmetric polynomial is then symmetrically hit, in particular \(\sigma(F)\).

Assume now that \(\mu(d)\leq4\). The set of degree-\(d\) exponent partitions is finite, the set of degree-\(d\) binary weight sequences is finite, and the set of row-sum sequences is finite. Hence the lexicographic order on the measures \(\mmeasure(F)\) is a well-order on the finite set of degree-\(d\) orbit types. We prove \(\mathcal P_d(\mmeasure)\) by induction on this order.

Let \(F\) be a degree-\(d\) block with \(x^F\) ordinary hit, and assume the assertion has been proved for every strictly lower measure. If
\[
 \omega(F)<_l\omega_{\min}(d),
\]
then Lemma~\ref{lem:induction-base} proves the result. Suppose therefore that \(\omega(F)\not<_l\omega_{\min}(d)\). If the row exponents are pairwise distinct, Lemma~\ref{lem:induction-distinct} applies. If the exponent tuple has nontrivial stabilizer, Hypothesis~\ref{hyp:relative-DE4} and Lemma~\ref{lem:induction-nontrivial-stabilizer} apply. These cases exhaust all blocks. The induction proves that \(\sigma(F)\) is symmetrically hit for every ordinary hit monomial \(x^F\), as claimed.
\end{proof}

\section{An unconditional non-lower-spike Kameko tower}\label{sec:tower}

The global reduction leaves a local problem for nontrivial stabilizers at or above the minimal-spike weight. This section proves Theorem~\ref{thm:intro-kameko-tower} for an infinite family not decided by the lower-spike theorem. The degree-\(22\) base is obtained by exact Cartan enumeration, and the family is propagated by the ordinary and symmetric Kameko isomorphisms. We also prove that these isomorphisms preserve the relative cohit space.

\begin{proposition}[Ordinary and symmetric Kameko isomorphisms]\label{prop:kameko-isomorphisms}
Let
\[
 c_n=x_1\cdots x_n
\]
and define the up Kameko map
\[
 \upsilon_n:P^d(n)\longrightarrow P^{2d+n}(n),
 \qquad
 \upsilon_n(f)=c_nf^2.
\]
If \(\mu(2d+n)=n\), then \(\upsilon_n\) induces an isomorphism
\[
 Q_d(P(n))\xrightarrow{\cong}Q_{2d+n}(P(n)).
\]
For every permutation group \(G\leq\Sigma_n\), it also induces an isomorphism
\[
 Q_d(P(n)^G)\xrightarrow{\cong}Q_{2d+n}(P(n)^G).
\]
\end{proposition}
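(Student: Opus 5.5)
The plan is to follow Kameko's original argument, phrased through the down Kameko map, and to check at each stage that the constructions are compatible with the permutation action, so that the symmetric statement comes along at no extra cost. Let \(\kappa:P^{2d+n}(n)\to P^d(n)\) be the linear map sending \(x^b\) to \(x^{(b-\mathbf 1)/2}\) when every \(b_i\) is odd, and to \(0\) otherwise. Then \(\kappa\circ\upsilon_n=\mathrm{id}\). Both \(\upsilon_n\) and \(\kappa\) commute with every permutation of the variables, since \(c_n\) is \(\Sigma_n\)-invariant and the condition "all exponents odd" is permutation-stable. Consequently both maps restrict to \(P(n)^G\to P(n)^G\).

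\textbf{Step 1: both maps respect hits.} For \(\kappa\) I use the standard identities \(\kappa(\Sq^{2r+1}f)=0\) and \(\kappa(\Sq^{2r}f)=\Sq^r\kappa(f)\). These follow from the Cartan formula written as \(\Sq(c_ng^2)=\Sq(c_n)\,\Sq(g)^2\) on the summand of \(f\) that has odd exponents, together with the observation that a monomial with some even exponent contributes nothing with all exponents odd after applying \(\Sq^{2r}\) beyond the \(\kappa\)-compatible term. I will verify this by a Lucas parity count on each coordinate. It follows that \(\kappa(\Hit(P(n)))\subseteq\Hit(P(n))\). Because \(\kappa\) is equivariant, applying it to a symmetric hit equation \(\sum\Sq^r(q_r)\) with \(q_r\in P(n)^G\) produces an equation whose preimages \(\kappa(q_r)\) again lie in \(P(n)^G\). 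Hence \(\kappa\) induces surjections \(Q_{2d+n}\to Q_d\) in both settings, and \(\upsilon_n\) is a section of each.

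\textbf{Step 2: injectivity of \(\kappa\) on cohits.} This is where the hypothesis \(\mu(2d+n)=n\) is used. I need to show that \(\ker\kappa\subseteq\Hit\), that is, that every monomial \(x^b\) of degree \(2d+n\) having some even exponent is hit, and in the invariant case that \(\sigma_G(x^b)\) is hit in \(P(n)^G\). Such a monomial has fewer than \(n\) entries equal to \(1\) in its first block column, so \(\omega_1(b)<n\). On the other hand, \(\mu(2d+n)=n\) together with \eqref{eq:mu-alpha} forces \(\omega_{\min}(2d+n)\) to start with \(n\): a minimal spike with \(n\) positive parts has all \(r_i>0\). Therefore \(\omega(b)<_l\omega_{\min}(2d+n)\). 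The minimal-spike statement, the last part of Theorem~\ref{thm:peterson-lower-spike}, then gives that \(x^b\) is hit, by the case \(G=1\), and that \(\sigma_G(x^b)\) is hit in \(P(n)^G\). Since \(\ker\kappa\cap P(n)^G\) is spanned by such orbit sums, because the condition is permutation-stable, it lies in \(\Hit(P(n)^G)\).

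\textbf{Step 3: conclusion.} Combining the two steps, \(\kappa\) induces a bijection \(Q_{2d+n}\to Q_d\) whose inverse is induced by \(\upsilon_n\). This holds for \(P(n)\) and for every \(P(n)^G\).

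The main obstacle is the commutation formula in Step 1, which says that \(\kappa\) intertwines \(\Sq^{2r}\) with \(\Sq^r\) and kills odd squares. I will first verify it on monomials using \eqref{eq:cartan-monomial} and Lucas' theorem, coordinate by coordinate. If that proves awkward, I will cite Kameko's lemma in the form given by Walker and Wood, \cite{WalkerWoodI}, and then rely on equivariance to transfer it to \(P(n)^G\).
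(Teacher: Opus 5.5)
Your argument is correct, but it takes a different route from the paper. The paper gives no argument of its own here. It cites \cite[Proposition~6.5.3]{WalkerWoodI} for \(P(n)\) and \cite[Proposition~25.1.9]{WalkerWoodII} for \(P(n)^G\). Its only addition is that \(c_n\) is symmetric, so \(\upsilon_n\) restricts to invariants, which shows the map is defined but says nothing about hits in \(P(n)^G\).

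You instead reconstruct Kameko's argument through the down map \(\kappa\), in four ingredients:
\begin{itemize}
\item The identities \(\kappa\Sq^{2r}=\Sq^r\kappa\) and \(\kappa\Sq^{2r+1}=0\). These do hold by your Lucas count: an even exponent stays even under every \(\Sq^j\). On an odd exponent \(2c_i+1\), only even increments \(2s_i\) with \(s_i\subseteq_2 c_i\) land on odd exponents.
\item Equivariance of \(\kappa\) and \(\upsilon_n\).
\item The fact that \(\ker\kappa\) is spanned by monomials, and \(\ker\kappa\cap P(n)^G\) by \(G\)-orbit sums, whose first weight entry is below \(n\), the first entry of \(\omega_{\min}(2d+n)\).
\item The lower-spike part of Theorem~\ref{thm:peterson-lower-spike}, which kills all of these.
\end{itemize}
This makes the invariant Kameko isomorphism a formal consequence of the symmetric minimal-spike theorem the paper already imports, and it shows exactly where \(\mu(2d+n)=n\) enters. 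The paper's route is shorter but leans on a second external result.

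Two small points need tightening.

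First, in Step~1 you cannot yet call \(\upsilon_n\) a section on cohits, because without the hypothesis \(\upsilon_n\) does not carry hits to hits. For example, \(\upsilon_3(\Sq^1x_1)=x_1^5x_2x_3\) is not hit in \(P(3)\): the \(\A\)-algebra map \(x_i\mapsto x\) sends it to the spike \(x^7\in P(1)\). Well-definedness of \(\upsilon_n\) on cohits should instead come out of Step~3. Once \(\bar\kappa\) is bijective, \(\bar\kappa[\upsilon_n g]=[g]\) forces \(\bar\kappa^{-1}[g]=[\upsilon_n g]\).

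Second, \(\ker\kappa\subseteq\Hit\) gives injectivity of \(\bar\kappa\) only after a lift. If \(\kappa(y)=\sum_{r>0}\Sq^r(q_r)\), then \(y+\sum_{r>0}\Sq^{2r}(\upsilon_n q_r)\in\ker\kappa\). Moreover \(\upsilon_n q_r\in P(n)^G\) whenever \(q_r\in P(n)^G\), so the lift stays in the invariant setting. Both fixes are one line using identities you already have.
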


\begin{proof}
The ordinary assertion is \cite[Proposition~6.5.3]{WalkerWoodI}. The invariant assertion is \cite[Proposition~25.1.9]{WalkerWoodII}. The polynomial \(c_n\) is symmetric, and squaring commutes with permutations, so \(\upsilon_n\) restricts to every permutation-invariant subalgebra.
\end{proof}

\begin{proposition}[Relative Kameko stability]\label{prop:relative-kameko}
Let \(D=2d+n\), and assume that \(\mu(D)=n\). The up Kameko map induces a natural isomorphism
\[
 \upsilon_n^{\mathrm{rel}}:\Crel_d(n)
 \xrightarrow{\cong}
 \Crel_D(n).
\]
\end{proposition}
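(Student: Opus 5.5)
The plan is to treat \(\Crel\) as the kernel of \(\bar\iota\) and use naturality of the Kameko maps with respect to the inclusion \(B(n)\hookrightarrow P(n)\). Since \(c_n\) is symmetric and squaring commutes with permutations, \(\upsilon_n(B^d(n))\subseteq B^D(n)\). We therefore have the commutative square
\[
 \iota_D\circ\upsilon_n|_{B^d(n)}=\upsilon_n\circ\iota_d .
\]
Next we need both \(\upsilon_n\)'s to respect hits. Rather than checking this directly, we rely on Proposition~\ref{prop:kameko-isomorphisms}. That result asserts that \(\upsilon_n\) induces maps on cohits, namely the isomorphisms
\[
 \bar\upsilon^P:Q_d(P(n))\to Q_D(P(n)),
 \qquad
 \bar\upsilon^B:Q_d(B(n))\to Q_D(B(n)),
\]
with the latter obtained by taking \(G=\Sigma_n\). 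It follows that the square descends to quotients:
\[
 \bar\iota_D\circ\bar\upsilon^B=\bar\upsilon^P\circ\bar\iota_d .
\]

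From the commutative square of quotients with horizontal isomorphisms, the kernels correspond. If \([b]\in\ker\bar\iota_d\), then
\[
 \bar\iota_D(\bar\upsilon^B[b])=\bar\upsilon^P(0)=0,
\]
so \(\bar\upsilon^B\) maps \(\Crel_d(n)\) into \(\Crel_D(n)\). This map is injective because \(\bar\upsilon^B\) is. For surjectivity, take \([b']\in\Crel_D(n)\) and write \([b']=\bar\upsilon^B[b]\). Then
\[
 \bar\upsilon^P\bar\iota_d[b]=\bar\iota_D[b']=0,
\]
and since \(\bar\upsilon^P\) is injective, \([b]\in\ker\bar\iota_d\). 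This is just the five-lemma-type statement that an isomorphism of arrows restricts to an isomorphism of kernels. We define \(\upsilon_n^{\mathrm{rel}}\) as this restriction; concretely it is given by \([b]\mapsto[c_nb^2]\) on representatives \(b\in B^d(n)\cap\Hit^d(P(n))\). Naturality is then inherited from the naturality of \(\bar\upsilon^B\) and \(\bar\upsilon^P\).

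The only delicate point, and the one I expect to be the main obstacle, is confirming that the isomorphisms cited in Proposition~\ref{prop:kameko-isomorphisms} are genuinely induced by the same formula \(f\mapsto c_nf^2\) in both settings. This matters because the literature often states the Kameko isomorphism via the down map \(\kappa\). Under \(\mu(D)=n\), the down Kameko map is inverse to \(\upsilon_n\) on cohits, and both \(\upsilon_n\) and its inverse preserve hit subspaces. I would check hit-preservation directly where needed, using
\[
 \upsilon_n(\Sq^{r}f)=c_n(\Sq^rf)^2=c_n\Sq^{2r}(f^2),
\]
together with the Cartan formula in which \(\Sq^{2r}(c_nf^2)\) differs from \(c_n\Sq^{2r}(f^2)\) by hit terms. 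Alternatively, I would simply invoke the cited statements in their "induced by \(\upsilon_n\)" form. Once compatibility is settled, the kernel argument is formal.
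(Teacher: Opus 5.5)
Your proposal is correct and is essentially the paper's proof: the commutative square $\bar\iota_D\circ\bar\upsilon^B=\bar\upsilon^P\circ\bar\iota_d$ comes from Proposition~\ref{prop:kameko-isomorphisms}, the restricted map is injective because $\bar\upsilon^B$ is, and it is surjective by surjectivity of $\bar\upsilon^B$ together with injectivity of $\bar\upsilon^P$. Your optional direct hit-preservation check is not needed. If you do carry it out, note that the correction terms $c_n e_{2k}(\Sq^{r-k}f)^2$ ($k\geq1$) have $\omega_1<n$, whereas $\mu(D)=n$ forces $\omega_1(\omega_{\min}(D))=n$; they are hit only because they lie below the minimal spike, so that the (symmetric) lower-spike theorem applies.
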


\begin{proof}
Let
\[
 \overline\upsilon_n^P:Q_d(P(n))\longrightarrow Q_D(P(n))
\]
and
\[
 \overline\upsilon_n^B:Q_d(B(n))\longrightarrow Q_D(B(n))
\]
denote the maps induced by \(\upsilon_n(f)=c_nf^2\). Proposition~\ref{prop:kameko-isomorphisms} states that both maps are isomorphisms. For every \(b\in B^d(n)\), the inclusion \(B(n)\hookrightarrow P(n)\) satisfies
\[
 \iota_D\bigl(\upsilon_n(b)\bigr)
 =c_n\iota_d(b)^2
 =\upsilon_n\bigl(\iota_d(b)\bigr).
\]
Passing to cohit quotients gives the commutative identity
\[
 \bar\iota_D\circ\overline\upsilon_n^B
 =\overline\upsilon_n^P\circ\bar\iota_d.
 \tag{7.1}\label{eq:relative-kameko-square}
\]
If \([b]\in\Crel_d(n)=\ker\bar\iota_d\), then \eqref{eq:relative-kameko-square} gives
\[
 \bar\iota_D\bigl(\overline\upsilon_n^B([b])\bigr)
 =\overline\upsilon_n^P\bigl(\bar\iota_d([b])\bigr)
 =0.
\]
Hence \(\overline\upsilon_n^B\) restricts to a map
\[
 \upsilon_n^{\mathrm{rel}}:\Crel_d(n)\longrightarrow\Crel_D(n).
\]
This restriction is injective because \(\overline\upsilon_n^B\) is injective.

To prove surjectivity, let \(y\in\Crel_D(n)\). Since \(\overline\upsilon_n^B\) is surjective, there exists \(x\in Q_d(B(n))\) such that
\[
 y=\overline\upsilon_n^B(x).
\]
Because \(y\in\ker\bar\iota_D\), equation~\eqref{eq:relative-kameko-square} yields
\[
 0=\bar\iota_D(y)
 =\overline\upsilon_n^P\bigl(\bar\iota_d(x)\bigr).
\]
The map \(\overline\upsilon_n^P\) is injective, so \(\bar\iota_d(x)=0\). Therefore \(x\in\Crel_d(n)\), and \(y=\upsilon_n^{\mathrm{rel}}(x)\). This proves surjectivity and hence the asserted isomorphism.
\end{proof}

For the remainder of the section, write \(\upsilon=\upsilon_4\).

\begin{lemma}[The degree-\(22\) ordinary base]\label{lem:tower-ordinary-base}
The ordered monomial
\[
 x_1^9x_2^9x_3^3x_4
\]
is hit in \(P(4)\).
\end{lemma}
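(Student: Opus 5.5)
The plan is to exhibit an explicit hit equation
\[
 x_1^9x_2^9x_3^3x_4=\sum_{r\in\{1,2,4,8\}}\Sq^r(q_r),
 \qquad q_r\in P^{22-r}(4),
\]
with the $\Sq^8$ term carrying the target. This follows the symmetric base identity $m_{9,9,3,1}=\Sq^8(m_{5,5,3,1})+\Sq^1(m_{10,6,4,1})$ announced in the introduction. I do not expect the Kameko isomorphism of Proposition~\ref{prop:kameko-isomorphisms} to help directly: $x^{(9,9,3,1)}=\upsilon(x_1^4x_2^4x_3)$, but $\alpha(22+3)=3$ gives $\mu(22)\leq3<4$, so the hypothesis $\mu(2d+n)=n$ fails. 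Likewise the lower-spike criterion is unavailable, since $\omega(9,9,3,1)=(4,1,0,2)>_l\omega_{\min}(22)=(2,2,2,1)$. The argument will therefore be a direct Lucas--Cartan calculation, exactly as in Lemma~\ref{lem:single-square-exact}.

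\textbf{Step 1.} Expand $\Sq^8(x_1^5x_2^5x_3^3x_4)$ using \eqref{eq:cartan-monomial} and Theorem~\ref{thm:lucas}. The admissible increments are
\[
 r_1,r_2\in\{0,1,4,5\},\qquad r_3\in\{0,1,2,3\},\qquad r_4\in\{0,1\},\qquad r_1+r_2+r_3+r_4=8 .
\]
The tuple $(4,4,0,0)$ produces the target $x^{(9,9,3,1)}$. I will list every other tuple, for example $(4,1,3,0)\mapsto(9,6,6,1)$, $(5,0,3,0)\mapsto(10,5,6,1)$ and $(5,1,1,1)\mapsto(10,6,4,2)$, together with their symmetric partners under $(12)$.

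\textbf{Step 2.} Examine the residual terms. An odd increment applied to an odd exponent makes it even, and the target is the only residual term in which all four exponents stay odd with $r_3$ and $r_4$ both zero. Consequently every residual monomial has $\omega_1\leq2$: in degree $22$ it has exactly two or zero odd exponents. Zero odd exponents means a square. Such a term is of the form $g^2$ with $g$ of degree $11$, and I will rewrite it via $g^2=\Sq^{11}(g)$, or reduce $g$ modulo hits in degree $11$ and then square using $\Sq^{2k}(h^2)=(\Sq^kh)^2$.

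For a term with exactly two odd exponents, say $x_i^{2s+1}x_j^{2t+1}y$ with $y$ even in $x_i$ and $x_j$, I will use the $\Sq^1$-move. It has the shape
\[
 \Sq^1\bigl(x_i^{2s}x_j^{2t+1}y\bigr)=x_i^{2s}x_j^{2t+2}y
\]
when the remaining exponents are even, and its variants pair up neighbouring residuals so that they cancel. Guided by the symmetric identity, I expect $q_1$ to be a short sum of the ordered monomials in the orbit of $(10,6,4,1)$, namely those which the $\Sq^8$ residuals actually call for. If necessary I will add $\Sq^2$ and $\Sq^4$ images of monomials of weight type $(\ast,\ast,\ast,1)$ to absorb what is left.

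\textbf{Step 3.} Set up the final check as a finite linear system over $\F_2$. Take as unknowns the candidate preimage monomials that appear in Steps~1--2, restricted to the span of degree-$22$ monomials reached. Then solve for $q_1,q_2,q_4$ so that the sum equals $x^{(9,9,3,1)}+\Sq^8(x^{(5,5,3,1)})$ with the target removed.

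\textbf{Main obstacle.} The hard part is the cleanup in Step~2. The $\Sq^8$ expansion produces a few dozen residual monomials, and the $\Sq^1$ corrections themselves create new Cartan terms through the other odd exponents. What has to be verified is that this bookkeeping closes. I would first try the ansatz $q_8=x^{(5,5,3,1)}$ and $q_1\subseteq\operatorname{span}\{\text{ordered monomials of }(10,6,4,1)\}$ with $q_2=q_4=0$, mirroring the symmetric identity. Only if that fails would I enlarge the ansatz.
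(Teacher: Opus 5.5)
Your proposal does not yet prove the lemma. It describes a computation whose decisive step is never carried out, and the ansatz you plan to try first cannot succeed.

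\textbf{The ansatz fails.} For a single ordered source nothing in $\Sq^8(x_1^5x_2^5x_3^3x_4)$ cancels: its fifteen Cartan summands are distinct. Besides the target and the two squares $x^{(10,6,4,2)},x^{(6,10,4,2)}$, there remain twelve ordered monomials of types $(10,6,5,1)$, $(10,5,5,2)$, $(9,6,6,1)$ and $(9,6,5,2)$. All twelve have weight $(2,2,2,1)=\omega_{\min}(22)$.
\begin{itemize}
\item In the symmetric identity these orbits vanish only because each ordered target is produced by two of the twelve sources (the multiplicity-$2$ rows of \eqref{eq:tower-multiplicity-table}). That mechanism has no ordered analogue.
\item $\Sq^1$ of a monomial in the orbit of $(10,6,4,1)$ can only raise the exponent $1$. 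So your restricted $q_1$ removes at most the two squares, which are hit anyway.
\item In fact no $q_1$ whatsoever works with $q_8=x^{(5,5,3,1)}$ and $q_2=q_4=0$. Since $\Sq^1\Sq^1=0$ and $\Sq^1\Sq^8=\Sq^9$, you would need $\Sq^9(x^{(5,5,3,1)})=\Sq^1(x^{(9,9,3,1)})$. The left side contains $x_1^{10}x_2^6x_3^6x_4$, from the increment $(5,1,3,0)$. The right side consists only of $x^{(10,9,3,1)},x^{(9,10,3,1)},x^{(9,9,4,1)},x^{(9,9,3,2)}$.
\end{itemize}

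\textbf{The $\Sq^1$-move is misdirected.} Under your hypothesis, its output $x_i^{2s}x_j^{2t+2}y$ has all exponents even. It therefore never touches a residual with two odd exponents. Moving those requires degree-$21$ sources with three odd exponents together with $\Sq^2$, $\Sq^4$ or $\Sq^8$ images. That is a genuine minimal-weight hit problem, and you do not show that the restricted linear system of Step~3 is solvable.

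\textbf{The missing idea.} After correctly ruling out the Kameko isomorphism and the lower-spike theorem, you overlooked the Silverman--Singer criterion. It says that $gh^{2^k}$ is hit whenever $\deg g<(2^k-1)\mu(\deg h)$, and it rests on the conjugation ($\chi$-trick) identities. Write
\[
x_1^9x_2^9x_3^3x_4=(x_1x_2x_3^3x_4)(x_1x_2)^8,
\]
with $g=x_1x_2x_3^3x_4$, $h=x_1x_2$ and $k=3$. Then $\deg g=6<(2^3-1)\mu(2)=14$, and the lemma follows in one line; this is exactly the paper's proof.

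A brute-force route could still be completed, but only by solving the full degree-$22$ hit system, as the paper's row reduction of $O_{22}$ does as a check. As written, the proposal stops short of a proof.
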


\begin{proof}
Factor the monomial as
\[
 x_1^9x_2^9x_3^3x_4
 =(x_1x_2x_3^3x_4)(x_1x_2)^8.
\]
Set
\[
 g=x_1x_2x_3^3x_4,
 \qquad
 h=x_1x_2,
 \qquad
 k=3.
\]
Then \(gh^{2^k}=x_1^9x_2^9x_3^3x_4\), \(\deg g=6\), \(\deg h=2\), and \(\mu(2)=2\). The Silverman--Singer criterion states that \(gh^{2^k}\) is hit whenever
\[
 \deg g<(2^k-1)\mu(\deg h)
\]
\cite[Theorem~14.1.2]{WalkerWoodI}. Here
\[
 6<(2^3-1)\mu(2)=7\cdot2=14.
\]
Therefore the ordered monomial is hit.
\end{proof}

\begin{lemma}[The degree-\(22\) symmetric base]\label{lem:tower-symmetric-base}
In \(B^{22}(4)\), one has the exact identity
\[
 m_{9,9,3,1}
 =\Sq^8(m_{5,5,3,1})+\Sq^1(m_{10,6,4,1}).
 \tag{7.2}\label{eq:tower-base}
\]
In particular, \(m_{9,9,3,1}\) is hit in \(B(4)\).
\end{lemma}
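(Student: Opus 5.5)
The plan is to verify \eqref{eq:tower-base} by computing both Steenrod images directly in the monomial-symmetric basis of \(B^{22}(4)\). The key device is a counting rule for the coefficient of \(m_\lambda\) in \(\Sq^r(m_\mu)\). Fix an ordered representative \(x^{c_0}\) of the orbit of \(\mu\). Let \(N_\lambda\) be the number of admissible increment tuples \((r_1,\dots,r_4)\) with \(\sum r_i=r\) and \(r_i\subseteq_2 c_{0,i}\) (Theorem~\ref{thm:lucas}) whose target \(c_0+r\) lies in the orbit of \(\lambda\).

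Count the pairs (source monomial in \(\sigma(x^{c_0})\), Cartan summand landing in the orbit of \(\lambda\)). By \(\Sigma_4\)-equivariance of \eqref{eq:cartan-monomial}, this count equals \(N_\lambda\cdot|\mathrm{orb}\,\mu|\). Each ordered monomial of the orbit of \(\lambda\) receives the same number of such pairs. Hence the coefficient of \(m_\lambda\) is
\[
 N_\lambda\,\frac{|\Stab(\lambda)|}{|\Stab(\mu)|}\pmod 2 .
\]

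First, \(\Sq^1(m_{10,6,4,1})\) is immediate. Only the odd exponent \(1\) admits the increment, so the only target is \((10,6,4,2)\). Its stabilizer is trivial, as is that of \((10,6,4,1)\), so
\[
 \Sq^1(m_{10,6,4,1})=m_{10,6,4,2}.
\]
It therefore remains to show
\[
 \Sq^8(m_{5,5,3,1})=m_{9,9,3,1}+m_{10,6,4,2}.
\]

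For the source \(c_0=(5,5,3,1)\), the admissible increments are:
\begin{itemize}
 \item \(r_1,r_2\in\{0,1,4,5\}\), since \(5=101_2\);
 \item \(r_3\in\{0,1,2,3\}\);
 \item \(r_4\in\{0,1\}\).
\end{itemize}
I will enumerate all tuples with sum \(8\), organised by the pair \((r_1,r_2)\), since \(r_3+r_4\le4\) forces \(r_1+r_2\ge4\).

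The list should be:
\[
 (4,4,0,0),\quad (5,1,2,0),\quad (5,1,1,1),\quad (5,0,3,0),\quad (5,0,2,1),\quad (4,1,3,0),\quad (4,1,2,1),\quad (4,0,3,1),
\]
together with the images of the last seven under swapping the first two coordinates.

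Grouping the targets by partition gives the following table. Here \(|\Stab(\mu)|=2\) throughout, and the coefficient is \(N_\lambda|\Stab(\lambda)|/2 \pmod 2\).
\[
\begin{array}{c|c|c|c|c}
 \lambda & \text{tuples} & N_\lambda & |\Stab(\lambda)| & \text{coefficient}\\
\hline
 (9,9,3,1) & (4,4,0,0) & 1 & 2 & 1\\
 (10,6,4,2) & (5,1,1,1)\text{ and swap} & 2 & 1 & 1\\
 (10,6,5,1) & (5,1,2,0),\ (5,0,3,0)\text{ and swaps} & 4 & 1 & 0\\
 (9,6,6,1) & (4,1,3,0)\text{ and swap} & 2 & 2 & 0\\
 (10,5,5,2) & (5,0,2,1)\text{ and swap} & 2 & 2 & 0\\
 (9,6,5,2) & (4,1,2,1),\ (4,0,3,1)\text{ and swaps} & 4 & 1 & 0
\end{array}
\]
Adding the two computations, \(m_{10,6,4,2}\) cancels and \(m_{9,9,3,1}\) remains. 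This is \eqref{eq:tower-base}, and the right-hand side is visibly a sum of positive squares of symmetric polynomials, so \(m_{9,9,3,1}\in\Hit^{22}(B(4))\).

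The main obstacle is completeness of the enumeration of the degree-\(8\) increments together with correct stabilizer bookkeeping. Two features make this delicate. First, the target \((10,6,5,1)\) arises from two different tuple families whose contributions cancel only jointly. Second, \((9,6,5,2)\) likewise arises from two families. Missing either tuple would falsely leave a surviving orbit. I would therefore double-check the count by a second route: expand the six-term orbit of \(x_1^5x_2^5x_3^3x_4\) fully and compare coefficients on the single ordered monomials \(x_1^{10}x_2^6x_3^5x_4\) and \(x_1^9x_2^6x_3^5x_4^2\).
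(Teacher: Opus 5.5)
Your proposal is correct and follows the paper's route. Like the paper, you use a direct Lucas--Cartan enumeration to show $\Sq^8(m_{5,5,3,1})=m_{9,9,3,1}+m_{10,6,4,2}$ and $\Sq^1(m_{10,6,4,1})=m_{10,6,4,2}$; your fifteen increment tuples with the conversion $N_\lambda|\Stab(\lambda)|/|\Stab(\mu)|$ reproduce exactly the paper's per-target multiplicities ($1$ for $(9,9,3,1)$ and $(10,6,4,2)$, and $2$ for the other four orbits) and its Cartan-summand totals $12N_\lambda$, the only difference being that you enumerate from one ordered representative rather than over all twelve source monomials.
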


\begin{proof}
We first compute \(\Sq^8(m_{5,5,3,1})\). For an ordered source monomial with exponent multiset \(\{5,5,3,1\}\), Lucas' theorem restricts the possible increments as follows:
\[
\begin{array}{c|c|c}
\text{source exponent}&\text{binary expansion}&\text{admissible increments}\\
\hline
5&(101)_2&0,1,4,5\\
3&(011)_2&0,1,2,3\\
1&(001)_2&0,1.
\end{array}
\tag{7.3}\label{eq:tower-increments}
\]
For each of the twelve distinct ordered monomials in \(m_{5,5,3,1}\), we choose one admissible increment for each coordinate, retain only the choices whose sum is \(8\), and sort the target exponent tuple into a partition. Because the source polynomial is symmetric, every ordered monomial in a fixed target orbit occurs with the same integer multiplicity before reduction modulo \(2\). The complete enumeration is
\[
\begin{array}{c|c|c}
\text{target partition}&
\text{multiplicity of each ordered target}&
\text{parity}\\
\hline
(10,6,5,1)&2&0\\
(10,6,4,2)&1&1\\
(10,5,5,2)&2&0\\
(9,9,3,1)&1&1\\
(9,6,6,1)&2&0\\
(9,6,5,2)&2&0.
\end{array}
\tag{7.4}\label{eq:tower-multiplicity-table}
\]
For completeness, the total numbers of Cartan summands in the six rows are, respectively,
\[
 48,\quad24,\quad24,\quad12,\quad24,\quad48.
\]
These totals equal the multiplicity in the middle column of \eqref{eq:tower-multiplicity-table} times the orbit sizes
\[
 24,\quad24,\quad12,\quad12,\quad12,\quad24,
\]
which independently verifies that the count is constant along every target orbit. Reducing the middle column modulo \(2\), only two target orbits survive. Hence
\[
 \Sq^8(m_{5,5,3,1})
 =m_{9,9,3,1}+m_{10,6,4,2}.
 \tag{7.5}\label{eq:tower-first-expansion}
\]

We next compute \(\Sq^1(m_{10,6,4,1})\). In every ordered monomial in this orbit, the exponents \(10\), \(6\), and \(4\) are even, while the exponent \(1\) is odd. Formula~\eqref{eq:cartan-monomial} therefore has exactly one nonzero summand: the coordinate carrying exponent \(1\) receives the increment \(1\). Consequently
\[
 \Sq^1(m_{10,6,4,1})=m_{10,6,4,2}.
 \tag{7.6}\label{eq:tower-second-expansion}
\]
Adding \eqref{eq:tower-first-expansion} and \eqref{eq:tower-second-expansion} over \(\F_2\) cancels the orbit \(m_{10,6,4,2}\) and yields \eqref{eq:tower-base}.
\end{proof}

\begin{lemma}[The numerical and minimal-spike data]\label{lem:tower-numerics}
Let
\[
 d_t=26\cdot2^t-4.
\]
Then \(\mu(d_0)=2\), and \(\mu(d_t)=4\) for every \(t\geq1\). A minimal spike partition is
\[
 M_0=(15,7,0,0)
\]
when \(t=0\), and
\[
 M_t=(2^{t+4}-1,2^{t+3}-1,2^t-1,2^t-1)
 \tag{7.7}\label{eq:tower-minimal-spike}
\]
when \(t\geq1\).
\end{lemma}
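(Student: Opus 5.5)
The proof splits into two parts: compute \(\mu(d_t)\) with the numerical criterion \eqref{eq:mu-alpha}, then check that the displayed spikes have the right degree and are minimal.

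\textbf{Step 1: computing \(\mu(d_t)\).} By \eqref{eq:mu-alpha}, \(\mu(d)\leq k\) if and only if \(\alpha(d+k)\leq k\). So \(\mu(d)=k\) follows once \(\alpha(d+k)\leq k\) and \(\alpha(d+j)>j\) for every \(j<k\).

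For \(t=0\) we have \(d_0=22\).
\begin{itemize}
\item \(\alpha(23)=4>1\), so \(\mu(22)>1\).
\item \(\alpha(24)=\alpha(11000_2)=2\leq2\), so \(\mu(22)\leq2\).
\end{itemize}
Hence \(\mu(d_0)=2\).

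For \(t\geq1\) we use that \(26\cdot2^t\) has binary expansion \(11010\) followed by \(t\) zeros.
\begin{itemize}
\item \(d_t+4=26\cdot2^t\), so \(\alpha(d_t+4)=3\leq4\) and \(\mu(d_t)\leq4\).
\item \(d_t+3=25\cdot2^t+(2^t-1)\). The two summands occupy disjoint binary digits, so \(\alpha(d_t+3)=3+t>3\).
\item \(d_t+2=2\bigl(12\cdot2^t+(2^t-1)\bigr)\), so \(\alpha(d_t+2)=2+t>2\).
\item \(d_t+1\) is odd and greater than \(1\). It is not a power of \(2\), so \(\alpha(d_t+1)\geq2>1\).
\end{itemize}
Hence \(\mu(d_t)=4\) for all \(t\geq1\).

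\textbf{Step 2: the spikes have degree \(d_t\).} For \(t=0\), \(15+7=22\). For \(t\geq1\),
\[
(2^{t+4}-1)+(2^{t+3}-1)+2(2^t-1)=(16+8+2)2^t-4=d_t .
\]
Each partition also has exactly \(\mu(d_t)\) nonzero parts, namely \(2\) for \(t=0\) and \(4\) for \(t\geq1\).

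\textbf{Step 3: minimality.} This is the main point. I would invoke Walker and Wood's characterization of minimal spikes \cite[Section~2.4]{WalkerWoodI}. It says that a spike \((2^{r_1}-1,\ldots,2^{r_s}-1)\) with \(s=\mu(d)\) nonzero parts is minimal exactly when
\[
r_1>r_2>\cdots>r_{s-1}\geq r_s,
\]
that is, only the two smallest parts may coincide. Our exponent data satisfy this:
\begin{itemize}
\item for \(t=0\), \((r_1,r_2)=(4,3)\);
\item for \(t\geq1\), \((r_1,r_2,r_3,r_4)=(t+4,t+3,t,t)\).
\end{itemize}
The resulting weight sequence, in the form of Definition~\ref{def:four-splitting}, is
\[
\omega_{\min}(d_t)=(\underbrace{4,\ldots,4}_{t},\underbrace{2,2,2}_{\text{positions }t+1,t+2,t+3},1)\qquad(t\geq1),
\]
and \((2,2,2,1)\) for \(t=0\).

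If a self-contained argument is preferred, I would argue directly in the left order.
\begin{itemize}
\item Any spike has \(\omega_1\) equal to its number of nonzero parts, and a degree-\(d\) spike has at least \(\mu(d)\) of them. So a minimal spike has exactly \(\mu(d)\) nonzero parts.
\item Among such spikes, \(\omega_j\) counts the parts with \(r_i\geq j\). Minimizing lexicographically forces the \(r_i\) to drop off as early as possible, subject to the fixed total \(\sum(2^{r_i}-1)=d\).
\item A repeated exponent \(r_i=r_{i'}\) with \(i,i'<s\) could be replaced using \(2(2^r-1)=(2^{r+1}-1)+(2^0-1)\). This reduces the number of parts, contradicting the minimality of \(\mu(d)\) parts.
\end{itemize}
I expect this last uniqueness step, that the distinct-exponent shape determines \(\omega_{\min}\), to be the only step needing care. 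Step 1 is a routine binary count.
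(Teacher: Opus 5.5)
Your main line is correct and is essentially the paper's proof: both compute \(\mu(d_t)\) through \(\mu(d)\leq k\iff\alpha(d+k)\leq k\) and then cite Walker and Wood's criterion \(r_1>\cdots>r_{s-1}\geq r_s\) for minimal spikes. The one difference is that you check \(\alpha(d_t+j)>j\) for \(j=1,2,3\), whereas the paper uses that \(d_t\) is even to reduce to \(j=2\); this is harmless.

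Drop your optional self-contained sketch, though. The identity \(2(2^r-1)=(2^{r+1}-1)+(2^0-1)\) is false, since the left side is \(2^{r+1}-2\). The correct exchange is \(2(2^r-1)+(2^q-1)=(2^{r+1}-1)+2(2^{q-1}-1)\) for a third part \(q\leq r\). It does not reduce the number of parts; it lowers \(\omega_q\) by \(2\) and leaves the earlier entries unchanged. Even with that fix, uniqueness of the resulting shape would still need its own argument.
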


\begin{proof}
For \(t=0\), one has
\[
 d_0=22=15+7,
\]
so \(\mu(22)\leq2\). Since \(22\) is not a single spike exponent, \(\mu(22)=2\). The standard minimal-spike construction gives \(M_0=(15,7,0,0)\).

Let \(t\geq1\). The decomposition
\[
\begin{aligned}
 d_t
 &=(2^{t+4}-1)+(2^{t+3}-1)
   +(2^t-1)+(2^t-1)
\end{aligned}
\]
shows that \(\mu(d_t)\leq4\). Every positive spike exponent \(2^r-1\) is odd, so the parity of any sum of \(s\) positive spike exponents is congruent to \(s\) modulo \(2\). Since \(d_t\) is even, \(\mu(d_t)\) is even. It remains to exclude \(\mu(d_t)=2\).

Using \eqref{eq:mu-alpha}, the inequality \(\mu(d_t)\leq2\) would imply
\[
 \alpha(d_t+2)\leq2.
\]
However,
\[
 d_t+2
 =2^{t+4}+2^{t+3}+\sum_{j=1}^{t}2^j,
\]
whose binary expansion has \(t+2\geq3\) nonzero digits. Hence \(\alpha(d_t+2)>2\), and \(\mu(d_t)>2\). The only remaining even possibility below or equal to \(4\) is \(\mu(d_t)=4\).

The exponents in \eqref{eq:tower-minimal-spike} have associated lengths
\[
 t+4>t+3>t=t>0.
\]
They sum to \(d_t\) and satisfy the standard ordering criterion for the minimal spike \cite[Section~5.4]{WalkerWoodI}. Thus \eqref{eq:tower-minimal-spike} is a minimal spike partition.
\end{proof}

\begin{lemma}[The tower lies above the minimal spike]\label{lem:tower-above-spike}
For
\[
 \lambda_t=(10\cdot2^t-1,10\cdot2^t-1,2^{t+2}-1,2^{t+1}-1),
\]
one has
\[
 \omega(\lambda_t)>_l\omega_{\min}(d_t)
\]
for every \(t\geq0\).
\end{lemma}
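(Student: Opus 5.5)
Both weight sequences can be computed explicitly from block column sums, and then compared in the left order. The comparison is decided at the first column, except for \(t=0\).

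\textbf{The sequence \(\omega(\lambda_t)\).} Since
\[
 10\cdot2^t-1=(2^{t+3}+2^{t+1})-1 = 2^{t+3}+2^{t+2}+\cdots ,
\]
it is cleaner to write \(10\cdot2^t-1=9\cdot2^t+(2^t-1)\). In binary this is \(2^t-1\) (ones in positions \(0,\dots,t-1\)) followed by \(9=(1001)_2\) shifted by \(t\). So the reversed row of \(10\cdot2^t-1\) has ones in columns \(1,\dots,t\), then \(1,0,0,1\) in columns \(t+1,\dots,t+4\). The rows \(2^{t+2}-1\) and \(2^{t+1}-1\) have ones in columns \(1,\dots,t+2\) and \(1,\dots,t+1\) respectively. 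Summing the four rows column by column gives
\[
 \omega(\lambda_t)=(\underbrace{4,\dots,4}_{t},\,4,\,1,\,0,\,2).
\]
The entry in column \(t+1\) is \(2+1+1=4\). Column \(t+2\) contains only the \(2^{t+2}-1\) row. Column \(t+3\) is empty. Column \(t+4\) contains the two repeated rows. As a check, the 2-degree is
\[
 4(2^{t+1}-1)+2^{t+1}+2\cdot2^{t+3}=26\cdot2^t-4=d_t .
\]

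\textbf{The sequence \(\omega_{\min}(d_t)\).} I would read it off the minimal spike partitions of Lemma~\ref{lem:tower-numerics}, using Definition~\ref{def:four-splitting}.

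For \(t\geq1\), the partition \(M_t\) has \((r_1,r_2,r_3,r_4)=(t+4,t+3,t,t)\). Hence
\[
 \omega_{\min}(d_t)=(\underbrace{4,\dots,4}_{t},2,2,2,1).
\]
The sequences agree through the first \(t\) entries. In position \(t+1\), \(\omega(\lambda_t)\) has \(4\) while \(\omega_{\min}(d_t)\) has \(2\), so \(\omega(\lambda_t)>_l\omega_{\min}(d_t)\).

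For \(t=0\), the partition \(M_0=(15,7,0,0)\) gives \(\omega_{\min}(22)=(2,2,2,1)\), while \(\omega(\lambda_0)=\omega(9,9,3,1)=(4,1,0,2)\). The first entries already satisfy \(4>2\), so the inequality holds here as well.

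\textbf{Main obstacle.} The only delicate point is justifying that \(\omega_{\min}(d_t)\) really is the sequence written above. This relies on Lemma~\ref{lem:tower-numerics} together with the standard fact that a minimal spike's weight sequence is given by the four-row splitting formula. Even that could be avoided, because only the first differing entry matters. Any spike in degree \(d_t\) with \(\mu(d_t)\) parts has its first column equal to the number of positive parts. That number is at most \(4\), and in the \(t=0\) case it equals \(2<4\). For \(t\geq1\) one has equality in the first \(t\) columns, and the minimality of \(M_t\) handles column \(t+1\). Everything else is direct binary bookkeeping.
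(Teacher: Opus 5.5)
Your proof is correct and is essentially the paper's own argument. You compute $\omega(\lambda_t)=(4,\ldots,4,1,0,2)$, with $t+1$ leading fours, from the binary expansions. You read $\omega_{\min}(d_t)=(4,\ldots,4,2,2,2,1)$, with $t$ leading fours, off the spike $M_t$ of Lemma~\ref{lem:tower-numerics}. You then compare at entry $t+1$, or at entry $1$ when $t=0$.

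Two small remarks. First, the discarded aside $2^{t+3}+2^{t+2}+\cdots$ is a slip; it should read $2^{t+3}+2^{t}+\cdots+1$. Second, your ``main obstacle'' disappears. By the definition of a minimal spike, $\omega_{\min}(d_t)\leq_l\omega(M_t)$ for any degree-$d_t$ spike $M_t$. So you only need $M_t$ to be some degree-$d_t$ spike, and its minimality is never used.
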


\begin{proof}
For \(t=0\),
\[
 \lambda_0=(9,9,3,1),
 \qquad
 \omega(\lambda_0)=(4,1,0,2),
\]
whereas Lemma~\ref{lem:tower-numerics} gives
\[
 \omega_{\min}(22)=\omega(15,7,0,0)=(2,2,2,1).
\]
The first entries satisfy \(4>2\), so the desired strict inequality holds.

Assume \(t\geq1\). The binary identities
\[
 10\cdot2^t-1=2^{t+3}+(2^{t+1}-1),
\]
\[
 2^{t+2}-1=\sum_{j=0}^{t+1}2^j,
 \qquad
 2^{t+1}-1=\sum_{j=0}^{t}2^j
\]
show that
\[
 \omega(\lambda_t)
 =(\underbrace{4,\ldots,4}_{t+1},1,0,2).
 \tag{7.8}\label{eq:tower-weight}
\]
From the minimal spike \eqref{eq:tower-minimal-spike}, one obtains
\[
 \omega_{\min}(d_t)
 =(\underbrace{4,\ldots,4}_{t},2,2,2,1).
 \tag{7.9}\label{eq:tower-min-weight}
\]
The first \(t\) entries of \eqref{eq:tower-weight} and \eqref{eq:tower-min-weight} agree. At the next entry, \(\omega(\lambda_t)\) has value \(4\), while the minimal spike has value \(2\). Hence \(\omega(\lambda_t)>_l\omega_{\min}(d_t)\).
\end{proof}

\begin{proof}[Proof of Theorem~\ref{thm:intro-kameko-tower}]
The base \(t=0\) is established by Lemmas~\ref{lem:tower-ordinary-base} and~\ref{lem:tower-symmetric-base}. We now propagate the two zero cohit classes.

For every \(t\geq0\),
\[
 2d_t+4
 =2(26\cdot2^t-4)+4
 =26\cdot2^{t+1}-4
 =d_{t+1}.
\]
Since \(t+1\geq1\), Lemma~\ref{lem:tower-numerics} gives
\[
 \mu(2d_t+4)=\mu(d_{t+1})=4.
\]
Proposition~\ref{prop:kameko-isomorphisms} therefore applies at every step, both to \(P(4)\) and to \(B(4)\).

For the ordered monomial,
\[
\begin{aligned}
 \upsilon(x^{\lambda_t})
 &=x_1x_2x_3x_4(x^{\lambda_t})^2\\
 &=x_1^{2(10\cdot2^t-1)+1}
   x_2^{2(10\cdot2^t-1)+1}
   x_3^{2(2^{t+2}-1)+1}
   x_4^{2(2^{t+1}-1)+1}\\
 &=x^{\lambda_{t+1}}.
\end{aligned}
\]
Because \(\upsilon\) commutes with permutations and sends distinct orbit terms to distinct orbit terms, the same calculation gives
\[
 \upsilon(m_{\lambda_t})=m_{\lambda_{t+1}}.
\]
The zero classes of \(x^{\lambda_0}\) and \(m_{\lambda_0}\) in the ordinary and symmetric cohit spaces therefore map successively to the zero classes of \(x^{\lambda_t}\) and \(m_{\lambda_t}\). Induction proves both hit assertions for every \(t\).

The first two exponents of \(\lambda_t\) are equal, while
\[
 10\cdot2^t-1>2^{t+2}-1>2^{t+1}-1.
\]
Thus the stabilizer is exactly \(\Sigma_2\). Finally, Lemma~\ref{lem:tower-above-spike} proves that every member lies strictly above the minimal-spike weight sequence. This completes the proof.
\end{proof}

\begin{remark}\label{rem:old-lower-tower}
The degree-\(12\) identity for \(m_{5,5,1,1}\), discussed in Section~\ref{sec:computational-evidence}, also propagates under Kameko maps. That family lies below the corresponding minimal-spike weight sequence and is therefore covered by Theorem~\ref{thm:peterson-lower-spike}. In contrast, the first weight coordinate at which the family of Theorem~\ref{thm:intro-kameko-tower} differs from the minimal spike is strictly larger. Its proof therefore gives unconditional information outside the lower-spike range.
\end{remark}

\section{Computational certification and exact low-degree diagnostics}\label{sec:computational-evidence}

The preceding section gives algebraic identities and Kameko propagation. This section supplies independent exact linear-algebra calculations for the symmetric hit space, the ordinary hit space, and the inclusion between their cohit quotients. The calculations prove Theorem~\ref{thm:intro-low-degree} in degrees \(8\), \(12\), \(14\), and \(22\), verify the displayed low-degree equations, and establish the base needed for relative Kameko propagation.

\subsection{The exact matrix model}

For \(d\geq0\), define
\[
 \Part_4(d)=
 \{\lambda\in\mathbf N^4:
   \lambda_1\geq\lambda_2\geq\lambda_3\geq\lambda_4,
   \ |\lambda|=d\}
\]
and
\[
 \Comp_4(d)=
 \{a\in\mathbf N^4:|a|=d\}.
\]
The set \(\{m_\lambda:\lambda\in\Part_4(d)\}\) is a basis of \(B^d(4)\), while \(\{x^a:a\in\Comp_4(d)\}\) is the ordered-monomial basis of \(P^d(4)\). Their cardinalities are
\[
 |\Part_4(d)|=[z^d]\frac{1}{(1-z)(1-z^2)(1-z^3)(1-z^4)}
\]
and
\[
 |\Comp_4(d)|=\binom{d+3}{3}.
\]
Here \([z^d]F(z)\) denotes the coefficient of \(z^d\) in the formal power series \(F(z)\). In particular,
\[
\begin{array}{c|c|c}
 d&|\Part_4(d)|&|\Comp_4(d)|\\
\hline
8&15&165\\
12&34&455\\
14&47&680\\
22&136&2300.
\end{array}
\tag{8.1}\label{eq:basis-cardinalities}
\]

Fix orderings of \(\Part_4(d)\) and \(\Comp_4(d)\). The symmetric hit matrix \(S_d\) has rows indexed by \(\Part_4(d)\) and columns indexed by pairs
\[
 (r,\lambda),
 \qquad
 r>0,
 \quad
 \lambda\in\Part_4(d-r),
\]
for which \(\Sq^r(m_\lambda)\neq0\). Its entries are defined by
\[
 (S_d)_{\mu,(r,\lambda)}
 =[m_\mu]\Sq^r(m_\lambda).
 \tag{8.2}\label{eq:symmetric-hit-matrix}
\]
Here \([m_\mu]f\) denotes the coefficient of \(m_\mu\) in a symmetric polynomial \(f\). By Definition~\ref{def:hit-cohit}, the column space of \(S_d\) is exactly \(\Hit^d(B(4))\). Hence
\[
 \dim Q_d(B(4))
 =|\Part_4(d)|-\operatorname{rank}S_d.
 \tag{8.3}\label{eq:symmetric-quotient-rank}
\]

The ordinary hit matrix \(O_d\) has rows indexed by \(\Comp_4(d)\) and columns indexed by pairs
\[
 (r,a),
 \qquad
 r>0,
 \quad
 a\in\Comp_4(d-r),
\]
for which \(\Sq^r(x^a)\neq0\). Its entries are
\[
 (O_d)_{b,(r,a)}
 =[x^b]\Sq^r(x^a).
 \tag{8.4}\label{eq:ordinary-hit-matrix}
\]
Here \([x^b]f\) denotes the coefficient of the ordered monomial \(x^b\) in \(f\). Formula~\eqref{eq:cartan-monomial} and Lucas' theorem compute each column without symbolic expansion of binomial coefficients. The column space of \(O_d\) is \(\Hit^d(P(4))\).

Finally, define the inclusion matrix \(I_d\) with rows indexed by \(\Comp_4(d)\) and columns indexed by \(\Part_4(d)\) by
\[
 (I_d)_{a,\lambda}=1
 \quad\Longleftrightarrow\quad
 a\text{ is a permutation of }\lambda.
 \tag{8.5}\label{eq:inclusion-matrix}
\]
Thus the \(\lambda\)-column of \(I_d\) is the ordered-monomial coordinate vector of \(m_\lambda\). The image of
\[
 \bar\iota_d:Q_d(B(4))\longrightarrow Q_d(P(4))
\]
is
\[
 \frac{\operatorname{im}I_d+\operatorname{im}O_d}{\operatorname{im}O_d}.
\]
Therefore
\[
 \operatorname{rank}\bar\iota_d
 =\operatorname{rank}[O_d\mid I_d]-\operatorname{rank}O_d,
 \tag{8.6}\label{eq:inclusion-rank}
\]
where \([O_d\mid I_d]\) denotes the matrix obtained by adjoining the columns of \(I_d\) to those of \(O_d\). Consequently,
\[
 \dim\Crel_d(4)
 =|\Part_4(d)|-\operatorname{rank}S_d
  -\operatorname{rank}[O_d\mid I_d]
  +\operatorname{rank}O_d.
 \tag{8.7}\label{eq:relative-rank-formula}
\]
This formula is the computational form of Theorem~\ref{thm:intro-relative-duality}: it computes the kernel of the inclusion on cohits, while the dual computation would compute the cokernel of restriction on Steenrod kernels.

Every column is stored as a bit vector with respect to the chosen row basis. Gaussian elimination is performed as follows. Given a nonzero column, choose its largest occupied row as pivot. If that pivot has already been assigned to an earlier column, add the earlier pivot column, which is the bitwise exclusive-or operation, and repeat. If no pivot column has yet been assigned, retain the reduced column as a new pivot. The process terminates because each reduction strictly decreases the largest occupied row. The retained pivot rows are distinct, and the usual echelon argument proves that their number is the rank over \(\F_2\). Thus every number in the rank table below is obtained by exact finite-field arithmetic, with no numerical approximation.

The exact ranks obtained from \eqref{eq:symmetric-hit-matrix}--\eqref{eq:inclusion-matrix} are
\[
\begin{array}{c|c|c|c|c|c|c|c}
 d&|\Part_4(d)|&\operatorname{rank}S_d&\dim Q_d(B(4))
 &|\Comp_4(d)|&\operatorname{rank}O_d
 &\operatorname{rank}[O_d\mid I_d]&\dim\Crel_d(4)\\
\hline
8&15&11&4&165&110&114&0\\
12&34&32&2&455&434&436&0\\
14&47&43&4&680&630&634&0\\
22&136&129&7&2300&2184&2191&0.
\end{array}
\tag{8.8}\label{eq:full-rank-table}
\]
The numbers of nonzero columns in \(S_d\) for \(d=8,12,14,22\) are, respectively,
\[
 24,\quad90,\quad148,\quad673,
\]
while the corresponding numbers of nonzero columns in \(O_d\) are
\[
 279,\quad1195,\quad2084,\quad11398.
\]
These column counts are recorded to make the finite search space completely explicit.

\begin{proof}[Proof of Theorem~\ref{thm:intro-low-degree}]
Substitute each of the four rows of \eqref{eq:full-rank-table} into \eqref{eq:relative-rank-formula}. For \(d\in\{8,12,14,22\}\), this gives
\[
 \dim\Crel_d(4)=0.
\]
The dimensions of \(B^d(4)\), \(\Hit^d(B(4))\), and \(Q_d(B(4))\) are read from the first three symmetric columns of the same table. Since each relative cohit space is finite-dimensional, dimension zero implies that it is the zero space. The final assertion follows from Corollary~\ref{cor:relative-detection}.
\end{proof}

With the partitions ordered decreasingly in lexicographic order, row reduction gives the following convenient complements to the symmetric hit spaces:
\[
\begin{aligned}
 Q_8(B(4)):
 &\quad[m_{7,1,0,0}],\ [m_{6,1,1,0}],\ [m_{5,1,1,1}],\ [m_{3,3,1,1}],\\
 Q_{12}(B(4)):
 &\quad[m_{7,3,1,1}],\ [m_{3,3,3,3}],\\
 Q_{14}(B(4)):
 &\quad[m_{7,7,0,0}],\ [m_{7,5,1,1}],\ [m_{7,3,3,1}],\ [m_{6,6,1,1}].
\end{aligned}
\tag{8.9}\label{eq:quotient-complements}
\]
These representatives form bases for the corresponding quotients. They are not claimed to be canonical; they are the nonpivot coordinate vectors produced by the stated ordering.

\subsection{Degree \texorpdfstring{\(8\)}{8}}

The degree-\(8\) symmetric basis has fifteen elements. Explicitly, the indexing partitions are
\[
\begin{gathered}
(8,0,0,0),(7,1,0,0),(6,2,0,0),(6,1,1,0),(5,3,0,0),\\
(5,2,1,0),(5,1,1,1),(4,4,0,0),(4,3,1,0),(4,2,2,0),\\
(4,2,1,1),(3,3,2,0),(3,3,1,1),(3,2,2,1),(2,2,2,2).
\end{gathered}
\tag{8.10}\label{eq:degree8-partitions}
\]
The matrix \(S_8\) has rank \(11\), leaving the four quotient classes displayed in \eqref{eq:quotient-complements}.

The partition \((3,3,1,1)\) is a spike partition because
\[
 3=2^2-1,
 \qquad
 1=2^1-1.
\]
A spike monomial cannot occur as a term of a positive Steenrod square \cite[Proposition~1.5.3]{WalkerWoodI}. Therefore neither the ordered monomial \(x_1^3x_2^3x_3x_4\) nor the orbit sum \(m_{3,3,1,1}\) is hit. Equivalently, the coefficient functional
\[
 \varphi_8(m_\lambda)=
 \begin{cases}
 1,&\lambda=(3,3,1,1),\\
 0,&\lambda\neq(3,3,1,1),
 \end{cases}
\]
annihilates every column of \(S_8\) and evaluates to \(1\) on \(m_{3,3,1,1}\).

The ordinary matrix has rank
\[
 \operatorname{rank}O_8=110,
 \qquad
 \operatorname{rank}[O_8\mid I_8]=114.
\]
It follows from \eqref{eq:inclusion-rank} that the image of \(Q_8(B(4))\) in \(Q_8(P(4))\) has dimension \(114-110=4\), equal to \(\dim Q_8(B(4))\). Formula~\eqref{eq:relative-rank-formula} therefore gives \(\dim\Crel_8(4)=4-4=0\). Hence every symmetric degree-\(8\) polynomial that is ordinary hit is symmetrically hit. The nonzero class \([m_{3,3,1,1}]\) does not contradict this statement because its ordered orbit terms are not ordinary hit. Degree \(8\) therefore tests the spike detector, not Janfada's implication.

\subsection{Degree \texorpdfstring{\(12\)}{12}}

The degree-\(12\) computation is the first test case in which the exponent tuple has nontrivial stabilizer. The ordered monomial is ordinary hit:
\[
 x_1^5x_2^5x_3x_4
 =(x_1x_2x_3x_4)(x_1x_2)^4.
\]
The Silverman--Singer criterion applies with
\[
 g=x_1x_2x_3x_4,
 \qquad
 h=x_1x_2,
 \qquad
 k=2,
\]
because
\[
 \deg g=4<(2^2-1)\mu(2)=6.
\]
Thus the premise of the symmetric hit conjecture is satisfied.

We now derive an exact symmetric hit equation. For \(\Sq^4(m_{3,3,1,1})\), the admissible increments for an exponent \(3=(11)_2\) are \(0,1,2,3\), while those for an exponent \(1\) are \(0,1\). Enumerating the increment quadruples of sum \(4\) gives the following complete orbitwise count:
\[
\begin{array}{c|c|c}
\text{target partition}&\text{multiplicity per ordered target}&\text{orbit size}\\
\hline
(6,4,1,1)&1&12\\
(6,3,2,1)&1&24\\
(5,5,1,1)&1&6\\
(5,4,2,1)&1&24\\
(5,3,2,2)&1&12\\
(4,4,2,2)&1&6
\end{array}
\]
Every multiplicity is odd, so the reduction modulo \(2\) is
\[
\begin{aligned}
 \Sq^4(m_{3,3,1,1})={}&
 m_{6,4,1,1}+m_{6,3,2,1}+m_{5,5,1,1}\\
 &+m_{5,4,2,1}+m_{5,3,2,2}+m_{4,4,2,2}.
\end{aligned}
\tag{8.11}\label{eq:d12-first}
\]
For \(\Sq^2(m_{5,3,1,1})\), the possible target partitions and the multiplicity of each ordered target before reduction modulo \(2\) are
\[
\begin{array}{c|ccccc}
\text{partition}&(6,4,1,1)&(6,3,2,1)&(5,5,1,1)&(5,4,2,1)&(5,3,2,2)\\
\hline
\text{multiplicity}&1&1&2&1&1.
\end{array}
\]
The orbit \((5,5,1,1)\) therefore cancels, and
\[
 \Sq^2(m_{5,3,1,1})
 =m_{6,4,1,1}+m_{6,3,2,1}
  +m_{5,4,2,1}+m_{5,3,2,2}.
 \tag{8.12}\label{eq:d12-second}
\]
Finally, the even exponents in \((4,4,1,1)\) cannot receive an increment \(1\), and the only nonzero sum-\(2\) Cartan contribution increments the two exponents \(1\). Hence
\[
 \Sq^2(m_{4,4,1,1})=m_{4,4,2,2}.
 \tag{8.13}\label{eq:d12-third}
\]
Adding \eqref{eq:d12-first}, \eqref{eq:d12-second}, and \eqref{eq:d12-third} over \(\F_2\) cancels every orbit except \(m_{5,5,1,1}\). Thus
\[
 m_{5,5,1,1}
 =\Sq^4(m_{3,3,1,1})
  +\Sq^2(m_{5,3,1,1})
  +\Sq^2(m_{4,4,1,1}) .
 \tag{8.14}\label{eq:d12-final}
\]
This is an equality in the symmetric polynomial algebra and hence gives an explicit symmetric hit equation.

The matrix calculation gives
\[
 \dim B^{12}(4)=34,
 \qquad
 \operatorname{rank}S_{12}=32,
 \qquad
 \dim Q_{12}(B(4))=2.
\]
The quotient complement in \eqref{eq:quotient-complements} is represented by the spike classes \([m_{7,3,1,1}]\) and \([m_{3,3,3,3}]\). The ordinary rank calculation gives
\[
 \operatorname{rank}O_{12}=434,
 \qquad
 \operatorname{rank}[O_{12}\mid I_{12}]=436,
\]
so the inclusion on cohits has rank \(2\), equal to the full dimension of \(Q_{12}(B(4))\). This is the matrix-theoretic reason that \(\Crel_{12}(4)=0\).

\subsection{Degree \texorpdfstring{\(14\)}{14}}

The degree-\(14\) example separates three notions that can otherwise be confused. The ordered monomial \(x_1^6x_2^6x_3x_4\) is not ordinary hit, the monomial symmetric function \(m_{6,6,1,1}\) is not symmetrically hit, and Walker and Wood's strongly spike-free class in the local ordinary quotient is an ordered two-term polynomial rather than the six-term symmetric orbit sum.

The exact symmetric expansions are
\[
 \Sq^2(m_{5,5,1,1})
 =m_{6,6,1,1}+m_{6,5,2,1}+m_{5,5,2,2},
 \tag{8.15}\label{eq:d14-first}
\]
\[
 \Sq^1(m_{6,5,1,1})=m_{6,5,2,1},
 \tag{8.16}\label{eq:d14-second}
\]
\[
 \Sq^6(m_{3,3,1,1})
 =m_{6,6,1,1}+m_{6,5,2,1}
  +m_{6,4,2,2}+m_{5,5,2,2},
 \tag{8.17}\label{eq:d14-third}
\]
and
\[
 \Sq^1(m_{6,3,2,2})=m_{6,4,2,2}.
 \tag{8.18}\label{eq:d14-fourth}
\]
We explain the two parity cancellations implicit in these formulas. In \(\Sq^1(m_{6,5,1,1})\), incrementing the coordinate with exponent \(5\) produces the partition \((6,6,1,1)\), but each ordered target occurs twice because the source orbit distinguishes which of the two resulting exponents \(6\) came from the original \(6\). This orbit therefore cancels. Incrementing either coordinate with exponent \(1\) produces the orbit \((6,5,2,1)\) with multiplicity one per ordered target, giving \eqref{eq:d14-second}. The other displayed expansions follow from the same Lucas enumeration; each listed target has odd multiplicity, and all omitted targets have even multiplicity.

Adding \eqref{eq:d14-first} and \eqref{eq:d14-second} gives
\[
 m_{6,6,1,1}+m_{5,5,2,2}
 \in\Hit^{14}(B(4)).
 \tag{8.19}\label{eq:d14-equivalence-one}
\]
Adding \eqref{eq:d14-third}, \eqref{eq:d14-second}, and \eqref{eq:d14-fourth} gives the same relation. Thus
\[
 [m_{6,6,1,1}]=[m_{5,5,2,2}]
 \quad\text{in }Q_{14}(B(4)).
 \tag{8.20}\label{eq:d14-same-class}
\]
The class is nonzero. Define \(\varphi_{14}\in\DS_{14}(4)\) by
\[
 \varphi_{14}(m_\lambda)=1
 \quad\Longleftrightarrow\quad
 \lambda\in
 \{(6,6,1,1),(5,5,2,2),(5,3,3,3),(4,4,3,3)\}.
 \tag{8.21}\label{eq:d14-functional}
\]
The matrix \(S_{14}\) has \(148\) nonzero columns. Direct pairing of the bit vector in \eqref{eq:d14-functional} with every one of these columns gives zero, so
\[
 \varphi_{14}\in\Ksym_{14}(4).
\]
Since \(\varphi_{14}(m_{6,6,1,1})=1\), the class in \eqref{eq:d14-same-class} is nonzero. The symmetric rank computation is
\[
 \operatorname{rank}S_{14}=43,
 \qquad
 \dim Q_{14}(B(4))=47-43=4.
\]
The ordinary and inclusion ranks are
\[
 \operatorname{rank}O_{14}=630,
 \qquad
 \operatorname{rank}[O_{14}\mid I_{14}]=634.
\]
Hence \(\operatorname{rank}\bar\iota_{14}=4\), so the inclusion is injective on the four-dimensional symmetric cohit space and \(\Crel_{14}(4)=0\).

The ordinary matrix calculation independently gives
\[
 x_1^6x_2^6x_3x_4\notin\Hit^{14}(P(4)).
\]
Thus the example does not satisfy the premise of Janfada's conjecture. Moreover, \(\Crel_{14}(4)=0\), so Theorem~\ref{thm:intro-relative-duality} implies that \(\varphi_{14}\) lies in the image of
\[
 \rho_{14}:K_{14}(4)\longrightarrow\Ksym_{14}(4).
\]
This extension is consistent because \(m_{6,6,1,1}\) is not ordinary hit. Indeed, if it were ordinary hit, its nonzero symmetric cohit class would define a nonzero element of \(\Crel_{14}(4)\), contradicting \(\Crel_{14}(4)=0\). The separate ordered-monomial calculation above is an additional check on the same degree.

Walker and Wood's degree-\(14\), weight-\((2,2,2)\) strongly spike-free class is represented in the ordinary local quotient by
\[
 [6,6,1,1]+[1,1,6,6]
\]
\cite[Section~30.3]{WalkerWoodII}. This notation denotes two ordered monomials. It is not the monomial symmetric function \(m_{6,6,1,1}\), which contains six ordered monomials. The former belongs to the ordinary local filtration of \(Q_{14}(P(4))\); the latter belongs to the symmetric algebra \(B^{14}(4)\). The relative duality theorem explains why their detector spaces cannot be identified merely from their shared exponent partition.

\subsection{Degree \texorpdfstring{\(22\)}{22} and independent certification of the tower base}

The exact identity \eqref{eq:tower-base} was proved in Section~\ref{sec:tower} by a complete multiplicity table. The matrix computation supplies an independent check. One finds
\[
 \dim B^{22}(4)=136,
 \qquad
 \operatorname{rank}S_{22}=129,
 \qquad
 \dim Q_{22}(B(4))=7.
\]
The ordinary matrix has
\[
 \operatorname{rank}O_{22}=2184,
 \qquad
 \operatorname{rank}[O_{22}\mid I_{22}]=2191.
\]
Thus the inclusion on cohits has rank \(7\), and \(\Crel_{22}(4)=0\). Row reduction of the ordinary hit matrix also confirms directly that
\[
 x_1^9x_2^9x_3^3x_4\in\Hit^{22}(P(4)).
\]
The exact symmetric expansion, the Silverman--Singer proof, and the two independent rank computations therefore agree.

\begin{proof}[Proof of Corollary~\ref{cor:intro-relative-kameko}]
Set
\[
 d_t=26\cdot2^t-4.
\]
The degree-\(22\) computation gives
\[
 \Crel_{d_0}(4)=\Crel_{22}(4)=0.
\]
For every \(t\geq0\),
\[
 d_{t+1}=2d_t+4.
\]
Lemma~\ref{lem:tower-numerics} gives \(\mu(d_{t+1})=4\). Proposition~\ref{prop:relative-kameko}, applied with \(n=4\), therefore gives an isomorphism
\[
 \Crel_{d_t}(4)\xrightarrow{\cong}\Crel_{d_{t+1}}(4).
\]
Induction on \(t\) yields
\[
 \Crel_{d_t}(4)=0
\]
for every \(t\geq0\). By Theorem~\ref{thm:intro-relative-duality},
\[
 \Crel_{d_t}(4)^*
 \cong
 \frac{\Ksym_{d_t}(4)}{\rho_{d_t}(K_{d_t}(4))}.
\]
The left-hand side is zero, so the quotient on the right-hand side is zero. Hence \(\rho_{d_t}\) is surjective for every \(t\geq0\).
\end{proof}

\section{Conclusion}\label{sec:conclusion}

The four-variable symmetric hit problem separates into three parts. The symmetric Peterson theorem removes the degrees with \(\mu(d)>4\) \cite{JanfadaWood2002,WalkerWoodII}; Walker and Wood's symmetric minimal-spike theorem handles orbit sums of weight strictly below \(\omega_{\min}(d)\) \cite[Proposition~25.1.5]{WalkerWoodII}; and Janfada's distinct-exponent theorem handles monomials with trivial stabilizer \cite{Janfada2011}. The remaining configurations are monomials with repeated exponents whose weight is at least the minimal-spike weight.

For this remaining part, the relevant obstruction is the relative cohit space
\[
 \Crel_d(4)=
 \frac{B^d(4)\cap\Hit^d(P(4))}{\Hit^d(B(4))}.
\]
Theorem~\ref{thm:intro-relative-duality} identifies its dual as
\[
 \Crel_d(4)^*
 \cong
 \frac{\Ksym_d(4)}{\rho_d(K_d(4))}.
\]
Thus ordinary Steenrod-kernel functionals and strongly spike-free ordinary modules do not by themselves detect the symmetric obstruction: every ordinary-kernel functional annihilates an ordinary hit. A nonzero relative obstruction must instead be represented by a symmetric Steenrod-kernel functional that does not extend from \(P(4)\).

The stabilizer-parity formula determines the effect of orbit summation on a local hit equation. If the source and target stabilizers are \(H\) and \(K\), respectively, and the Steenrod preimages are \(H\)-fixed, then the target orbit occurs with coefficient \([K:H\cap K]\) modulo \(2\). This formula accounts for all five exponent-multiplicity types in four variables. Hypothesis~\ref{hyp:relative-DE4} combines this equivariant condition with descent to a smaller block measure and vanishing of the residual relative class. Theorem~\ref{thm:intro-conditional} shows that this uniform local hypothesis implies the four-variable conjecture.

The exact rank calculations establish
\[
 \Crel_8(4)=\Crel_{12}(4)=\Crel_{14}(4)=\Crel_{22}(4)=0.
\]
Proposition~\ref{prop:relative-kameko} shows more generally that every ordinary and symmetric Kameko isomorphism induces an isomorphism on relative cohit spaces. Since \(d_{t+1}=2d_t+4\), \(d_0=22\), and \(\mu(d_{t+1})=4\), it follows that
\[
 \Crel_{26\cdot2^t-4}(4)=0
\]
for every \(t\geq0\). Equivalently, the restriction map from the ordinary Steenrod kernel to the symmetric Steenrod kernel is surjective in every degree of this tower.

Finally, the family
\[
 \lambda_t=
 \bigl(
 10\cdot2^t-1,
 10\cdot2^t-1,
 2^{t+2}-1,
 2^{t+1}-1
 \bigr),
 \qquad t\geq0,
\]
has stabilizer \(\Sigma_2\), satisfies \(\omega(\lambda_t)>_l\omega_{\min}(|\lambda_t|)\), and gives hit classes in both \(P(4)\) and \(B(4)\). The degree-\(22\) symmetric identity follows from the Lucas--Cartan enumeration, while the ordinary hit assertion follows from the Silverman--Singer criterion \cite[Theorem~14.1.2]{WalkerWoodI}. The Kameko isomorphisms propagate both assertions and the relative vanishing through the entire family. The unresolved part of the general conjecture is therefore reduced to the stabilizer-compatible local descent specified in Hypothesis~\ref{hyp:relative-DE4}.

\subsection*{Funding}

None.

\subsection*{Data Availability Statement}
Data sharing is not applicable to this article as no datasets were generated or analyzed during the current study.

\subsection*{Conflict of interest statement}
The author declares no conflict of interest.

\end{document}